\theoremstyle{definition}
\newtheorem{defi}{Definition}[section]
\theoremstyle{remark}
\newtheorem{remark}[defi]{Remark}
\theoremstyle{plain}
\newtheorem{thm}[defi]{Theorem}
\newtheorem{lemma}[defi]{Lemma}
\newtheorem{prop}[defi]{Proposition}
\newtheorem{defn}[defi]{Definition}
\newtheorem{assume}[defi]{Assumption}
\providecommand{\keywords}[1]{\textbf{\textit{Keywords:}} #1}
\newcommand\bra[1]{\left({#1}\right)}
\newcommand\pra[1]{\left[{#1}\right]}
\newcommand\abs[1]{\left\lvert#1\right\rvert}
\DeclareMathOperator*{\argmin}{arg\,min}
\DeclareMathOperator{\Id}{Id}
\DeclareMathOperator{\Jac}{Jac}
\DeclareMathOperator{\Tr}{tr}
\DeclareMathOperator{\PI}{PI}
\DeclareMathOperator{\TI}{TI}
\DeclareMathOperator{\LSI}{LSI}
\DeclareMathOperator{\TV}{TV}
\newcommand{\R}{\mathbb{R}}
\DeclareMathOperator{\av}{av}
\def\law{\mathop{\mathrm{law}}\nolimits}
\def\vep{\varepsilon}
\def\RelEnt{\mathcal H}
\def\Pr{\mathcal P}
\def\RF{\mathcal{R}}
\def\Wasser{\mathcal W}
\def\Haus{\mathcal{H}}
\def\E{\mathbb{E}}
\date{}
\title{Coarse-graining of non-reversible stochastic differential equations: quantitative results and connections to averaging}
\author{Carsten Hartmann, Lara Neureither, Upanshu Sharma}
\begin{document}
\maketitle
\begin{abstract}

 This work is concerned with model reduction of stochastic differential equations and builds on the idea of replacing drift and noise coefficients of preselected relevant, e.g.~slow variables by their conditional expectations. We extend recent results by Legoll \& Leli{\` e}vre [Nonlinearity {\bf 23}, 2131, 2010] and Duong {\em et al.}~[Nonlinearity {\bf 31}, 4517, 2018] on effective reversible dynamics by conditional expectations to the setting of general non-reversible processes with non-constant diffusion coefficient. 
We prove relative entropy and Wasserstein error estimates for the difference between the time marginals of the effective and original dynamics as well as an entropy error bound for the corresponding path space measures. A comparison with the averaging principle for systems with time-scale separation reveals that, unlike in the reversible setting, the effective dynamics for a non-reversible system need not agree with the averaged equations. We present a thorough comparison for the Ornstein-Uhlenbeck process and make a conjecture about necessary and sufficient conditions for when averaged and effective dynamics agree for nonlinear non-reversible processes. The theoretical results are illustrated with suitable numerical examples.  
\end{abstract}

\keywords{Coarse graining, Non-reversible diffusions, Conditional expectation, Effective dynamics, Optimal prediction, Relative Entropy, Wasserstein distance, Slow-fast systems, Averaging principle.}


\section{Introduction}
Modelling of complex systems by differential equations often leads to systems with large spatial dimension and vastly different time scales. Prominent examples are molecular dynamics \cite{AllenTildesley}, metabolic systems \cite{Kitano2001}, turbulent flows \cite{DNS} or climate systems \cite{HasselmannRevisited}. The high dimensionality and the multiscale nature of the models cause problems for the simulation over long times, and when solving control or data assimilation problems that require many forward simulations of the system under consideration. Model reduction techniques are a means to simplify the models so as to arrive at numerically feasible problems of lower dimensionality and higher regularity (since typically highly oscillatory terms are eliminated). 

In most situations, the quantities of interest (or:\ resolved variables) are the slow degrees of freedom that contain information about the long-term dynamics whereas the fast scales or high-order modes are often considered irrelevant for the long-term behaviour. Examples include conformational changes in biomolecules \cite{Schulten2010} taking place within a time scale of milliseconds or seconds whereas the fastest motion, the vibrations of the single atoms, happen within femtoseconds ($10^{-15}$ second). Also in climate models, \cite{Knutti2008} for which the interesting time scale are years, decades or even centuries, long-term processes like El-Ni\~no or slow trends like anthropogenic effects are coupled to fast-scale processes like the weather that changes within days or hours.   
In most cases the dynamics of the resolved or slow variables is coupled to the dynamics of the remaining ones, i.e.~the equations for the resolved variables are not closed. Coarse-graining or model reduction are umbrella terms for finding appropriate closure schemes to arrive at a closed system of equations of motion for the resolved variables only, also called the \emph{effective dynamics}. 

Model reduction and coarse-graining techniques for dynamical systems can be roughly divided into two categories:\ analytical or rational techniques that are (explicitly or implicitly) based on scale separation and the existence of suitable small parameters and empirical or data-driven techniques that are based on (e.g.~short) simulations of a full-scale system from which a reduced subspace and an effective dynamics on this subspace is constructed. The first category comprises averaging and homogenisation techniques for stochastic and deterministic ordinary and partial differential equations \cite{Papanicolaou1977,Bensoussan1978,Pavliotis2008}, but also control-theoretic approaches, such as balanced truncation \cite{Gugercin2004,Antoulas2005} or interpolation-based methods \cite{Flagg2012,Benner2012}. The second category includes techniques like Proper Orthogonal Decomposition \cite{Rowley2005}, Empirical Orthogonal Functions \cite{Monahan2009} or methods that are designed for nonstationary problems, such as Dynamic Mode Decomposition \cite{DMD} or Lagrangian Coherent Structures \cite{Haller2015}. In contrast to the first category, the second category methods do not allow for easy control of the approximation error in the resolved variables, but there are of course combinations of the aforementioned approaches, e.g.~semi-empirical methods that combine homogenisation with stochastic parametrisation of the reduced-order models \cite{MTV2001,HMM,Pokern2009,Kwasniok2012}. For an overview of various model reduction methods, see \cite{GivonKupfermanStuart04}. 

A related idea, partly inspired by statistical mechanics and termed ``optimal prediction'' by Chorin and co-workers, is to use best-approximations and close the equations by projecting them onto the resolved variables in a way that is optimal, e.g.\ in a weighted least squares sense \cite{Chorin2000,Chorin2003} or in relative entropy \cite{Turkington2016,Majda2018}.  The use of weighted least squares leads to conditional expectations as projection onto the space of the resolved variables and thus resembles the exact coarse-graining approach proposed by Gy\"ongy \cite{gyongy1986mimicking}. See also \cite{Barber2004,Bernstein2007,Hartmann2007,HijonEspanolEijndenDelgado10,
Seibold2004,ZhangHartmannSchutte16} for conditional expectation closures and their applications in molecular dynamics, reaction kinetics and turbulence modelling. 

\subsection{Key observations}\label{sec:keyobs}

This paper is concerned with the coarse-graining of non-reversible  stochastic differential equations (SDE) with non-degenerate diffusion. Non-reversible diffusions play an ever increasing role in statistical mechanics, with applications ranging from the variance reduction for Monte Carlo simulation \cite{HwangHwangSheu93,DuncanLelievrePavliotis16} to the modelling of non-equilibrium systems such as polymer chains in a flow~\cite{Ottinger12,LeBrisLelievre12} or stochastic modelling of turbulent flows under shear stress \cite{Litvinov2017,Oettinger2014}. 

\subsubsection*{Coarse graining of non-reversible diffusions}

To briefly describe the key problem addressed in this paper, consider the linear SDE 
\begin{equation}\label{slowfastSDE}
\begin{aligned}
dX^{\vep}_{t} & =  - (X^{\vep}_{t} - \alpha Y^{\vep}_{t})\,dt + \sqrt{2}\,dW^1_{t}\,,\quad X^{\vep}_{0}=x,\\
\vep dY^{\vep}_{t} & = -(Y^{\vep}_{t} + \alpha X^{\vep}_{t})\,dt + \sqrt{2\vep }\,dW^2_{t}\,,\quad Y^{\vep}_{0}=y,
\end{aligned}
\end{equation}
on $\mathcal{X}=\R\times\R$, where $\alpha\in\R$ is a parameter, and $W^1,W^2$ are independent components of a Brownian motion $W=(W^1,W^2)$. 
For $0<\vep\ll 1$, the second component is fast in that it makes $O(1)$ excursions within a time span of order $\vep$. The averaging principle (e.g.~\cite[Ch.~7, Thm.~2.1]{Freidlin1984}) states that, for $\vep\to 0$, the slow dynamics is essentially decoupled from the fast dynamics and can be represented by a system of the form
\begin{equation}\label{auxSDE1}
dX^{\vep}_{t} =  - (X^{\vep}_{t}\,dt - \alpha y^{x}_{t/\vep})\,dt + \sqrt{2}\,dW^1_{t}\,,\quad X^{\vep}_{0}=x,\\
\end{equation}
with $y^{x}$ being the solution of the auxiliary fast subsystem 
\begin{equation}\label{auxSDE2}
dy^{x}_{t} = -(y^{x}_{t}+ \alpha x)\,dt + \sqrt{2}\,dW^2_{t}\,,\quad y^{x}_{0}=y\,.
\end{equation}
for fixed $x$. Since $y_{t/\vep}$ converges to a Gaussian with mean $-\alpha x$ and unit covariance for any fixed $t$ as $\vep\to 0$, the averaging principle states that we can replace $y^{x}$ in (\ref{auxSDE1}) by its mean in the limit $\vep\to 0$ and obtain 
\begin{equation}\label{limitSDE}
dX_{t} =  - (1+\alpha^{2})\,X_{t}\,dt + \sqrt{2}\,dW^1_{t}\,,\quad X_{0}=x.\\
\end{equation}
The convergence $X^{\vep}\to X$ is pathwise and uniformly on $[0,T]$ for any $T>0$. The key ingredient to prove convergence is that the fast subsystem (\ref{auxSDE2}) is exponentially mixing with unique invariant measure for every fixed $x$ (here:\ a Gaussian measure on the real line). No assumption whatsoever regarding the invariant measure of the joint process $(X^{\vep},Y^{\vep})$ is needed. 

Now consider the joint process~\eqref{slowfastSDE}, which for every fixed $\vep>0$ has a unique Gaussian invariant measure $\mu^{\vep}=\mathcal{N}(0,K^{\vep})$, with $K^{\vep}$ being a symmetric positive definite $2\times 2$ matrix (see, e.g.~\cite[Sec.~2]{Neureither2019}). 
We seek an equation for $X^{\vep}$ only. Clearly, the first equation in~\eqref{slowfastSDE} depends on $Y^{\vep}$ for every $\vep>0$, but we may close the equation by assuming that $(X^{\vep},Y^{\vep})\sim\mu^{\vep}$ and replacing $Y^{\vep}$ by its conditional expectation given $X^{\vep}$, i.e.\ by replacing the right hand side of the first equation in~\eqref{slowfastSDE} by its best approximation in the space $L^{2}(\mathcal{X},\mu^{\vep})$ as a function of $X^{\vep}$. This closure can be justified by the observation that for any square-integrable random variable $G=G(X,Y)$, it holds that 
\begin{equation}\label{bestApprox}
\mathbf{E}(G|X) = \argmin_{Z\in L^{2}, Z=Z(X)}\mathbf{E}((G - Z)^{2})
\end{equation}
where $\mathbf{E}$ denotes the expectation with respect to $\mu^{\vep}$, and $Z=Z(X)$ means that $Z$ is measurable with respect to the $\sigma$-algebra generated by $X$. It follows by completing the square in the joint Gaussian density of $(X^{\vep},Y^{\vep})$ that the conditional mean of $Y^{\vep}$ given $X^{\vep}$ is given by 
\begin{equation}\label{YgivenX}
\mathbf{E}(Y^{\vep}|X^{\vep}) = K^{\vep}_{xy}/K^{\vep}_{xx} X^{\vep}\,,
\end{equation}
where $K^{\vep}_{xx}$ and $K_{xy}^{\vep}$ are the first diagonal and the off diagonal terms of the covariance matrix $K^{\vep}$. Replacing $Y^{\vep}$ in the first equation of~\eqref{slowfastSDE} by its conditional mean, we obtain an effective equation for $X^{\vep}$, namely  
\begin{equation}\label{effectiveSDE}
d\hat{X}_{t}^{\vep} =  - (1-\alpha K^{\vep}_{xy}/K^{\vep}_{xx})\hat{X}^{\vep}_{t}\,dt + \sqrt{2}\,dW^1_{t}\,.\quad \hat{X}^{\vep}_{0}=x\,
\end{equation}
which in general differs from (\ref{limitSDE}); for example for $\vep=1$, we have $K_{xy}^{\vep=1}=0$ and thus the conditional mean is 0.    As a consequence, the averaged dynamics and the effective dynamics that is obtained by conditional expectation closure are different in general. Note, however, that they agree in the limit $\vep\to 0$, since $\mathbf{E}(Y^{\vep}|X^{\vep}=x)\to-\alpha x$  as $\vep\to 0$.  While $\hat X_t^\vep$ approximates $X_t^\vep$ in the long-time limit $t\rightarrow\infty$ by construction, in this paper we will show that this is a good approximation even when $\vep\rightarrow 0$ (see Section~\ref{sec:MainRes}).

Despite the positive result that the averaging and the conditional expectation dynamics coincide in the limit $\vep\to 0$, it should be clear that there is a fundamental difference between the two methods in that they rely on different assumptions regarding the invariant measure:\ averaging assumes that the fast dynamics $Y^{\vep}$ has a unique invariant measure that is reached exponentially fast when $X^{\vep}=x$ is held fixed, whereas the idea of the conditional expectation relies on the existence of an invariant measure for the joint process $(X^{\vep},Y^{\vep})$. Typically, the exponential convergence condition is enforced by a combination of uniform ellipticity and dissipativity conditions for the fast dynamics (see e.g.~\cite[Sec.~2]{DiLiu2005}), whereas no such condition is needed for the joint process in the conditional expectation closure, unless one is interested in deriving sharp error bounds (in which case one needs even stronger assumptions for the joint process that hold uniformly in $\vep$). 
As the example above shows, the conditional probability and the invariant measure of the fast process need not be the same for finite $\vep$.

\subsubsection*{Reversible systems} 
We should mention an important special case, namely, when the dynamics is reversible, i.e.\ when the underlying transition density satisfies detailed balance. For the situation at hand this is the case if and only if $\alpha=0$ in~\eqref{slowfastSDE}, in which case the two equations decouple, and so $X^{\vep}_{t}=\hat{X}^{\vep}_{t}=X_{t}$ trivially holds almost surely for all $t\ge 0$ and all $\vep>0$. This is admittedly a trivial situation, but to demonstrate that it is the reversibility of the process that makes the difference, consider the following modification of~\eqref{slowfastSDE}:
\begin{equation}\label{slowfastSDEalt}
\begin{aligned}
dX^{\vep}_{t} & =  - (X^{\vep}_{t} - \alpha Y^{\vep}_{t})\,dt + \sqrt{2}\,dW^1_{t}\,,\quad X^{\vep}_{0}=x,\\
\vep dY^{\vep}_{t} & = -(Y^{\vep}_{t} - \alpha X^{\vep}_{t})\,dt + \sqrt{2\vep }\,dW^2_{t}\,,\quad Y^{\vep}_{0}=y\,,
\end{aligned}
\end{equation}
with $|\alpha|<1$. The process solving~\eqref{slowfastSDEalt} is reversible for all $\alpha\in\R$, since the drift is of gradient form: 
\begin{equation*}
\begin{pmatrix}
-x + \alpha y \\ -y + \alpha x  
\end{pmatrix}
= - \begin{pmatrix}
\partial V/\partial x\\ \partial V/\partial y
\end{pmatrix},
\end{equation*}
with potential
\begin{equation*}
V(x,y) = \frac{1}{2}\left(x^{2} - 2\alpha xy + y^{2}\right)\,.
\end{equation*}
Moreover the potential is bounded from below if $|\alpha|<1$, in which case the joint process has a unique invariant measure with density $\mu\propto\exp(-V)$, independent of $\vep$. As a consequence, the equations for $\hat{X}^{\vep}$ and $X$ agree for all $\vep>0$, since the conditional probability measure is the same as the invariant measure of the fast subsystem. Therefore both effective and averaged equation yield the same approximation of the law of $X^{\vep}$ on $[0,T]$. This is true for all gradient systems which is equivalent to saying that this is a feature of reversible systems~\cite[Prop.~4.5]{Pavliotis2014}. 

\subsection{Relevant previous works}

As indicated by the literature above and the references therein, the question of coarse-graining has received considerable attention. However the question of deriving quantitative estimates, especially in the absence of explicit scale separation, is a challenging one, and there are so far only few rigorous results available (see below). This is in stark contrast to the huge body of literature on the averaging principle for SDEs, for which various weak and strong error bounds under various different growth and regularity conditions on the SDE coefficients exist; see e.g.~\cite{Khasminskii1966,Khasminskii1968,Freidlin1984,DiLiu2005,Pavliotis2008,AbourashchiVeretennikov10} and the references given there.    

First attempts towards the derivation of error estimates for conditional expectation closures have been undertaken in \cite{Hald2001} and \cite{legoll2010effective}. While the work  \cite{Hald2001} by Hald \& Kupferman is based on traditional Gronwall-type estimates for Lipschitz continuous right hand side, the second work \cite{legoll2010effective} by Legoll \& Leli\`evre uses functional inequalities of logarithmic-Sobolev type to give semi-quantitative estimates for the differences in the finite-time marginals measured in relative entropy. The idea there, and also in this paper, is to exploit the fact that, while the resolved variables are slowly evolving or almost constant, the remaining degrees of freedom will reach their equilibrium distribution considerably faster, so that it is justifiable to replace these degrees of freedom by their equilibrium statistics, i.e.~by their conditional expectation (see also  \cite{Hartmann2007,ZhangHartmannSchutte16} for the relation between coarse-graining of reversible systems and thermodynamic free energy calculation). 

When the SDE under consideration is reversible in a wide sense, which includes overdamped Langevin dynamics with non-degenerate noise and underdamped Langevin dynamics with degenerate noise, quantitative error estimates comparing finite time marginals have been proved in \cite{legoll2010effective} for the overdamped Langevin equation and in \cite{duong2018quantification} for the underdamped equation under the assumption that the resolved variables are affine functions of the state variables. 
Stronger results that are reminiscent of the pathwise error estimates for averaging problems have been obtained in~\cite{legoll2017pathwise,LelievreZhang18} for stationary  overdamped Langevin systems assuming that the conditional invariant measure satisfies a  Poincar\'e inequality. For systems with scale separation, these results have been extended recently in~\cite{Pepin2018} to the non-stationary case, using a forward-backward martingale method. Even more recently, pathwise estimates for non-reversible systems without explicit scale separation 
have been proved in \cite{LegollLelievreSharma18}, however in a fairly restrictive setting when the resolved variable is an affine function of the original coordinates. 

\subsection{Main results, novelty and outline}\label{sec:MainRes}

In this article we extend the aforementioned results in two ways. First, given a resolved variable that is a (sufficiently regular) nonlinear function of the state variables, we prove error bounds for the finite time marginals in relative entropy and the Wasserstein-2 distance for general non-reversible dynamics with non-degenerate noise. Moreover we present an error estimate for the relative entropy between the path measure of the effective dynamics and the marginal path measure of the resolved variable (under the full dynamics). Although weaker than the pathwise estimates that have been obtained in \cite{LegollLelievreSharma18}, this will allow us to go beyond the restrictive affine setting. 
Second, we will discuss the sharpness of these bounds in the presence of explicit scale separation and, specifically, compare effective and averaged SDE dynamics. 

The pre-limit relative-entropy error bounds that we prove are of the form (see Theorem~\ref{NL-thm:RE},~\ref{NLT-thm:RE} and Proposition~\ref{NL-thm:REeps})
\begin{equation*}
\RelEnt(\hat\rho_t|\eta_t)\leq A \vep + B(\vep),
\end{equation*}
where $\RelEnt(\hat{\rho}_{t}|\eta_{t})$ denotes the relative entropy between the true finite time marginal of the resolved variable (under the original dynamics) with probability density $\hat{\rho}$ and the finite time marginal of the effective dynamics, with probability density $\eta$. Throughout this paper we will use the subscript as in $\rho_t$ to indicate that this is the time-slice of $\rho$ at time $t$. 
The above error bounds holds under the assumption that $\eta_0=\hat\rho_0$, where $A\in(0,\infty)$ is explicit in terms of Log-Sobolev and Talagrand constants of the  conditional density and the full initial data. The second term, $B(\vep)$ is uniformly bounded in $\vep$ under mild assumptions and vanishes if the diffusion coefficient is independent of the unresolved variables.

The Wasserstein-2 distance which although a weaker measure of error, is used to prove sharper error estimates of the form (see Theorem~\ref{NL-thm:Was},~\ref{NL-thm:Wass} and Proposition~\ref{prop-wass:eps}): 
\begin{equation*}
\Wasser_2(\hat\rho_t,\eta_t)\leq C e^{D(\vep) t}\sqrt{\vep}\,,
\end{equation*}     
where the $C,D(\vep)\in(0,\infty)$ and  can be explicitly expressed in terms of the system coefficients, the constants appearing in the Log-Sobolev and Talgrand inequalities of the fast subsystem, and the initial data (see Remark~\ref{behav-D}  for bounds on $D(\vep)$). 
However, the path-space error bound that we obtain is not sharp as $\vep\to 0$  (see Theorem~\ref{thm:patherror} and Proposition~\ref{prop:eps-path}):
 \begin{equation*}
\RelEnt(\hat{\rho}_{[0,t]}|\hat{\nu}_{[0,t]})\leq E \vep + F(\vep)t\,,
\end{equation*} 
where $\hat{\rho}_{[0,t]}$ and $\hat{\nu}_{[0,t]}$ denote the laws of the resolved process (under the full dynamics) and the effective dynamics on $C([0,t],\mathcal{X})$, and $E,F\in(0,\infty)$ can be explicitly computed, with $F$ having a finite, nonzero limit as $\vep\to 0$ and depend on how strongly the systematic drift of the resolved variable varies as a function of the unresolved variables. 

\subsubsection*{Novelty}
Our results generalise the existing literature in various directions. We present time-marginal error estimates starting with general non-reversible SDEs and nonlinear coarse-graining maps. Additionally we prove an error estimate for the law of paths, which is the first such result for effective dynamics, and is important for approximation of dynamical quantities such as mean first-passage times. 

While averaging is a well understood and popular technique in multiscale studies, so far its connection to coarse-graining and effective dynamics is not well understood. In this paper we explore these connections in detail. Using our error estimates, we present new results for averaging of reversible SDEs with diffusion coefficients which depend on the full state space. To the best of our knowledge, these are the first quantitative results in this general (diffusion coefficient) setting. We also present a detailed comparison of the averaging and the conditional expectation approach in the case of (non-reversible) Ornstein-Uhlenbeck processes and isolate sufficient conditions under which the two approaches agree. 

\subsubsection*{Organisation of the article}

In Section \ref{sec:QuantEst} we first introduce the problem setup and  prove various error bounds in relative entropy and Wasserstein distance for affine and general nonlinear coarse-graining maps. The results are then generalised to slow-fast systems with two time scales in Section \ref{sec:eps}, and these are detailed for reversible nonlinear and non-reversible linear diffusions in Sections \ref{sec:eps-Rev} and \ref{sec:eps-Lin}. The theoretical results are illustrated with a few numerical examples in Section \ref{sec:num}. The article concludes in Section~\ref{sec:conclusion} with further discussions. 
The article contains three appendices that record the proofs of the main theorems (Appendices~\ref{App-sec:Lin} and~\ref{App-sec:NL}), and 
some auxiliary results (Appendix \ref{app:proof:numex}).

\section{Quantitative estimates}\label{sec:QuantEst}
In this section we consider the SDE
\begin{equation}\label{eq:genSDE}
dZ_t = f(Z_t) dt + \sqrt{2}\sigma(Z_t) dW_t,  \quad Z|_{t=0}=Z_0,
\end{equation}
where $f:\R^n\rightarrow\R^n$, $\sigma:\R^n\rightarrow\R^{n\times  n}$ and $W_t$ is a standard $ n$-dimensional Brownian motion. 
The corresponding Fokker-Planck equation for $\rho_t=\law(Z_t)$ is 
\begin{equation}\label{eq:FP-Ref}
\partial_t\rho = -\nabla\cdot(f\rho) + \nabla^2: \gamma \rho, \quad \rho|_{t=0}=\rho_0, 
\end{equation}
with $\gamma = \sigma \sigma^T$, $\rho_0=\law(Z_0)$. Here $\nabla^2$ is the Hessian and $A:B=\mathrm{tr}(A^TB)$ is the Frobenius inner product for matrices. 
\begin{assume}\label{ass:Coeff-Stat}
Throughout this section,~\eqref{eq:genSDE} satisfies  
\begin{enumerate}[topsep=0pt]
\item (Regularity of coefficients) The coefficients $f,\sigma\in C^\infty$. Furthermore, the diffusion matrix $\gamma\in L^\infty(\R^n)$ is positive definite, i.e.\ $\gamma(z)\gamma^T(z)\geq c\Id_n$ for some $c>0$ independent of $z\in\R^d$, and therefore satisfies
\begin{equation}\label{def:min-lam}
0<\lambda_{\min}(\gamma):= \inf_{z\in \R^n} \lambda(\gamma(z)) <+\infty,
\end{equation}
where $\lambda(A)$ denotes the smallest eigenvalue of matrix $A$. 
\item (Invariant measure) The SDE~\eqref{eq:genSDE} admits a unique invariant measure\footnote{$\mathcal P(\mathcal X)$ is the space of probability measures on space $\mathcal X$.} $\mu\in \mathcal P(\R^n)$  which has a density with respect to the Lebesgue measure, which we also denote by $\mu$. 
\end{enumerate}
\end{assume}
Note that, by assuming the existence and regularity of the invariant measure we have implicitly assumed certain growth conditions on the coefficients of~\eqref{eq:genSDE}. We avoid these technical details here to simplify the presentation of the paper. Interested readers can see~\cite[Section 2.4, 3.2]{bogachev2015fokker} for a detailed discussion on the well-posedness and regularity of the invariant measure under fairly general conditions. 

In the introduction we discussed the idea of coarse-graining or model reduction in detail. In practice, coarse-graining is achieved by means of a so-called \emph{coarse-graining (CG) map} (also called  a reaction coordinate in molecular dynamics) 
\begin{equation*}
\xi:\R^n\rightarrow\mathcal M,
\end{equation*}
which maps the full state space $\R^n$ onto relevant lower-dimensional class of variables encoded in a manifold $\mathcal M$. In this article we will consider the case when $\mathcal M$ is a Euclidean space, and use the notation $z\in\R^n$ for the full-state variable and $x:=\xi(z)$ for the coarse-grained (or:\ resolved) variable.

In what follows we divide our results into two parts. In Section~\ref{sec:LinCG} we focus on the case of linear CG maps and prove the results comparing the projected dynamics to the effective dynamics. In Section~\ref{sec:NonLinCG} we consider the case of nonlinear CG maps. 
While the analysis is more involved in the nonlinear setting, the proofs follow on the lines of the linear case and we only point out the main differences. Although the linear setting is a special case of the nonlinear setting, we treat the linear case separately because classical averaging falls under this category, and therefore in the latter half of this paper we will use the explicit estimates available in the linear setting for further discussions.

\subsection{Estimates for linear CG maps}\label{sec:LinCG}

In this section we focus on the case of a coordinate projection as a CG map, i.e.\ 
\begin{equation}\label{def:CoordPro} 
\xi:\R^n\rightarrow\R^{n_x}, \ \xi(x,y)=x. 
\end{equation}
Here $x\in\R^{n_x}$ and $y\in\R^{n_y}$ with $n=n_x+n_y$. To this end, let us rewrite \eqref{eq:genSDE} using $Z_t=(X_t,Y_t) \in \R^{n_x \times n_y}$
\begin{align}\label{eq:genSDExy}
\begin{split}
dX_t&=f_1(X_t,Y_t) \, dt + \sqrt{2}\sigma_{11}(X_t,Y_t) \, dW^1_t + \sqrt{2}\sigma_{12}(X_t,Y_t) \, dW^2_t ,  \\
dY_t&=f_2(X_t,Y_t) \, dt + \sqrt{2}\sigma_{21}(X_t,Y_t) \, dW^1_t + \sqrt{2}\sigma_{22}(X_t,Y_t) \, dW^2_t , 
\end{split}
\end{align}
where $f_1:\R^n\rightarrow\R^{n_x}$, $f_2:\R^n\rightarrow\R^{n_y}$, $\sigma_{11}:\R^{n}\rightarrow \R^{n_x\times n_x}$, $\sigma_{12}:\R^{n}\rightarrow \R^{n_x\times n_y}$, $\sigma_{21}:\R^{n}\rightarrow \R^{n_y\times n_x}$, $\sigma_{22}:\R^{n}\rightarrow \R^{n_y\times n_y}$, and $W^1_t,\,W^2_t$ are independent standard Brownian motions in $\R^{n_x}$ and $\R^{n_y}$ respectively. 

\subsubsection{Preliminaries}
We now introduce certain preliminaries and technical tools that will be used throughout this article. 

Any $\zeta\in\mathcal P(\R^n)$ which is absolutely continuous with respect to the Lebesgue measure on $\R^n$, i.e.\ $d\zeta(z)=\zeta(z)\,dz$, with  density again denoted by $\zeta$ for convenience, can be decomposed into its marginal measure $\xi_\#\zeta=:\hat\zeta\in\mathcal P(\R^{n_x})$ satisfying $d\hat\zeta(x)=\hat\zeta(x) \, dx$ with density 
\begin{equation}\label{Lin-def:mar-meas}
\hat\zeta(x)=\int_{\R^{n_y}}\zeta(x,y)dy,
\end{equation}
and for any $x\in\R^{n_x}$ the family of conditional measures $\zeta(\cdot|x)=:\bar\zeta_x\in\mathcal P(\R^{n_x})$ satisfying $d\bar\zeta_x(y)=\bar\zeta_x(y)dy$ with density
\begin{equation}\label{Lin-def:cond-meas}
\bar\zeta_x(y)=\frac{\zeta(x,y)}{\hat\zeta(x)}.
\end{equation}
Differential operators on $\R^n$ will be denoted by $\nabla,\nabla\cdot,\nabla^2$, while the corresponding operators in $\R^{n_x}$ will be denoted by $\nabla_x,\nabla_x\cdot, \nabla^2_x$ (and similarly with subscript $y$ for corresponding operators on $\R^{n_y}$).

We now introduce some notation for norms of vectors and matrices. We use $|v|$ for the standard Euclidean norm of a vector $v$, and $|M|$ for the operator norm of a (possibly non-square) matrix $M$. For $A\in \R^{k\times k}$ and $v\in\R^k$ we set $|v|_A^2:=(v,Av)$. For a matrix $M\in \R^{k\times k}$, the Frobenius norm is 
\begin{equation*}
|M|_F^2:=\mathrm{tr}M^T M = \sum\limits_{i,j=1}^k |M_{ij}|^2.
\end{equation*}
For a three tensor $T\in\R^{k\times k\times k}$, we abuse notation and use $|T|_F$ to denote the tensor norm induced from the Frobenius norm for matrices, i.e.\ it is a mapping from $(\R^k,|\cdot|)$ to $(\R^{k\times k},|\cdot|_F)$. Precisely it can be written as
\begin{equation*}
|T|_F^2= \sum\limits_{i,j,\ell=1}^k |T_{ij\ell}|^2.
\end{equation*}

Next we introduce the relative entropy and the Wasserstein-2 distance. For two probability measures $\zeta, \nu \in \Pr(\mathcal X)$,  the relative entropy of $\zeta$ with respect to $\nu$ is defined as
\begin{equation*}
\RelEnt(\zeta|\nu) = \begin{dcases}
\int_{\mathcal X}  \log \left(\frac{d\zeta}{d\nu}\right)\, d\zeta, \quad &\text{if } \zeta\ll\nu, \\
\infty, &\text{otherwise}.
\end{dcases}
\end{equation*}
Relative entropy is not a metric since it is not symmetric and does not satisfy the triangle inequality. However it satisfies (see for instance~\cite[Section 9.4]{AmbrosioGigliSavare08})
\[\RelEnt(\zeta|\nu) \geq 0 \quad \text{ and } \quad \RelEnt(\zeta|\nu) = 0 \quad \Longleftrightarrow \quad \zeta = \nu, \ \zeta\text{-almost surely.} \]
Furthermore by the Czisz\`ar-Kullback-Pinsker inequality, it bounds the total variation norm $\|\cdot\|_{\TV}$ from above via
\begin{equation*} 
\|\zeta - \nu \|_{\TV} \leq \sqrt{2 \RelEnt(\zeta|\nu)}. 
\end{equation*}
For two probability measures $\zeta, \nu \in \Pr(\mathcal X)$ with bounded second moments, the Wasserstein-2 distance is 
\[ \Wasser_2(\zeta,\nu) = \inf\limits_{\theta \in \Theta(\zeta,\nu)} \left(\int_{\mathcal X \times \mathcal X} |z_1 - z_2|^2 \, d\theta(z_1,z_2)\right)^{1/2}, \]
where $\Theta(\zeta,\nu)$ denote the set of all couplings of $\zeta$ and $\nu$, i.e.\ for any Borel set $\mathcal B \subset \mathcal X$
\begin{equation*}
\int_{\mathcal B\times \mathcal X} d\Theta(z_1,z_2)=\zeta(\mathcal B) \text{ and } \int_{\mathcal X\times \mathcal B} d\Theta(z_1,z_2)=\nu(\mathcal B), 
\end{equation*}
The Wasserstein-2 distance is a metric on the space of probability measures with bounded second moments.

As mentioned in the introduction, in this paper we assume that the conditional invariant measure satisfies the Log-Sobolev and the Talagrand inequality, which we now define. 
\begin{defn}\label{def:LSI}
A probability measure $\nu\in\mathcal P(\mathcal X)$, where $\mathcal X\subset\R^n$ is a smooth submanifold,  satisfies
\begin{enumerate}[topsep=0pt]
\item the Log-Sobolev inequality with constant $\alpha_{\LSI}$ if 
\begin{align*}
\forall\zeta\in\mathcal P(\mathcal X) \text{ with }\zeta\ll\nu: \ \RelEnt(\zeta|\nu) \leq \frac{1}{2\alpha_{\LSI}} \RF(\zeta|\nu).
\end{align*}
For $\zeta,\nu\in\mathcal P(\mathcal X)$, the relative Fisher Information of $\zeta$ with respect to $\nu$ is defined as
\begin{align*}
 \RF(\zeta|\nu) := \begin{dcases}
 \int_{\mathcal X} \left|\nabla  \log \frac{d\zeta}{d\nu}\right|^2 \, d\zeta, \quad &\text{if } \zeta\ll\nu \text{ and } \nabla \log \left(\frac{d\zeta}{d\nu}\right)\in L^2(\mathcal X;\zeta),\\
\infty, &\text{otherwise}.
 \end{dcases}
\end{align*}
\item the Talagrand inequality with constant $\alpha_{\TI}$ if
  \begin{align*}
  \Wasser^2_2(\zeta,\nu) \leq \frac{2}{\alpha_{\TI}}\RelEnt(\zeta|\nu).
  \end{align*}
\end{enumerate}
\end{defn}
The notion of $\nabla$ in the definition of the Fisher Information depends on the manifold $\mathcal X$. For $\mathcal X=\R^n$ this is the usual gradient, while on the level-set of $\xi$, i.e.\ $\mathcal X=\R^{n_y}$, we use $\nabla_{y}$.  Furthermore the Log-Sobolev inequality implies the Talagrand inequality such that the constants satisfy $0\leq \alpha_{\LSI}\leq \alpha_{\TI}$ (see~\cite{otto2000generalization} for details). 

Finally we state the entropy-dissipation identity for the Fokker-Planck equation~\eqref{eq:FP-Ref}, which we rewrite as  
\begin{equation*}
\partial_t\rho_t = \nabla \cdot \bra{\rho_t\pra{f+\nabla\cdot \gamma+ \gamma\nabla\log\mu+\gamma\nabla\log\frac{\rho_t}{\mu}}},
\end{equation*}
where $\mu$ is the invariant measure, and therefore we find 
\begin{align*}
\frac{d}{dt}\RelEnt(\rho_t|\mu)&= \int_{\R^n} \bra{1+\log\frac{\rho_t}{\mu}} \nabla \cdot \bra{\rho_t\pra{f+\nabla\cdot \gamma + \gamma\nabla\log\mu+\gamma\nabla\log\frac{\rho_t}{\mu}}} \\
&= - \int_{\R^n} \nabla\bra{\log\frac{\rho_t}{\mu}} \gamma\nabla\bra{\log\frac{\rho_t}{\mu}} \rho_t  - \int_{\R^n} \nabla\bra{\log\frac{\rho_t}{\mu} } \rho_t \pra{f+\nabla\cdot \gamma + \gamma\nabla\log\mu}\\
&= -\int_{\R^n}\abs{\nabla\log\frac{\rho_t}{\mu}}^2_{\gamma}\rho_t - \int_{\R^n} \nabla\bra{\frac{\rho_t}{\mu} } \pra{f\mu+\nabla\cdot (\gamma\mu)} = -\int_{\R^n}\abs{\nabla\log\frac{\rho_t}{\mu}}^2_{\gamma}\rho_t.
\end{align*}
 Here we have used the notation $|v|_\gamma^2=(v,\gamma v)_{\R^n}$. To arrive at the final equality we first apply integration by parts and then use the fact that $\mu$ is the invariant measure. Integrating in time, we arrive at  the entropy-dissipation identity for any $t>0$
\begin{equation}\label{eq:EDI}
\RelEnt(\rho_t|\mu) + \int_0^t \RF_{\gamma}(\rho_s|\mu)ds = \RelEnt(\rho_0|\mu),
\end{equation}
where $\RF_{\gamma}(\cdot,\cdot)$ is the $\gamma$-weighted Fisher Information given by 
\begin{equation*}
\RF_{\gamma}(\zeta|\nu):=\int_{\mathcal X} \left|\nabla  \log \frac{d\zeta}{d\nu}\right|_{\gamma}^2 \, d\zeta.  
\end{equation*}  
These formal calculations which assume smoothness of the solution to~\eqref{eq:FP-Ref}, can be generalised to allow for weaker notions of solutions using approximation arguments as in~\cite{bogachev2016distances}.

\subsubsection{Projected and effective dynamics}
 Applying It\^{o}'s formula to~\eqref{eq:genSDExy}, along with $\nabla \xi=(1,0), \nabla^2\xi=0$ (recall~\eqref{def:CoordPro}) and $f=(f_1,f_2), \gamma =  \begin{pmatrix} \gamma_{11} & \gamma_{12} \\ \gamma_{21} & \gamma_{22} \end{pmatrix}$, we find
\begin{equation}\label{eq-xi-Dyn}
d\xi(Z_t)=f_1(Z_t) dt + \sqrt{2 \gamma_{11}(Z_t) } dB_t,
\end{equation}
where $B_t$ is the standard Brownian motion in $\R^{n_x}$ given by
\begin{align}\label{def:BM}
dB_t = \left[(\gamma_{11})^{-1/2} (\sigma_{11}, \sigma_{12})\right](Z_t)\cdot dW_t,
\end{align}
with $W_t=(W^1_t,W^2_t)$.
The projected dynamics (also called the coarse-grained dynamics in~\cite{duong2018quantification}) is the closure of $\xi(Z_t)$, which solves 
\begin{equation} \label{eq:gyongy}
d\hat X_t =  -\hat F(t, \hat X_t) dt + \sqrt{2 \hat \Gamma(t,\hat X_t)}dB_t, 
\end{equation}
with the corresponding Fokker-Planck equation for $\hat\rho_t:=\law(\hat X_t)\in \mathcal P(\R^{n_x})$ given by
\begin{align}\label{NL-eq:proj}
\partial_t\hat\rho = \nabla_x\cdot(\hat F \hat\rho) + \nabla^2_x:\hat \Gamma\hat\rho.
\end{align}
Here the coefficients $\hat F:[0,T]\times\R^{n_x}\rightarrow\R^{n_x}, \, \hat \Gamma:[0,T]\times \R^{n_x}\rightarrow \R^{n_x\times n_x}$ satisfy 
\begin{equation}\label{eq:NL-Proj-coeff}
\hat F(t,x) = \int_{\R^{n_y}}- f_1(x,y)\,d\bar\rho_{t,x}(y),\quad
\hat \Gamma(t,x)= \int_{\R^{n_y}}\gamma_{11}(x,y)\,d\bar\rho_{t,x}(y),
\end{equation}
where $\bar\rho_{t,x}$ is the conditional measure corresponding to $\rho_t$ (recall~\eqref{Lin-def:cond-meas}).  The coefficients $\hat F,\hat\Gamma$ are the orthogonal projections of the coefficients in~\eqref{eq-xi-Dyn} onto $L^2(\R^{n_y},\bar\rho_{t,x})$. Furthermore, a straightforward calculation (see for instance~\cite[Proposition 2.8]{duong2018quantification}) shows that $\hat\rho_t=\law(\xi(Z_t))$, i.e.\ $\xi(Z)$ which is the true-projected variable and $\hat X$ are exact in the time-marginal sense. Therefore we refer to $\hat X_t$ as the projected dynamics. 

Next, we define the effective dynamics as
\begin{equation*}
d\bar X_t = -F(\bar X_t) dt + \sqrt{2 \Gamma(\bar X_t)}dB_t, 
\end{equation*}
with the corresponding Fokker-Planck equation for $\eta:=\law(\bar X_t)\in\mathcal P(\R^{n_x})$ given by
\begin{align}\label{NL-eq:eff}
\partial_t\eta = \nabla_x\cdot(F \eta) + \nabla^2_x: \Gamma\eta\,.
\end{align}
The coefficients $F:\R^{n_x}\rightarrow\R^{n_x}, \,  \Gamma:\R^{n_x}\rightarrow \R^{n_x\times n_x}$ satisfy 
\begin{equation}\label{NL-eq:Eff-coeff}
F(x) = \int_{\R^{n_y}}-f_1(x,y)\,d\bar\mu_{x}(y),\quad
\Gamma(x)= \int_{\R^{n_y}}\gamma_{11}(x,y)\,d\bar\mu_{x}(y),
\end{equation}
where $\bar\mu_{x}$ is the conditional invariant measure. 

\begin{remark}
In this article we assume that the CG map and the coefficients of the original dynamics satisfy sufficient regularity and growth properties to ensure the well-posedness of the projected and the effective dynamics. Following the strategy in~\cite[Sec.\ 2.3,\,2.4]{duong2018quantification} and general well-posedness results in~\cite{bogachev2015fokker}, these details can be made precise in the setting of this paper. For instance, the Lipschitz continuity of the effective coefficients (and thereby the well-posedness of the effective dynamics) follows as in the proof of Lemma~\ref{lem:eff-coef-Lip} (with $\vep=1$). However, to keep the presentation simple we skip these details here.  
\end{remark}

\subsubsection{Relative entropy and Wasserstein estimates}
We now state the relative entropy estimate. 
\begin{thm}\label{NL-thm:RE}
In addition to Assumption~\ref{ass:Coeff-Stat},  assume that
\begin{enumerate}[topsep=0pt,label=({R}\arabic*)]
\item\label{NL-ass:relent-LSI} The conditional invariant measure $\bar\mu_x$ satisfies the Talagrand and the Log-Sobolev inequality uniformly in $x\in\R^{n_x}$ with constant $\alpha_{\TI}$ and $\alpha_{\LSI}$ respectively.
\item\label{NE-ass:relent-kappa}  The constant $\kappa_{\RelEnt}>0$ is such that 
\begin{align*}
\kappa_{\RelEnt}:=\sup\limits_{x\in\R^{n_x}} \sup\limits_{y,y'\in\R^{n_y}} \frac{|\mathcal F(x,y)-\mathcal F(x,y')|_{\Gamma^{-1/2}(x)}}{|y-y'|} <\infty, 
\end{align*}
where $\mathcal F:\R^{n}\rightarrow\R^{k}$ is defined as  
$\mathcal F:= f_1 - \nabla_x\cdot \gamma_{11}
- (\gamma_{11}-\Gamma)\nabla_x\log\mu$. 
\item\label{NL-ass:relent-lambda}  The constant $\lambda_{\RelEnt}>0$ is such that
\begin{equation*}
\lambda_{\RelEnt}:=\left\| \ \left|\Gamma^{-1/2}(\gamma_{11}- \Gamma) \right| \ \right\|_{L^\infty(\R^n)}<\infty.
\end{equation*}
\end{enumerate}
Then for any $t>0$ 
\begin{align}\label{Lin:RelEnt-Est-Noeps}
\RelEnt(\hat\rho_t|\eta_t)\leq \RelEnt(\hat\rho_0|\eta_0)  + \frac{2}{\lambda_{\min}(\gamma)}\bra{\lambda^2_{\RelEnt} + \frac{\kappa^2_{\RelEnt}}{\alpha_{\TI}\alpha_{\LSI}}}\pra{\RelEnt(\rho_0|\mu)-\RelEnt(\rho_t|\mu)}.
\end{align}
Here $\lambda_{\min}(\gamma)$ is defined in~\eqref{def:min-lam}, $\rho_t=\law(Z_t)$  where $Z_t$ solves~\eqref{eq:genSDE} and $\hat\rho_t=\law(\xi(Z_t))$.
\end{thm}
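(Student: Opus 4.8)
The plan is to estimate $\RelEnt(\hat\rho_t|\eta_t)$ by a Gronwall-free argument: differentiate the relative entropy in time, exploit that $\hat\rho_t$ and $\eta_t$ solve the two Fokker--Planck equations~\eqref{NL-eq:proj} and~\eqref{NL-eq:eff} with the same driving noise but different coefficients, and show that the resulting time derivative is controlled purely by the conditional relative Fisher information $\RF(\bar\rho_{t,x}|\bar\mu_x)$. Integrating in time and invoking the entropy--dissipation identity~\eqref{eq:EDI} then converts the accumulated Fisher information into the telescoping term $\RelEnt(\rho_0|\mu)-\RelEnt(\rho_t|\mu)$, which is exactly the right-hand side of~\eqref{Lin:RelEnt-Est-Noeps}.

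First I would compute $\tfrac{d}{dt}\RelEnt(\hat\rho_t|\eta_t) = \int (\partial_t\hat\rho_t)\log\tfrac{\hat\rho_t}{\eta_t} - \int\tfrac{\hat\rho_t}{\eta_t}\partial_t\eta_t$, substitute~\eqref{NL-eq:proj}--\eqref{NL-eq:eff}, and integrate by parts in $x$. The terms built from the \emph{effective} coefficients $F,\Gamma$ combine into a nonpositive $\Gamma$-weighted dissipation $-\int|\nabla_x\log\tfrac{\hat\rho_t}{\eta_t}|^2_{\Gamma}\,\hat\rho_t$, while the mismatch between $(\hat F,\hat\Gamma)$ and $(F,\Gamma)$ produces error terms. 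The key bookkeeping step is to reorganise the drift mismatch together with the It\^o correction $\nabla_x\cdot\gamma_{11}$ and the diffusion-fluctuation contribution $(\gamma_{11}-\Gamma)\nabla_x\log\mu$; this is precisely what makes the combination $\mathcal F = f_1 - \nabla_x\cdot\gamma_{11} - (\gamma_{11}-\Gamma)\nabla_x\log\mu$ from~\ref{NE-ass:relent-kappa} appear, so that the drift error is governed by $\int\mathcal F\,(d\bar\rho_{t,x}-d\bar\mu_x)$ and the diffusion error by the fluctuation $\gamma_{11}-\Gamma$ controlled in~\ref{NL-ass:relent-lambda}.

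The two error terms are then estimated by different routes, which explains the asymmetric constants. For the drift error, $\mathcal F(x,\cdot)$ is Lipschitz in $y$ with $\Gamma^{-1/2}$-weighted constant $\kappa_{\RelEnt}$, so by Kantorovich--Rubinstein duality $|\int\mathcal F\,(d\bar\rho_{t,x}-d\bar\mu_x)| \le \kappa_{\RelEnt}\,\Wasser_2(\bar\rho_{t,x},\bar\mu_x)$; the Talagrand inequality~\ref{NL-ass:relent-LSI} turns this into relative entropy and the Log-Sobolev inequality into $\RF(\bar\rho_{t,x}|\bar\mu_x)$, yielding the factor $\kappa^2_{\RelEnt}/(\alpha_{\TI}\alpha_{\LSI})$. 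For the diffusion error I would keep the $y$-integral unaveraged and pair the bounded quantity $\Gamma^{-1/2}(\gamma_{11}-\Gamma)$ directly against $\nabla_y\log\tfrac{\bar\rho_{t,x}}{\bar\mu_x}$; a Cauchy--Schwarz/Young step then gives $\lambda^2_{\RelEnt}\,\RF(\bar\rho_{t,x}|\bar\mu_x)$ without any functional-inequality constants, and the leftover $\nabla_x\log\tfrac{\hat\rho_t}{\eta_t}$ factors are absorbed into the nonpositive dissipation. Collecting terms bounds $\tfrac{d}{dt}\RelEnt(\hat\rho_t|\eta_t)$ by $2\bra{\lambda^2_{\RelEnt}+\kappa^2_{\RelEnt}/(\alpha_{\TI}\alpha_{\LSI})}\int_{\Rx}\RF(\bar\rho_{t,x}|\bar\mu_x)\,d\hat\rho_t(x)$.

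It remains to pass from the conditional to the full Fisher information. Using the factorisations $\rho_t=\hat\rho_t\,\bar\rho_{t,x}$ and $\mu=\hat\mu\,\bar\mu_x$ one has $\nabla_y\log\tfrac{\rho_t}{\mu}=\nabla_y\log\tfrac{\bar\rho_{t,x}}{\bar\mu_x}$, and together with $|\cdot|^2\le\lambda_{\min}(\gamma)^{-1}|\cdot|^2_{\gamma}$ this gives $\int_{\Rx}\RF(\bar\rho_{t,x}|\bar\mu_x)\,d\hat\rho_t \le \lambda_{\min}(\gamma)^{-1}\,\RF_{\gamma}(\rho_t|\mu)$. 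Integrating the differential inequality in time and substituting the entropy--dissipation identity~\eqref{eq:EDI} produces~\eqref{Lin:RelEnt-Est-Noeps}. I expect the main obstacle to be the entropy-derivative computation itself: with non-constant diffusion and without reversibility, the integration by parts generates several cross terms, and the delicate point is to verify that the diffusion-fluctuation term pairs cleanly with the $\nabla_y$-gradient so that it contributes only $\lambda^2_{\RelEnt}$ (rather than an extra $\alpha_{\LSI}^{-1}$ via Pinsker), and that all residual gradient factors are genuinely absorbable into the single available $\Gamma$-weighted dissipation.
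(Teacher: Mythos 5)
Your overall skeleton coincides with the paper's: the paper invokes the entropy comparison bound of Bogachev et al.\ for two Fokker--Planck equations (your time-differentiation argument is the formal derivation of that same bound, modulo regularity issues which the citation handles), then rewrites the coefficient mismatch $h_t$ so that the combination $\mathcal F$ appears, estimates the $\mathcal F$-difference exactly as you do (coupling, then Talagrand, then Log-Sobolev, giving $\kappa^2_{\RelEnt}/(\alpha_{\TI}\alpha_{\LSI})$ times the $\nabla_y$-part of the Fisher information), and concludes with $\lambda_{\min}(\gamma)$ and the entropy--dissipation identity~\eqref{eq:EDI}. The drift part of your argument is correct.

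The gap is in your treatment of the diffusion-fluctuation term, which you yourself flag as the delicate point. After the correct rearrangement (the paper's first step), the part of $h_t$ that is \emph{not} a difference of conditional averages of the fixed function $\mathcal F$ is
\begin{equation*}
\mathrm{II} \;=\; -\,\E_{\bar\rho_{t,x}}\pra{(\gamma_{11}-\Gamma)\,\nabla_x\log\bra{\tfrac{\rho_t}{\mu}}},
\end{equation*}
which contains no $y$-derivative at all: the only solution-dependent factor is $\nabla_x\log(\rho_t/\mu)$. There is therefore nothing to ``pair against $\nabla_y\log(\bar\rho_{t,x}/\bar\mu_x)$''. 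Nor can you manufacture a conditional Fisher information by a Wasserstein argument on $\hat\Gamma-\Gamma$: Assumption~\ref{NL-ass:relent-lambda} is only an $L^\infty$ bound on $\Gamma^{-1/2}(\gamma_{11}-\Gamma)$, not a Lipschitz-in-$y$ condition on $\gamma_{11}$ (that hypothesis belongs to the Wasserstein theorem~\ref{NL-thm:Was}, not to this one). The correct step, as in the paper, is Jensen's inequality, which gives $|\Gamma^{-1/2}\mathrm{II}|^2\le \lambda^2_{\RelEnt}\int|\nabla_x\log(\rho_t/\mu)|^2\,d\bar\rho_{t,x}$, i.e.\ the $x$-part of the \emph{full} relative Fisher information; only the $\kappa$-term is controlled by the conditional ($\nabla_y$) part. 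Consequently your intermediate claim, $\tfrac{d}{dt}\RelEnt(\hat\rho_t|\eta_t)\le 2\bra{\lambda^2_{\RelEnt}+\kappa^2_{\RelEnt}/(\alpha_{\TI}\alpha_{\LSI})}\int_{\Rx}\RF(\bar\rho_{t,x}|\bar\mu_x)\,d\hat\rho_t$, is strictly stronger than what the hypotheses permit. In the present theorem this is masked, because both the $x$- and $y$-parts are ultimately absorbed into $\lambda_{\min}(\gamma)^{-1}\RF_\gamma(\rho_t|\mu)$ and your final constant coincides with~\eqref{Lin:RelEnt-Est-Noeps}; but the intermediate statement cannot be right in general, as the scale-separated version shows: in Proposition~\ref{NL-thm:REeps} the $\nabla_y$-part of the dissipation carries a factor $1/\vep$, so anything bounded by the conditional Fisher information gains a factor $\vep$, whereas the $\lambda^\vep_{\RelEnt}$ contribution demonstrably does not (see Remark~\ref{rem:SlowDiff} and the discussion of non-sharpness in Section~\ref{sec:eps-Rev}). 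So the diffusion term must be routed through $\nabla_x\log(\rho_t/\mu)$, as the paper does, and your proposed absorption into the conditional Fisher information is a genuine missing step, not a bookkeeping detail.
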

For a proof of this result see Appendix~\ref{App-sec:Lin}. We now make some remarks about these  assumptions.

\begin{remark}[Functional inequalities] \label{rem:func-ineq}
Assumption~\ref{NL-ass:relent-LSI} is the central assumption made throughout this paper and we now discuss it in detail. Functional inequalities such as the Log-Sobolev inequality (which implies the Talagrand inequalty) have been extensively used to quantify convergence to equilibrium.  For instance, consider $d\mu(z) = Z^{-1} e^{-V(z)} dz$ with normalisation constant $Z$, which satisfies the Log-Sobolev inequality with constant $\alpha_{\LSI}$, and $\rho_t$ which solves the overdamped Langevin dynamics (which is ergodic with respect to $\mu$). Then it is well known that $\rho_t$ converges exponentially fast in time with rate $2\alpha_{\LSI}$ to $\mu$ (see~\cite{BakryGentilLedoux13}).

Here we assume that the conditional invariant measure $\bar\mu_x$ satisfies the Log-Sobolev inequality, which from the observation above implies that any dynamics on $\R^{n_y}$ (with non-degenerate diffusion) which is ergodic with respect to this measure converges exponentially fast to it. It must be stressed that since we do not know the dynamics of the conditional measure, no statement can be made about its convergence behaviour and therefore the Log-Sobolev assumption is purely technical. Consequently, while in certain simpler settings  (see for instance Proposition~\ref{rem:LinLSI} on linear diffusions), one can connect this assumption to the global dynamics, in general there is no clear interpretation. 

The invariant measure $\mu\in\mathcal P(\R^n)$  for~\eqref{eq:genSDExy} below can always be written in the form 
$d\mu(z) = \tilde{Z}^{-1}e^{-U(z)} \, dz$,
where $\tilde{Z}=\int_{\R^n}e^{-U(z)}dz$ is the normalisation constant with $z=(x,y)$ for some potential $U:\R^n\rightarrow\R$ (recall Assumption~\ref{ass:Coeff-Stat}). The corresponding conditional invariant measure is
\begin{equation}\label{def:cond-stat-U}
d\bar\mu_x(y) = \frac{1}{\hat{Z}(x)} e^{-U_x(y)} \, dy \,
\end{equation}
where $U_x(\cdot):=U(x,\cdot)$ for a fixed $x\in\R^{n_x}$. The normalisation constant $\hat{Z}(x)=\int_{\R^{n_y}}e^{-U_x(y)}dy$ is exactly the marginal invariant measure. By the Bakry-\'Emery criterion (cf. \cite{bakry1985diffusions}), the requirement that the conditional invariant measure~\eqref{def:cond-stat-U} satisfies the LSI inequality with constant $\alpha_{\LSI}$ follows if 
\begin{equation*} 
\nabla_y^2 U_x(y) \geq \alpha_{\LSI} \, \Id_{n_y }. 
\end{equation*}
uniformly in $x\in\R^{n_x}$. This convexity requirement can be weakened to allow for bounded perturbations using the Holley-Stroock result~\cite{holley1987logarithmic}. 
Furthermore, it is worth noting that 
\begin{align}\label{eq:U-low-bound}
\nabla^2 U(x,y)=  \begin{pmatrix} \nabla^2_x U_x(y) & \nabla_x\nabla_y^T U_x(y) \\ \nabla_y\nabla_x^T U_x(y) & \nabla^2_y U_x(y) \end{pmatrix} \geq \alpha \, \Id_{n} \quad \Longrightarrow \quad  \nabla^2_y U_x \geq \alpha \, \Id_{n_y},
\end{align}
and therefore the conditional invariant measure satisfies the LSI inequality if the full invariant measure satisfies the LSI inequality. It is indeed reasonable to expect that the full invariant measure satisfies a LSI inequality, since assuming the existence of an invariant measure requires that $e^{-U} \in L^1(\R^n)$ which follows if $U$ has sufficient growth at infinity, and thereby could  satisfy the Holley-Stroock result.

In the context of Theorem~\ref{NL-thm:RE}, the estimate~\eqref{Lin:RelEnt-Est-Noeps} is sharper when the constant $\alpha_{\LSI}$ is larger, which implies that a sharper estimate holds when we use a CG map such that the resulting conditional measure is unimodal and concentrated on its level sets. This gives an alternative characterisation of a `good' choice for a CG map. 
\end{remark}

\begin{remark} The constants $\kappa_{\RelEnt}$, $\lambda_{\RelEnt}$ are the non-reversible counterparts of the corresponding constants in~\cite[Theorem 3.1]{legoll2010effective} and~\cite[Theorem 2.15]{duong2018quantification}. The constant $\lambda_{\RelEnt}$ bounds the difference between the coarse-grained mobility $ \gamma_{11} = \nabla\xi\gamma\nabla\xi^T$ and its average $\Gamma$ with respect to the conditional invariant measure $\bar\mu_x$. Furthermore, when $\gamma_{11}$ is a constant matrix, $\lambda_\RelEnt=0$ and  
$\kappa_{\RelEnt}=\| |\nabla_{y} f_1| \|_{L^\infty(\R^n)}$, and therefore $\kappa_{\RelEnt}$ can be interpreted as the interaction between the projected dynamics (via its coefficients) and the level sets of $\xi$. Similar constants also appear in the trajectorial estimates comparing the projected and the effective dynamics~\cite[Theorem 2.2]{LegollLelievreSharma18}.  
\end{remark}

We now state the Wasserstein-2 estimate comparing the projected and the effective dynamics. 
\begin{thm}\label{NL-thm:Was}
In addition to Assumption~\ref{ass:Coeff-Stat}, assume that
\begin{enumerate}[topsep=0pt,label=({W}\arabic*)]
\item\label{NL-ass:Wasser-LSI} The conditional invariant measure $\bar\mu_x$ satisfies the Talagrand and the Log-Sobolev inequality uniformly in $x\in\R^{k}$ with constant $\alpha_{\TI}$ and $\alpha_{\LSI}$ respectively.
\item\label{NL-ass:Wasser-kappa}  The constant $\kappa_{\Wasser}>0$ is such that 
\begin{align}\label{eq:def-Linear-kappa_Wass}
\kappa_{\Wasser}:=\sup\limits_{x\in\R^{n_x}} \sup\limits_{y,y'\in\R^{n_y}} \frac{| f_1(x,y)- f_1(x,y')|}{|y-y'|} <\infty.
\end{align}
\item\label{NL-ass:Wasser-lambda}  The constant $\lambda_{\Wasser}>0$ is such that
\begin{align*}
\lambda_{\Wasser}:=\sup\limits_{x\in\R^{n_x}} \sup\limits_{y,y'\in\R^{n_y}} \frac{\abs{\sqrt{\gamma_{11}(x,y)}-\sqrt{\gamma_{11}(x,y')}}_F}{|y-y'|}
<\infty,
\end{align*}
where $|\cdot|_F$ is the Frobenius inner product for matrices. 
\end{enumerate}
Then for any $t\in [0,T]$ 
\begin{align*}
\Wasser^2_2(\hat\rho_t,\eta_t)\leq e^{C_{\Wasser}t}\bra{\Wasser^2_2(\hat\rho_0,\eta_0)+\frac{\lambda_{\Wasser}^2+\kappa^2_{\Wasser}}{\alpha_{\TI}\alpha_{\LSI}\lambda_{\min}(\gamma)}\pra{\RelEnt(\rho_0|\mu) - \RelEnt(\rho_t|\mu) } },
\end{align*}
Here $C_{\Wasser}:=1+\max\{ 2\| |\nabla_x \sqrt{\Gamma}|_F\|^2_{L^\infty(\R^n)},\| |\nabla_x F| \|_{L^\infty(\R^n)}\}$, $\lambda_{\min}(\gamma)$ is defined in~\eqref{def:min-lam}, $\rho_t=\law(Z_t)$  where $Z_t$ solves~\eqref{eq:genSDE}  and $\hat\rho_t=\law(\xi(Z_t))$.
\end{thm}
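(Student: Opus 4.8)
The plan is to bound $\Wasser_2(\hat\rho_t,\eta_t)$ by a synchronous coupling together with a Gronwall argument, and to close the resulting differential inequality using assumption~\ref{NL-ass:Wasser-LSI} and the entropy--dissipation identity~\eqref{eq:EDI}. Concretely, I would realize $\hat X$ solving~\eqref{eq:gyongy} and $\bar X$ solving the effective SDE on one probability space, driven by the \emph{same} Brownian motion $B_t$, with $(\hat X_0,\bar X_0)$ sampled from an optimal coupling of $\hat\rho_0,\eta_0$ so that $\E|\hat X_0-\bar X_0|^2=\Wasser_2^2(\hat\rho_0,\eta_0)$. Since the time-$t$ marginals remain $\hat\rho_t$ and $\eta_t$, the definition of $\Wasser_2$ gives $\Wasser_2^2(\hat\rho_t,\eta_t)\le\E|\hat X_t-\bar X_t|^2$, so it suffices to estimate $\E|e_t|^2$ with $e_t:=\hat X_t-\bar X_t$. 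By It\^o's formula the stochastic integral is a martingale and drops out in expectation, leaving
\begin{equation*}
\frac{d}{dt}\E|e_t|^2=\E\pra{2\,e_t\cdot\bra{-\hat F(t,\hat X_t)+F(\bar X_t)}}+2\,\E\,\big|\sqrt{\hat\Gamma(t,\hat X_t)}-\sqrt{\Gamma(\bar X_t)}\big|_F^2.
\end{equation*}

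Next I would split each coefficient difference into a \emph{conditional-measure} part, evaluated at the common point $\hat X_t$, and a \emph{Lipschitz-in-$x$} part. For the drift, $-\hat F(t,\hat X_t)+F(\bar X_t)=\bra{-\hat F(t,\hat X_t)+F(\hat X_t)}+\bra{F(\bar X_t)-F(\hat X_t)}$; recalling~\eqref{eq:NL-Proj-coeff} and~\eqref{NL-eq:Eff-coeff}, the first term equals $\int f_1(\hat X_t,\cdot)\,(d\bar\rho_{t,\hat X_t}-d\bar\mu_{\hat X_t})$, which, by testing against an optimal coupling of $\bar\rho_{t,x}$ and $\bar\mu_x$ and invoking~\ref{NL-ass:Wasser-kappa} with Cauchy--Schwarz, is bounded by $\kappa_{\Wasser}\,\Wasser_2(\bar\rho_{t,\hat X_t},\bar\mu_{\hat X_t})$, while the second is bounded by $\||\nabla_x F|\|_{L^\infty}|e_t|$. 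The diffusion difference is treated analogously via $\sqrt{\hat\Gamma(t,\hat X_t)}-\sqrt{\Gamma(\bar X_t)}=\bra{\sqrt{\hat\Gamma(t,\hat X_t)}-\sqrt{\Gamma(\hat X_t)}}+\bra{\sqrt{\Gamma(\hat X_t)}-\sqrt{\Gamma(\bar X_t)}}$, the second term being bounded by $\||\nabla_x\sqrt{\Gamma}|_F\|_{L^\infty}|e_t|$. Applying Young's inequality to the cross terms, the $|e_t|^2$-contributions collect into the prefactor $C_{\Wasser}$ and the remaining terms are controlled by $(\kappa_{\Wasser}^2+\lambda_{\Wasser}^2)\,\Wasser_2^2(\bar\rho_{t,\hat X_t},\bar\mu_{\hat X_t})$.

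Taking expectations turns the last quantity into $\int_{\R^{n_x}}\Wasser_2^2(\bar\rho_{t,x},\bar\mu_x)\,d\hat\rho_t(x)$, where the uniformity in $x$ of the functional inequalities~\ref{NL-ass:Wasser-LSI} is crucial: by Talagrand and then Log-Sobolev (Definition~\ref{def:LSI}) the integrand is at most $(\alpha_{\TI}\alpha_{\LSI})^{-1}\RF(\bar\rho_{t,x}|\bar\mu_x)$. Since $\nabla_y\log(\rho_t/\mu)=\nabla_y\log(\bar\rho_{t,x}/\bar\mu_x)$, the chain rule for Fisher information gives $\int\RF(\bar\rho_{t,x}|\bar\mu_x)\,d\hat\rho_t(x)\le\RF(\rho_t|\mu)\le\lambda_{\min}(\gamma)^{-1}\RF_\gamma(\rho_t|\mu)$, the last step using $|v|^2\le\lambda_{\min}(\gamma)^{-1}|v|_\gamma^2$ and~\eqref{def:min-lam}. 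Altogether $\tfrac{d}{dt}\E|e_t|^2\le C_{\Wasser}\,\E|e_t|^2+c\,(\kappa_{\Wasser}^2+\lambda_{\Wasser}^2)(\alpha_{\TI}\alpha_{\LSI}\lambda_{\min}(\gamma))^{-1}\RF_\gamma(\rho_t|\mu)$, and Gronwall's inequality, the bound $e^{C_{\Wasser}(t-s)}\le e^{C_{\Wasser}t}$, and $\int_0^t\RF_\gamma(\rho_s|\mu)\,ds=\RelEnt(\rho_0|\mu)-\RelEnt(\rho_t|\mu)$ from~\eqref{eq:EDI} then yield the claimed estimate.

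The main obstacle is the conditional-measure part of the \emph{diffusion} term, i.e.\ establishing $|\sqrt{\hat\Gamma(t,x)}-\sqrt{\Gamma(x)}|_F\lesssim\lambda_{\Wasser}\,\Wasser_2(\bar\rho_{t,x},\bar\mu_x)$. Unlike the drift, where $f_1$ enters the average linearly, here $\hat\Gamma$ and $\Gamma$ are averages of $\gamma_{11}=(\sqrt{\gamma_{11}})^2$, whereas~\ref{NL-ass:Wasser-lambda} controls only the increments of $\sqrt{\gamma_{11}}$. Viewing $\hat\Gamma,\Gamma$ as Gram matrices of the field $\sqrt{\gamma_{11}(x,\cdot)}$ against the coupling, the desired bound amounts to Lipschitz continuity of the matrix absolute-value map $T\mapsto(T^\ast T)^{1/2}$ in Frobenius norm, which holds only up to a universal constant (of Araki--Yamagami/Powers--St\o rmer type); absorbing this constant is what produces the harmless numerical factors in $C_{\Wasser}$ and in the final prefactor, and note in particular that $\lambda_{\min}(\gamma)$ arises \emph{not} from this step but solely from passing from the unweighted to the $\gamma$-weighted Fisher information. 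I expect this matrix square-root estimate, together with the integrability checks needed to discard the martingale and to run Gronwall, to be the only genuinely delicate points; the remainder mirrors the reversible bookkeeping of~\cite{duong2018quantification}.
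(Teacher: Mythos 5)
Your proposal is correct and follows the same skeleton as the paper's proof in Appendix~\ref{App-sec:Lin}: a synchronous coupling of the projected and effective dynamics (you realize it at the SDE level with a common Brownian motion and optimal initial coupling; the paper writes the corresponding coupled Fokker--Planck equation~\eqref{NLeq:Coup}, which is the law of exactly that coupling), the same splitting of each coefficient difference into a conditional-measure part evaluated at the common point and a Lipschitz-in-$x$ part absorbed into $C_{\Wasser}$, then Talagrand and Log-Sobolev applied to the conditional measures, disintegration using $\nabla_y\log(\rho_t/\mu)=\nabla_y\log\bra{\bar\rho_{t,x}/\bar\mu_x}$, the weight $\lambda_{\min}(\gamma)$, Gronwall, and the entropy--dissipation identity~\eqref{eq:EDI}.

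The one genuine difference is the step you rightly single out as delicate: bounding $\abs{\sqrt{\hat\Gamma(t,x)}-\sqrt{\Gamma(x)}}_F$ by a multiple of $\lambda_{\Wasser}\,\Wasser_2(\bar\rho_{t,x},\bar\mu_x)$ when assumption~\ref{NL-ass:Wasser-lambda} only controls increments of $\sqrt{\gamma_{11}}$, not of $\gamma_{11}$. You propose a Gram-matrix representation of $\hat\Gamma,\Gamma$ over a coupling $\Pi$ of $\bar\rho_{t,x}$ and $\bar\mu_x$, followed by the Araki--Yamagami/Powers--St{\o}rmer inequality, i.e.\ Lipschitz continuity of $T\mapsto (T^*T)^{1/2}$ in Frobenius norm with the sharp constant $\sqrt{2}$. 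This works (the operators involved are finite rank, so the Hilbert--Schmidt setting applies), but it costs the factor $(\sqrt 2)^2=2$ on the $\lambda_{\Wasser}^2$ term: as written, your argument proves the estimate with $2\lambda_{\Wasser}^2+\kappa_{\Wasser}^2$ in place of $\lambda_{\Wasser}^2+\kappa_{\Wasser}^2$. The paper avoids this loss differently: it invokes Lieb's concavity theorem (via~\cite[Theorem IX.6.1]{bhatia2013matrix}) to assert that $(A,B)\mapsto\abs{\sqrt{A}-\sqrt{B}}_F^2$ is \emph{jointly convex} on positive definite matrices, and then applies the two-component Jensen inequality with respect to $\Pi$, which yields
\begin{equation*}
\abs{\sqrt{\hat \Gamma(t,x)}-\sqrt{\Gamma(x)}}^2_F \leq \int_{\R^{2n_y}}\abs{\sqrt{\gamma_{11}(x,y_1)}-\sqrt{\gamma_{11}(x,y_2)}}_F^2\,d\Pi(y_1,y_2)\leq \lambda^2_\Wasser\,\Wasser^2_2(\bar\rho_{t,x},\bar\mu_x)
\end{equation*}
with constant exactly one. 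So the two key lemmas are interchangeable up to a universal factor: yours is perhaps more elementary to state, the paper's recovers the advertised prefactor without loss (and, to be fair, the paper's own Gronwall bookkeeping is loose by comparable factors of $2$, so the discrepancy is in the spirit of "harmless", though strictly your bound is weaker than the stated one). Your remaining observations are accurate: $\lambda_{\min}(\gamma)$ enters only in passing from the unweighted to the $\gamma$-weighted Fisher information, and the martingale/integrability issues you flag at the SDE level are precisely what the paper sidesteps by working directly with the coupled Fokker--Planck equation.
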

 For a proof of this result see Appendix~\ref{App-sec:Lin}. 
\begin{remark}The constants $\kappa_{\Wasser},\lambda_{\Wasser}$ depend on how strongly the coefficients in~\eqref{eq-xi-Dyn} vary along the unresolved variables on the level sets of the CG map. These constant are similar to their counterparts in pathwise estimates (see for instance~\cite[Theorem 2.2]{LegollLelievreSharma18}), which is to be expected since the proof of the Wasserstein estimate follows a synchronous-coupling argument which is also used to prove pathwise estimates. Furthermore, note that $\kappa_\Wasser,\lambda_\Wasser$ are automatically finite if $\nabla\sqrt{\gamma},\nabla f$ are bounded. 
\end{remark}

Theorem~\ref{NL-thm:RE},~\ref{NL-thm:Was} quantify the error on time-marginals of the projected and the effective equations, and therefore do not provide any correlation-in-time information. In the following section we present error estimates on path measures in relative entropy. 

\subsubsection{Error in path space} 
With $Z_t=(X_t,Y_t)\in \R^{n_x+n_y}$, we consider a special case of~\eqref{eq:genSDExy} with diagonal diffusion matrix  $\sigma=\left(\begin{smallmatrix} \sigma_1 & 0 \\ 0 & \sigma_2\end{smallmatrix}\right)$
\begin{align}\label{eq:PathSDE}
\begin{split}
dX_t&=f_1(X_t,Y_t) \, dt + \sqrt{2}\sigma_1(X_t) \, dW^1_t,  \\
dY_t&=f_2(X_t,Y_t) \, dt + \sqrt{2}\sigma_2(X_t,Y_t) \, dW^2_t,
\end{split}
\end{align}
where we use the same notation as in~\eqref{eq:genSDExy}. Note that $\sigma_1$ is a function of $X_t$ only. We write $\gamma=\left(\begin{smallmatrix} \gamma_1 & 0 \\ 0 & \gamma_2\end{smallmatrix}\right)$ with $\gamma_1=\sigma_1\sigma_1^T \geq \lambda_{\min}(\gamma_1)>0$ and $\gamma_2=\sigma_2\sigma_2^T\geq \lambda_{\min}(\gamma_2) >0$. In the following we use $\rho = \law((X_t,Y_t)_{0\leq t\leq T})$ and $\nu=\law((\bar X_t,Y_t)_{0\leq t\leq T})$ (see~\eqref{eq:effPathSDE} below).

For a fixed $T>0$, our aim is to compare the law of paths $\hat\rho:=\law((X_t)_{0\leq t\leq T})\in \mathcal P(C([0,T];\R^{n_x}))$ with $\hat\nu:=\law((\bar X_t)_{0\leq t\leq T})\in \mathcal P(C([0,T];\R^{n_x}))$, where the effective dynamics $\bar X_t$ solves
\begin{equation*}
d\bar X_t =  F (\bar X_t) \, dt + \sqrt{2} \sigma_1(\bar X_t) \, dW^1_t,  \ \text{ with } \  F(x)=\int_{\R^{n_y}} f_1(x,y) \, d\bar\mu_x(y).
\end{equation*}
It is important to note that, as opposed to the last section, here we compare the effective dynamics directly with $X$ and not with the projected dynamics $\hat X$ (defined in~\eqref{eq:gyongy}) since the two do not have the same law of paths, i.e.\ $\law((X_t)_{0\leq t\leq T})\neq \law((\hat X_t)_{0\leq t\leq T})$. 
We now state the estimate that bounds the error in relative entropy of the path measures $\hat \rho$ and $\hat \nu$. Note that $\hat\rho$ and $\hat\nu$ are the marginals of $\rho=\law((X_t,Y_t)_{0\leq t\leq T})$ and  $\nu=\law((\bar X_t,Y_t)_{0\leq t\leq T})$ under the map $\xi$. 
\begin{thm}\label{thm:patherror} In addition to Assumption~\ref{ass:Coeff-Stat}  assume that
\begin{enumerate}[label=({P}\arabic*),topsep=0pt]
\item \label{ass:path:gamma1} The diffusion coefficient of the slow variable $X_t$ is independent of the fast variable $Y_t$, i.e.\ $\gamma_1 = \gamma_1(x)$.
\item \label{ass:path:lip}The first component of the drift $f_1$ is Lipschitz in $y$, with constant $\kappa_{\Wasser}$, uniformly in $x$ (recall~\eqref{eq:def-Linear-kappa_Wass}). Furthermore,
$\bar \rho_{t,x}$ has finite second moments uniformly in $x \in \R^{n_x},\, t \in [0,T]$, i.e., 
\begin{equation*}
\sup\limits_{t\in[0,T]}\sup\limits_{x\in\R^{n_x}}\int_{\R^{n_y}} |y|^2 d\bar \rho_{t,x}(y)<\infty.
\end{equation*}
This implies that 
\begin{equation*}
\mathrm{Var}(f_1) := \sup\limits_{t\in[0,T]}\sup\limits_{x\in\R^{n_x}} \int \left(f_1(x,y) - \int f_1(x,y') \, d\bar \rho_{t,x}(y') \right)^2 \, d\bar \rho_{t,x}(y) < \infty.
\end{equation*}
\item \label{ass:path:novikov} $f_1 - F $ satisfies the Novikov's condition, i.e.\
$\E[\exp(\int_0^T |f_1-F|^2_{\gamma_1^{-1}}ds)]<\infty$.
\end{enumerate} Then 
\begin{equation*}
\RelEnt(\hat\rho| \hat \nu ) \leq  \RelEnt(\rho_0|\nu_0) + \ T\frac{\mathrm{Var}(f_1)}{2\lambda_{\min}(\gamma_1)} +  \frac{\kappa^2_{\Wasser}}{4 \alpha_{\TI}\alpha_{\LSI} \lambda_{\min}(\gamma_1) \lambda_{\min}(\gamma_2)}\RelEnt(\rho_0|\mu),
\end{equation*}
where $\lambda_{\min}(\gamma_1)$  is defined as in~\eqref{def:min-lam}.
\end{thm}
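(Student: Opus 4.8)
The plan is to lift the comparison from the slow path space to the full joint path space and then apply Girsanov's theorem, crucially exploiting Assumption~\ref{ass:path:gamma1} that the slow diffusion depends on $x$ only. To this end I would introduce the auxiliary joint process $(\bar X_t,Y_t)$ whose law on $C([0,T];\R^n)$ is $\nu$, generated by letting the slow component follow the effective dynamics while the fast component retains its original form but is driven by $\bar X$:
\begin{align*}
d\bar X_t &= F(\bar X_t)\,dt + \sqrt 2\,\sigma_1(\bar X_t)\,dW^1_t,\\
dY_t &= f_2(\bar X_t,Y_t)\,dt + \sqrt 2\,\sigma_2(\bar X_t,Y_t)\,dW^2_t.
\end{align*}
Writing $\pi$ for the projection of a joint path onto its slow component, we have $\pi_\#\rho=\hat\rho$ and $\pi_\#\nu=\hat\nu$, so the data-processing inequality for relative entropy yields $\RelEnt(\hat\rho|\hat\nu)\le\RelEnt(\rho|\nu)$, and it suffices to estimate the joint path entropy. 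Both $\rho$ and $\nu$ carry the \emph{same} state-dependent diffusion $\mathrm{diag}(\gamma_1(x),\gamma_2(x,y))$ — this is exactly where~\ref{ass:path:gamma1} is indispensable, since otherwise the slow diffusion of the effective process would be an $x$-average differing from $\sigma_1(x,y)$ and the two path measures would be mutually singular. Their drifts differ only in the slow block, by $f_1(x,y)-F(x)$, so Girsanov's theorem (legitimate under the Novikov condition~\ref{ass:path:novikov}) gives
\begin{equation*}
\RelEnt(\rho|\nu)=\RelEnt(\rho_0|\nu_0)+\tfrac14\int_0^T\E_\rho\pra{\abs{f_1(X_t,Y_t)-F(X_t)}^2_{\gamma_1^{-1}(X_t)}}\,dt.
\end{equation*}

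Next I would disintegrate the expectation at each time $t$ against $\rho_t$, writing $\E_\rho[\,\cdot\,]=\int_{\R^{n_x}}\int_{\R^{n_y}}\abs{f_1-F}^2_{\gamma_1^{-1}}\,d\bar\rho_{t,x}(y)\,d\hat\rho_t(x)$, and bound $\gamma_1^{-1}\le\lambda_{\min}(\gamma_1)^{-1}\Id$. The key algebraic step is the bias–variance splitting $f_1(x,y)-F(x)=\bra{f_1(x,y)-\langle f_1\rangle_{t,x}}+\bra{\langle f_1\rangle_{t,x}-F(x)}$, where $\langle f_1\rangle_{t,x}:=\int f_1(x,y')\,d\bar\rho_{t,x}(y')$. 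Since $\langle f_1\rangle_{t,x}$ is precisely the $\bar\rho_{t,x}$-mean, the cross term integrates to zero, leaving the fluctuation part, bounded pointwise by $\mathrm{Var}(f_1)$ of Assumption~\ref{ass:path:lip}, and the bias part $\abs{\langle f_1\rangle_{t,x}-F(x)}^2$.

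For the bias part I would use that $F(x)=\int f_1(x,\cdot)\,d\bar\mu_x$, so this is the difference of the means of the $\kappa_{\Wasser}$-Lipschitz-in-$y$ map $f_1(x,\cdot)$ under $\bar\rho_{t,x}$ and $\bar\mu_x$; Kantorovich duality and the uniform Talagrand inequality (Definition~\ref{def:LSI}) give $\abs{\langle f_1\rangle_{t,x}-F(x)}^2\le\kappa_{\Wasser}^2\Wasser_2^2(\bar\rho_{t,x},\bar\mu_x)\le \tfrac{2\kappa_{\Wasser}^2}{\alpha_{\TI}}\RelEnt(\bar\rho_{t,x}|\bar\mu_x)$, and the uniform Log-Sobolev inequality converts the relative entropy into conditional Fisher information with constant $\kappa_{\Wasser}^2/(\alpha_{\TI}\alpha_{\LSI})$. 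Integrating over $x$ and using the tensorisation identity $\nabla_y\log(\rho_t/\mu)=\nabla_y\log(\bar\rho_{t,x}/\bar\mu_x)$ identifies $\int\RF(\bar\rho_{t,x}|\bar\mu_x)\,d\hat\rho_t=\int_{\R^n}\abs{\nabla_y\log(\rho_t/\mu)}^2\,d\rho_t\le\lambda_{\min}(\gamma_2)^{-1}\RF_\gamma(\rho_t|\mu)$, the last bound coming from the block-diagonal structure of $\gamma$. Integrating in $t$ and invoking the entropy-dissipation identity~\eqref{eq:EDI} then yields $\int_0^T\RF_\gamma(\rho_t|\mu)\,dt\le\RelEnt(\rho_0|\mu)$. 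Collecting the fluctuation contribution (which gives $\tfrac{T\,\mathrm{Var}(f_1)}{4\lambda_{\min}(\gamma_1)}$, a fortiori bounded by the stated $\tfrac{T\,\mathrm{Var}(f_1)}{2\lambda_{\min}(\gamma_1)}$) and the bias contribution produces the claimed estimate.

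The main obstacle is conceptual rather than computational: it lies in choosing the auxiliary joint reference measure $\nu$ so that simultaneously (i) its slow marginal path law is exactly $\hat\nu$, and (ii) the drift discrepancy is confined to the slow block while the diffusion coefficients coincide as functions of the state, which is the only way Girsanov produces a finite relative entropy. This is feasible precisely because $\gamma_1$ is independent of $y$; abandoning~\ref{ass:path:gamma1} would force an averaged slow diffusion and render the path measures singular, so the path-space estimate genuinely cannot cover the non-constant-slow-diffusion case by this route. The remaining ingredients — the exact bias–variance orthogonality and the reduction of the conditional Fisher information to the full $\gamma$-weighted one — are routine. The unavoidable cost is the non-vanishing $T\,\mathrm{Var}(f_1)$ term, reflecting that fluctuations of $f_1$ transverse to the level sets of $\xi$ accumulate in time, which is why this bound is not sharp as the time horizon grows.
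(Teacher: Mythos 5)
Your proposal is correct and takes essentially the same route as the paper's own proof: the same auxiliary joint process $(\bar X_t,Y_t)$ of~\eqref{eq:effPathSDE}, the data-processing inequality $\RelEnt(\hat\rho|\hat\nu)\le\RelEnt(\rho|\nu)$, Girsanov's theorem with the drift discrepancy confined to the slow block (enabled by~\ref{ass:path:gamma1}), the add-and-subtract of $\hat F$ (your bias--variance split, whose cross term indeed vanishes exactly), and the Lipschitz--Talagrand--Log-Sobolev--entropy-dissipation chain for the bias term. The only difference is bookkeeping of the Girsanov prefactor: your $\tfrac14$ is the one consistent with the $\sqrt{2}\sigma$ convention and reproduces the stated constant $\kappa^2_{\Wasser}/\bigl(4\alpha_{\TI}\alpha_{\LSI}\lambda_{\min}(\gamma_1)\lambda_{\min}(\gamma_2)\bigr)$ exactly, whereas the paper's displayed computation carries $\tfrac12$.
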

\begin{proof}
Instead of directly working with $\hat\rho$ and $\hat\nu$, we will compare~\eqref{eq:PathSDE} with an auxiliary system
\begin{align}\label{eq:effPathSDE}
\begin{split}
d\bar X_t&= F(\bar X_t) \, dt + \sqrt{2}\sigma_1(\bar X_t) \, dW^1_t,  \\
dY_t&=f_2(\bar X_t,Y_t) \, dt + \sqrt{2}\sigma_2(\bar X_t,Y_t) \, dW^2_t.
\end{split}
\end{align}

Using assumption~\ref{ass:path:lip} we find 
\begin{align*}
\int_{\R^{n_y}} &\bra{ \int_{\R^{n_y}} f_1(x,y') - f_1(x,y) \, d\bar\rho_{t,x}(y) }^2 \, d\bar\rho_{t,x}(y') 
 \leq \int_{\R^{n_y}} \int_{\R^{n_y}} ( f_1(x,y') - f_1(x,y))^2 \, d\bar\rho_{t,x}(y)  \, d\bar\rho_{t,x}(y') \\
 &\qquad\leq \kappa^2_{\Wasser} \int_{\R^{n_y}} \int_{\R^{n_y}} (y-y')^2 \, d\bar\rho_{t,x}(y)  \, d\bar\rho_{t,x}(y')  
\leq     2 \kappa^2_{\Wasser} \int_{\R^{n_y}} |y|^2 d\bar\rho_{t,x}(y) < \infty,
\end{align*}
and therefore $ \mathrm{Var}(f_1) <\infty$. 
Using Girsanov's theorem  together with $f(z) - \bar f(z) = \left(\begin{smallmatrix}f_1(z) - F(z) \\ 0 \end{smallmatrix} \right)$, we have \label{eq:path-ent-exact}
\begin{align*}
\RelEnt(\rho|\nu) &= \E_{\rho}\Bigl[{ \log} \frac{d\rho}{d\nu}\Bigr] 
= \RelEnt(\rho_0|\nu_0) + \E_{\rho}\Bigl[  \int_0^T \bra{f(Z_t)-\bar f(Z_t)}\sigma dW_t +\frac{1}{2}\int_0^T |f(Z_t)-\bar f(Z_t)|^2_{\gamma^{-1}}dt\Bigr] \\
&=\RelEnt(\rho_0|\nu_0) + \frac12\E_{\rho}\Bigl[\int_0^T |f(Z_t)-\bar f(Z_t)|^2_{\gamma^{-1}}dt\Bigr] 
=\RelEnt(\rho_0|\nu_0) + \frac12\int_0^T\E_{\rho_t}\Bigl[  |f_1(Z_t)-F(Z_t)|^2_{\gamma_1^{-1}}\Bigr]dt\\
&\leq \RelEnt(\rho_0|\nu_0) +\frac{1}{2}\int_0^T \bra{\E_{\rho_t}\pra{ |f_1(Z_t)-\hat F(Z_t)|^2_{\gamma_1^{-1}}}+\E_{\rho_t}\pra{ |\hat F(Z_t)-F(Z_t)|^2_{\gamma_1^{-1}}}} dt,
\end{align*}
where the last inequality follows by adding subtracting $\hat F$ (defined in~\eqref{eq:NL-Proj-coeff}). 
The first term in right hand side can be estimated using
\begin{align*}
\E_{\rho_t}\pra{ |f_1(Z_t)-\hat F(Z_t)|^2_{\gamma_1^{-1}}} \leq \frac{1}{\lambda_{\min}(\gamma_1)} \E_{\hat\rho_t} \pra{\E_{ \bar\rho_{t,x}} |f_1(Z_t)-\hat F(Z_t)|^2 } 
\leq \frac{\mathrm{Var}(f_1)}{\lambda_{\min}(\gamma_1)}.
\end{align*}
Controlling the second term in the right hand side as in the proof of Theorem~\ref{NL-thm:Was}, we arrive at 
\begin{equation}\label{eq:PathEst}
\RelEnt(\rho|  \nu ) \leq \RelEnt(\rho_0|\nu_0) + \ T\frac{\mathrm{Var}(f_1)}{2\lambda_{\min}(\gamma_1)} +  \frac{\kappa^2_{\Wasser}}{4 \alpha_{\TI}\alpha_{\LSI} \lambda_{\min}(\gamma_1) \lambda_{\min}(\gamma_2)} ( \RelEnt(\rho_0|\mu) - \RelEnt(\rho_{ T}|\mu)) \,.
\end{equation} 
 Using $\RelEnt(\hat \rho| \hat \nu) \leq \RelEnt(\rho|\nu)$ and $ 0\leq \RelEnt(\rho_{ T}|\mu)\leq \RelEnt(\rho_0|\mu)$ we arrive at the final result
\begin{align*}
\RelEnt(\hat\rho| \hat \nu ) \leq  \RelEnt(\rho_0|\nu_0) + \ T\frac{\mathrm{Var}(f_1)}{2\lambda_{\min}(\gamma_1)} +  \frac{\kappa^2_{\Wasser}}{4 \alpha_{\TI}\alpha_{\LSI} \lambda_{\min}(\gamma_1) \lambda_{\min}(\gamma_2)}\RelEnt(\rho_0|\mu). 
\end{align*}
\end{proof}

\subsection{Analogous estimates for nonlinear CG maps}\label{sec:NonLinCG}
In the previous section we focussed on the case of coordinate projection as the CG map, which considerably simplifies the notation and the results since the level sets in that case are lower-dimensional Euclidean spaces. In this section we will focus on nonlinear CG maps, in particular we generalise the results to the following setting. 
\begin{assume}\label{ass:CG}
Throughout this section we assume that
\begin{enumerate}[topsep=0pt]
\item (Regularity) $\xi\in C^\infty(\R^n;\R^k)$ with $\nabla\xi$ having full rank $k$. 
\item (Bounds) There exists constants $C_1,C_2$ such that $\nabla\xi\nabla\xi^T\geq C_1\Id_k$ and $|\nabla\xi|\leq C_2$. 
\end{enumerate}
\end{assume}

\subsubsection{Preliminaries}

We now briefly discuss a few notations that will be used in this setting. As in the previous section we use $z\in\R^n$ for the full-state variable and $x:=\xi(z)\in\R^k$ for the coarse-grained (or:\ resolved) variable. For any $x\in\R^k$ we use 
\begin{equation*}
\Sigma_x:=\{z\in\R^n:\xi(z)=x\},
\end{equation*}
for the level set of $\xi$. For any such $\Sigma_x$ there exists a canonical intrinsic metric $d_{\Sigma_x}$, which for any $z_1,z_2\in \Sigma_x$ satisfies 
\begin{align*}
d_{\Sigma_x}(z_1,z_2):=\inf\left\{\int_0^1| \dot g |ds: \ g\in C^1([0,1],\Sigma_x),\ g(0)=z_1, \ g(1)=z_2 \right\}.
\end{align*}
We use $\nabla\xi\in\R^{k\times n}$ for the Jacobian, $G:=\nabla\xi\nabla\xi^T\in\R^{k\times k}$ for the metric tensor and $\Jac\xi:=\sqrt{\det G}$ for the Jacobian determinant of $\xi$. The $\Jac\xi$ is uniformly bounded away from zero due to the assumptions on $\xi$. 

Since we are working with a nonlinear $\xi$, the formulation for the marginal and conditional probability measures takes the level sets into account. For any $\zeta\in\mathcal P(\R^n)$ which is absolutely continuous with respect to the Lebesgue measure on $\R^n$, i.e.\ $d\zeta(z)=\zeta(z)dz$, with  density again denoted by $\zeta$ for convenience, can be decomposed into its marginal measure $\xi_\#\zeta=:\hat\zeta\in\mathcal P(\R^k)$ satisfying $d\hat\zeta(x)=\hat\zeta(x)dx$ with density 
\begin{equation}\label{NL-def:mar-meas}
\hat\zeta(x)=\int_{\Sigma_x}\zeta(z)\frac{\Haus^{n-k}(dz)}{\Jac\xi(z)},
\end{equation}
and for any $x\in\R^k$ the family of conditional measures $\zeta(\cdot|\Sigma_x)=:\bar\zeta_x\in\mathcal P(\Sigma_x)$ satisfying $d\zeta_x(z)=\bar\zeta_x(z)d\Haus^{n-k}(z)$ with density
\begin{equation}\label{NL-def:cond-meas}
\bar\zeta_x(z)=\frac{\zeta(z)}{\hat\zeta(x)\Jac\xi(z)}.
\end{equation}
Here $\Haus^{n-k}$ is the $(n-k)$-dimensional Hausdorff measure which is defined on $\Sigma_x$. 

As before, differential operators on $\R^n$ will be denoted by $\nabla,\nabla\cdot,\nabla^2$, while the corresponding operators in $\R^{k}$ will be denoted by $\nabla_x,\nabla_x\cdot, \nabla^2_x$. We define the surface gradient on $\Sigma_x$ as
\begin{equation*}
\nabla_{\Sigma_x}:=(\Id_n-\nabla\xi^TG^{-1}\nabla\xi)\nabla.
\end{equation*}
As in the last section, in this section we will assume that the conditional invariant measure satisfies the Log-Sobolev and the Talagrand inequalities defined in Definition~\ref{def:LSI}, with the crucial difference that $\mathcal X=\Sigma_x$ and therefore the gradient in the Fisher Information is the surface gradient $\nabla_{\Sigma_x}$.

For any $\phi:\R^n\rightarrow\R$, we define $\phi^\xi:\R^k\rightarrow\R$ as
\begin{equation*}
\phi^\xi(x):=\int_{\Sigma_x}\phi\frac{d\Haus^{d-k}}{\Jac\xi}.
\end{equation*}
Similarly for any matrix-valued function $B:\R^n\rightarrow\R^{k\times k}$ we can define $B^\xi$ component-wise as above. For any $\R^n$-valued random variable $X$ with $\law(X)=\phi(z)dz$, we have $\law(\xi(X))=\phi^\xi(x)dx$. 
The derivatives of these objects will play a crucial role in the following, and are given by (see~\cite[Lemma 2.4]{duong2018quantification} for proof)
\begin{align}\label{res:level-set-der}
\nabla_x\phi^\xi(x)=\int_{\Sigma_x}\nabla\cdot (\phi G^{-1}\nabla\xi)\frac{d\Haus^{d-k}}{\Jac\xi}, \ \
\nabla_x\cdot B^\xi(x)=\int_{\Sigma_x}\nabla\cdot (B G^{-1}\nabla\xi)\frac{d\Haus^{d-k}}{\Jac\xi}.
\end{align}
 
\subsubsection{Error estimates}
Since we are no longer in the linear setting, $\nabla^2\xi\neq 0$ and therefore the coefficients of the projected and the effective drifts differ from the previous section. Using It\^{o}'s formula we find
\begin{equation*}
d\xi(Z_t)=(\nabla\xi f+\gamma:\nabla^2\xi)(Z_t) dt + \sqrt{2 (\nabla\xi\gamma\nabla\xi^T)(Z_t) } dB_t,
\end{equation*}
where the Brownian motion $B_t$ in $\R^{k}$ is defined by
\[
dB_t = \pra{(\nabla\xi \gamma\nabla\xi^T)^{-1/2} \nabla\xi\sigma }(Z_t)\cdot dW_t.  
\]
The corresponding projected and effective dynamics are given in~\eqref{NL-eq:proj} and~\eqref{NL-eq:eff}, with the corresponding coefficients given by
\begin{align*}
\hat F(t,x) &= \int_{\Sigma_x} \bra{-\nabla\xi f-\gamma:\nabla^2\xi}(z)\,d\bar\rho_{t,x}(z),\quad \hat \Gamma(t,x) = \int_{\Sigma_x} \nabla \xi\gamma\nabla\xi^T(z)\,d\bar\rho_{t,x}(z),\\ 
F(x) &= \int_{\Sigma_x} \bra{-\nabla\xi f-\gamma:\nabla^2\xi}(z)\,d\bar\mu_{x}(z), \quad \Gamma(t,x) = \int_{\Sigma_x} \nabla \xi\gamma\nabla\xi^T(z)\,d\bar\mu_{x}(z). 
\end{align*}
We now state the relative entropy estimate. 
\begin{thm}\label{NLT-thm:RE}
In addition to Assumptions~\ref{ass:Coeff-Stat},\ref{ass:CG}, assume that
\begin{enumerate}[topsep=0pt,label=({RN}\arabic*)]
\item\label{NLT-ass:relent-LSI} The conditional invariant measure $\bar\mu_x$ satisfies the Talagrand and the Log-Sobolev inequality uniformly in $x\in\R^{k}$ with constant $\alpha_{\TI}$ and $\alpha_{\LSI}$ respectively.
\item\label{NET-ass:relent-kappa}  The constant $\kappa_{\RelEnt}>0$ is such that 
\begin{align*}
\kappa_{\RelEnt}:=\sup\limits_{x\in\R^{k}} \sup\limits_{z,z'\in\Sigma_x} \frac{|\mathcal F(z)-\mathcal F(z')|_{\Gamma^{-1/2}(x)}}{d_{\Sigma_x}(z,z')} <\infty, 
\end{align*}
where $\mathcal F:\R^{n}\rightarrow\R^{k}$ is defined as  
\[\mathcal F:=\nabla\xi f + \gamma:\nabla^2\xi - G^{-1}\nabla\xi\nabla\cdot (\nabla\xi\gamma\nabla\xi^T)
- (\nabla\xi\gamma\nabla\xi^T-\Gamma)\pra{\nabla\cdot(G^{-1}\nabla\xi)+G^{-1}\nabla\xi\nabla\log\mu},
\]
and $d_{\Sigma_x}$ is the intrinsic metric defined on the level set $\Sigma_x$. 
\item\label{NLT-ass:relent-lambda}  The constant $\lambda_{\RelEnt}>0$ is such that
\begin{equation*}
\lambda_{\RelEnt}:=\left\| \ \left|\Gamma^{-1/2}(\nabla\xi\gamma\nabla\xi^T- \Gamma)(\nabla\xi\nabla\xi^T)^{-1/2} \right| \ \right\|_{L^\infty(\R^n)}<\infty.
\end{equation*}
\end{enumerate}
Then for any $t>0$ 
\begin{equation*}
\RelEnt(\hat\rho_t|\eta_t)\leq \RelEnt(\hat\rho_0|\eta_0)  + \frac{1}{\lambda_{\min}(\gamma)}\bra{\lambda^2_{\RelEnt} + \frac{\kappa^2_{\RelEnt}}{\alpha_{\TI}\alpha_{\LSI}}}\pra{\RelEnt(\rho_0|\mu)-\RelEnt(\rho_t|\mu)},
\end{equation*}
where $\lambda_{\min}(\gamma)$ is defined in~\eqref{def:min-lam}.
\end{thm}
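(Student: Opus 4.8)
The plan is to mirror the proof of the linear case (Theorem~\ref{NL-thm:RE}): I would differentiate $\RelEnt(\hat\rho_t|\eta_t)$ in time, extract a non-positive dissipation term together with error terms governed by the difference of the projected and effective coefficients, and then integrate in time and invoke the entropy--dissipation identity~\eqref{eq:EDI}. Writing $r_t:=\hat\rho_t/\eta_t$ and using that $\hat\rho$ and $\eta$ solve the reduced Fokker--Planck equations~\eqref{NL-eq:proj},~\eqref{NL-eq:eff} with the same second-order structure but coefficients $(\hat F,\hat\Gamma)$ and $(F,\Gamma)$, substituting both equations, integrating by parts, and using $\nabla_x r_t=r_t\nabla_x\log r_t$ should yield
\[
\frac{d}{dt}\RelEnt(\hat\rho_t|\eta_t)
= -\int_{\R^k}\abs{\nabla_x\log r_t}_{\hat\Gamma}^2\,d\hat\rho_t
-\int_{\R^k}\nabla_x\log r_t\cdot\pra{(\hat F-F)+\nabla_x\cdot(\hat\Gamma-\Gamma)+(\hat\Gamma-\Gamma)\nabla_x\log\eta_t}\,d\hat\rho_t.
\]
The first term is the good dissipation, to be spent through Young's inequality to absorb half of each error term; the whole estimate then reduces to controlling the coefficient differences $\hat F-F$ and $\hat\Gamma-\Gamma$, which are exactly differences of conditional averages over the level set $\Sigma_x$ against $\bar\rho_{t,x}$ versus $\bar\mu_x$.

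Two tools specific to the nonlinear geometry carry the argument. The first is a co-area disintegration of the full dissipation: since on $\Sigma_x$ one has $\log(\rho_t/\mu)=\log(\hat\rho_t/\hat\mu)\circ\xi+\log(\bar\rho_{t,x}/\bar\mu_x)$ and the first summand is constant along the level set, its surface gradient $\nabla_{\Sigma_x}$ vanishes, so the tangential part of $\nabla\log(\rho_t/\mu)$ equals $\nabla_{\Sigma_x}\log(\bar\rho_{t,x}/\bar\mu_x)$. Combined with $\gamma\geq\lambda_{\min}(\gamma)\Id_n$ and the decomposition~\eqref{NL-def:cond-meas}, this gives $\int_{\R^k}\RF_{\Sigma_x}(\bar\rho_{t,x}|\bar\mu_x)\,d\hat\rho_t(x)\leq \lambda_{\min}(\gamma)^{-1}\RF_\gamma(\rho_t|\mu)$, which is where $\lambda_{\min}(\gamma)$ enters. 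The second tool controls the drift difference: writing $\hat F-F$ (and more precisely the recombined effective function $\mathcal F$ of~\ref{NET-ass:relent-kappa}) as $\int\mathcal F\,(d\bar\rho_{t,x}-d\bar\mu_x)$, Kantorovich--Rubinstein duality with the intrinsic metric $d_{\Sigma_x}$ and Lipschitz constant $\kappa_{\RelEnt}$ bounds this by $\kappa_{\RelEnt}\,\Wasser_2(\bar\rho_{t,x},\bar\mu_x)$; Talagrand and then Log--Sobolev (Assumption~\ref{NLT-ass:relent-LSI}) turn this into $\kappa_{\RelEnt}^2(\alpha_{\TI}\alpha_{\LSI})^{-1}\RF_{\Sigma_x}(\bar\rho_{t,x}|\bar\mu_x)$. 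Integrating in $x$ against $\hat\rho_t$ and using the disintegration bound produces precisely the $\tfrac{\kappa_{\RelEnt}^2}{\alpha_{\TI}\alpha_{\LSI}\lambda_{\min}(\gamma)}$ contribution.

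The main obstacle is the diffusion-coefficient difference, i.e.\ the genuinely second-order term $\int\nabla_x^2\log r_t:(\hat\Gamma-\Gamma)\,d\hat\rho_t$, which after integration by parts leaves the delicate piece $\int\nabla_x\log r_t\cdot(\hat\Gamma-\Gamma)\nabla_x\log\eta_t\,d\hat\rho_t$ involving $\nabla_x\log\eta_t$ — a quantity not controlled by the marginal dissipation and reflecting the non-reversibility (it is absent when $\nabla\xi\gamma\nabla\xi^T$ is constant, where $\lambda_{\RelEnt}=0$). Bounding it through the pre-averaged $\hat\Gamma-\Gamma$ is lossy; instead I would re-express it by rewriting $\nabla_x\log\eta_t$ via $\nabla_x\log\hat\rho_t-\nabla_x\log r_t$ (the $\nabla_x\log r_t$ part recombining with the good term into $-\int|\nabla_x\log r_t|^2_\Gamma d\hat\rho_t$) and then applying the level-set derivative identities~\eqref{res:level-set-der} to $\nabla_x\hat\rho_t$, so that the \emph{un-averaged} mobility fluctuation $\nabla\xi\gamma\nabla\xi^T-\Gamma$ pairs directly with $\nabla\log(\rho_t/\mu)$ on $\Sigma_x$, i.e.\ with the conditional Fisher information. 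This direct pairing, estimated by Cauchy--Schwarz and Young, yields the clean $\tfrac{\lambda_{\RelEnt}^2}{\lambda_{\min}(\gamma)}$ factor with \emph{no} functional-inequality constants, and it is exactly this manipulation that forces the $\nabla\log\mu$ correction inside $\mathcal F$ and the weighted norm $\Gamma^{-1/2}(\,\cdot\,)(\nabla\xi\nabla\xi^T)^{-1/2}$ in the definition of $\lambda_{\RelEnt}$.

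Collecting the two contributions gives $\tfrac{d}{dt}\RelEnt(\hat\rho_t|\eta_t)\leq \lambda_{\min}(\gamma)^{-1}\big(\lambda_{\RelEnt}^2+\kappa_{\RelEnt}^2(\alpha_{\TI}\alpha_{\LSI})^{-1}\big)\RF_\gamma(\rho_t|\mu)$; integrating on $[0,t]$ and using $\int_0^t\RF_\gamma(\rho_s|\mu)\,ds=\RelEnt(\rho_0|\mu)-\RelEnt(\rho_t|\mu)$ from~\eqref{eq:EDI}, together with the initial term $\RelEnt(\hat\rho_0|\eta_0)$, yields the claim. Beyond the obstacle above, the remaining work is bookkeeping of the nonlinear setting — Hausdorff measures, the Jacobian $\Jac\xi$, the surface gradient $\nabla_{\Sigma_x}$, and the intrinsic metric $d_{\Sigma_x}$ — which follows the linear template but requires~\eqref{res:level-set-der} and Assumption~\ref{ass:CG} to keep all level-set integrals and weights uniformly controlled.
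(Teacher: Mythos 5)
Your proposal is correct and follows essentially the same route as the paper's proof: rewrite the mismatch of projected and effective coefficients via the level-set identities~\eqref{res:level-set-der} so that the drift fluctuation $\mathcal F$ and the mobility fluctuation $\nabla\xi\gamma\nabla\xi^T-\Gamma$ pair with $\nabla\log(\rho_t/\mu)$ on $\Sigma_x$, bound the former through the coupling--Talagrand--Log-Sobolev chain (giving $\kappa_{\RelEnt}^2/(\alpha_{\TI}\alpha_{\LSI})$ times the tangential Fisher information) and the latter by Cauchy--Schwarz (giving $\lambda_{\RelEnt}^2$ times the normal component), recombine tangential and normal parts into the full gradient before invoking $\gamma\geq\lambda_{\min}(\gamma)\Id_n$, and close with the entropy--dissipation identity~\eqref{eq:EDI}. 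The only real difference is the starting point: where you differentiate $\RelEnt(\hat\rho_t|\eta_t)$ and integrate by parts formally (a step that needs regularity justification), the paper invokes \cite[Theorem 1.1]{bogachev2016distances} to obtain the integrated inequality with $h_t=(F+\nabla_x\cdot\Gamma)-(\hat F+\nabla_x\cdot\hat\Gamma)+(\Gamma-\hat\Gamma)\nabla_x\log\hat\rho_t$ directly --- the same object your substitution $\nabla_x\log\eta_t=\nabla_x\log\hat\rho_t-\nabla_x\log r_t$ reproduces.
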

For a proof of this result see  Appendix~\ref{App-sec:NL}. The object $\mathcal F$ is connected to the notions of free-energy and mean-force, which we discuss in the following remark.    
\begin{remark}[Connections to free energy] 
The object $\Gamma^{-1}\mathcal F$ is the counterpart of the local-mean force encountered in free-energy calculations~\cite{CiccottiLelievreVDE08}. To see this, let us consider the overdamped Langevin setting with $f=-\nabla V$, $\gamma=\Id_n$, $\mu=Z^{-1}e^{-V}$. Then we find
\begin{align*}
 \mathcal F
&= -\nabla\xi\nabla V+ \Delta\xi - \nabla\cdot(GG^{-1}\nabla\xi) - GG^{-1}\nabla\xi\nabla\log\mu- \Gamma\pra{G^{-1}\nabla\xi\nabla V - \nabla\cdot (G^{-1}\nabla\xi)}\\
&= - \Gamma\pra{G^{-1}\nabla\xi\nabla V - \nabla\cdot (G^{-1}\nabla\xi)},
\end{align*}
and therefore $\mathcal F = -\Gamma\mathcal N$ where $\mathcal N$ is the local mean force i.e.\  it satisfies $-\nabla_x\log\hat\mu(x)=\int_{\Sigma_x}\mathcal Nd\hat\mu_x$  (see~\cite[Eq. (2.19),(2.20)]{duong2018quantification} for the proof of this final equality).
\end{remark}

We now state the Wasserstein-2 estimate in the nonlinear setting. 
\begin{thm}\label{NL-thm:Wass}
In addition to Assumptions~\ref{ass:Coeff-Stat},\ref{ass:CG}, assume that
\begin{enumerate}[topsep=0pt,label=({WN}\arabic*)]
\item
The conditional invariant measure $\bar\mu_x$ satisfies the Talagrand and the Log-Sobolev inequality uniformly in $x\in\R^{k}$ with constant $\alpha_{\TI}$ and $\alpha_{\LSI}$ respectively.
\item
 The constant $\kappa_{\Wasser}>0$ is such that 
\begin{align*}
\kappa_{\Wasser}:=\sup\limits_{x\in\R^k}\sup\limits_{z,z'\in \Sigma_x} \frac{|U(z)-U(z'))|}{d_{\Sigma_x}(z,z')} <\infty,
\end{align*}
where $U(z):=(-\nabla\xi f-\gamma:\nabla^2\xi)(z)$. 
\item
 The constant $\lambda_{\Wasser}>0$ is such that
\begin{equation*}
\lambda_{\Wasser}:=\sup\limits_{x\in\R^k}\sup\limits_{z,z'\in \Sigma_x} \frac{\abs{\sqrt{\nabla\xi\gamma\nabla\xi^T(z)}-\sqrt{\nabla\xi\gamma\nabla\xi^T(z')}}_F}{d_{\Sigma_x}(z,z')}
<\infty,
\end{equation*}
where $|\cdot|_F$ is the Frobenius norm for matrices. 
\end{enumerate}
Then for any $t\in [0,T]$ 
\begin{align*}
\Wasser^2_2(\hat\rho_t,\eta_t)\leq e^{C_{\Wasser}t}\bra{\Wasser^2_2(\hat\rho_0,\eta_0)+\frac{\lambda_{\Wasser}^2+\kappa^2_{\Wasser}}{\alpha_{\TI}\alpha_{\LSI}\lambda_{\min}(\gamma)}\pra{\RelEnt(\rho_0|\mu) - \RelEnt(\rho_t|\mu) } }
\end{align*}
where $C_{\Wasser}:=1+\max\{ 2\| |\nabla_x \sqrt{\Gamma}|_F \|^2_{L^\infty(\R^n)},\| |\nabla_x F| \|_{L^\infty(\R^n)}\}$ and $\lambda_{\min}(\gamma)$ is defined in~\eqref{def:min-lam}.  
\end{thm}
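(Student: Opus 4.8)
The plan is to run the synchronous-coupling argument used for the linear Wasserstein bound (Theorem~\ref{NL-thm:Was}), replacing the Euclidean fibre geometry by the intrinsic geometry of the level sets $\Sigma_x$. First I would realise both time-marginals on one probability space: let $\hat X_t$ solve the projected SDE (drift $-\hat F$, diffusion $\sqrt{2\hat\Gamma}$) and $\bar X_t$ the effective SDE (drift $-F$, diffusion $\sqrt{2\Gamma}$), both driven by the \emph{same} Brownian motion $B_t$, with $(\hat X_0,\bar X_0)$ an optimal coupling of $\hat\rho_0$ and $\eta_0$. Since $\hat\rho_t=\law(\xi(Z_t))=\law(\hat X_t)$ and $\eta_t=\law(\bar X_t)$, the pair $(\hat X_t,\bar X_t)$ is an admissible coupling, so $\Wasser_2^2(\hat\rho_t,\eta_t)\le\E|\hat X_t-\bar X_t|^2$ for every $t$. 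Writing $e_t:=\hat X_t-\bar X_t$, It\^o's formula yields
\begin{align*}
\frac{d}{dt}\E|e_t|^2 = 2\,\E\,\skp{e_t}{-\hat F(t,\hat X_t)+F(\bar X_t)} + 2\,\E\,\abs{\sqrt{\hat\Gamma(t,\hat X_t)}-\sqrt{\Gamma(\bar X_t)}}_F^2 ,
\end{align*}
and the whole proof reduces to estimating the two terms on the right.

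Next I would split each term by adding and subtracting the effective coefficients evaluated at $\hat X_t$. For the drift, $-\hat F(t,\hat X_t)+F(\bar X_t)=\bra{F(\hat X_t)-\hat F(t,\hat X_t)}+\bra{F(\bar X_t)-F(\hat X_t)}$; the second bracket is controlled by the Lipschitz constant of $F$ in $x$ and, after Young's inequality, feeds the factor $\||\nabla_x F|\|_{L^\infty}$ (and the additive $1$) in $C_{\Wasser}$. The first bracket equals $\int_{\Sigma_{\hat X_t}} U\,(d\bar\mu_{\hat X_t}-d\bar\rho_{t,\hat X_t})$ with $U=-\nabla\xi f-\gamma:\nabla^2\xi$; since $U$ is $\kappa_{\Wasser}$-Lipschitz along $\Sigma_x$ in the intrinsic metric, Kantorovich--Rubinstein duality followed by the Talagrand inequality gives
\begin{align*}
\abs{F(\hat X_t)-\hat F(t,\hat X_t)}^2 \leq \kappa_{\Wasser}^2\,\Wasser_2^2(\bar\rho_{t,\hat X_t},\bar\mu_{\hat X_t}) \leq \frac{2\kappa_{\Wasser}^2}{\alpha_{\TI}}\,\RelEnt(\bar\rho_{t,\hat X_t}|\bar\mu_{\hat X_t}),
\end{align*}
where all Wasserstein distances between conditional measures use $d_{\Sigma_x}$. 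The diffusion term is split identically into $\sqrt{\hat\Gamma(t,\hat X_t)}-\sqrt{\Gamma(\hat X_t)}$ (discrepancy of conditional averages) and $\sqrt{\Gamma(\hat X_t)}-\sqrt{\Gamma(\bar X_t)}$ (Lipschitz in $x$, producing $2\||\nabla_x\sqrt{\Gamma}|_F\|_{L^\infty}^2$ in $C_{\Wasser}$). To bound the discrepancy piece by $\lambda_{\Wasser}^2\,\Wasser_2^2(\bar\rho_{t,\hat X_t},\bar\mu_{\hat X_t})$ I would convert the level-set Lipschitz bound on $\sqrt{\nabla\xi\gamma\nabla\xi^T}$ into a bound on $\abs{\sqrt{\hat\Gamma}-\sqrt{\Gamma}}_F$ via an operator-Lipschitz (Araki--Yamagami-type) estimate for the matrix square root, again closing with Talagrand.

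Assembling these, the differential inequality takes the form $\frac{d}{dt}\E|e_t|^2\le C_{\Wasser}\,\E|e_t|^2+\frac{c(\kappa_{\Wasser}^2+\lambda_{\Wasser}^2)}{\alpha_{\TI}}\,\E_{\hat\rho_t}\pra{\RelEnt(\bar\rho_{t,x}|\bar\mu_x)}$ for a generic constant $c$. The remaining task is to turn the averaged conditional relative entropy into something integrable in time. Applying the conditional Log-Sobolev inequality on $\Sigma_x$ and then the key geometric identity that $\nabla_{\Sigma_x}$ annihilates functions of $\xi$ (so that $\nabla_{\Sigma_x}\log(\bar\rho_{t,x}/\bar\mu_x)=\nabla_{\Sigma_x}\log(\rho_t/\mu)$), I would bound
\begin{align*}
\int_{\R^k}\RelEnt(\bar\rho_{t,x}|\bar\mu_x)\,d\hat\rho_t(x)\leq \frac{1}{2\alpha_{\LSI}}\int_{\R^n}\abs{\nabla_{\Sigma_x}\log\frac{\rho_t}{\mu}}^2 d\rho_t \leq \frac{1}{2\alpha_{\LSI}}\RF(\rho_t|\mu)\leq \frac{\RF_{\gamma}(\rho_t|\mu)}{2\alpha_{\LSI}\lambda_{\min}(\gamma)}.
\end{align*}
A Gronwall step with integrating factor $e^{C_{\Wasser}t}$ then leaves $\int_0^t \RF_{\gamma}(\rho_s|\mu)\,ds$, which the entropy-dissipation identity~\eqref{eq:EDI} identifies with $\RelEnt(\rho_0|\mu)-\RelEnt(\rho_t|\mu)$, giving the stated bound.

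I expect the main obstacle to be the two matrix/geometric points that have no analogue in a scalar Euclidean computation: first, controlling $\abs{\sqrt{\hat\Gamma(t,x)}-\sqrt{\Gamma(x)}}_F$ — the difference of matrix square roots of conditional averages — by the level-set Lipschitz modulus of $\sqrt{\nabla\xi\gamma\nabla\xi^T}$, which requires a genuine operator-Lipschitz inequality rather than an elementary estimate; and second, the Fisher-information decomposition on curved level sets, where one must justify, using the co-area formula and the surface gradient $\nabla_{\Sigma_x}$, that the averaged conditional relative Fisher information is dominated by the full (and then $\gamma$-weighted) Fisher information. Everything else — the coupling, the Young and Gronwall steps, and the bookkeeping of the additive It\^o correction $\gamma:\nabla^2\xi$ inside $U$ and $\hat F,F$ — follows the linear proof of Theorem~\ref{NL-thm:Was} essentially verbatim.
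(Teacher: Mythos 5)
Your proof follows the same route as the paper's: in fact the paper gives no separate argument for Theorem~\ref{NL-thm:Wass} (it states only that the proof ``follows on the lines of Theorem~\ref{NL-thm:Was}''), and your synchronous coupling (in SDE rather than Fokker--Planck form), the add-and-subtract splitting that produces $C_{\Wasser}$, the treatment of the drift discrepancy by a coupling argument plus Talagrand, the identity $\nabla_{\Sigma_x}\log(\bar\rho_{t,x}/\bar\mu_x)=\nabla_{\Sigma_x}\log(\rho_t/\mu)$ (correct: the Jacobian and marginal factors are functions of $\xi$, hence annihilated by the surface gradient), and the LSI/Gronwall/entropy-dissipation conclusion all coincide with the proof of Theorem~\ref{NL-thm:Was} in Appendix~\ref{App-sec:Lin}, transplanted to the level sets $\Sigma_x$ with the intrinsic metric $d_{\Sigma_x}$.

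There is, however, one genuine gap: the diffusion-discrepancy step. You need $\abs{\sqrt{\hat\Gamma(t,x)}-\sqrt{\Gamma(x)}}_F^2\le\lambda_{\Wasser}^2\,\Wasser_2^2(\bar\rho_{t,x},\bar\mu_x)$, where $\hat\Gamma,\Gamma$ are conditional \emph{averages} of $M:=\nabla\xi\gamma\nabla\xi^T$, while the hypothesis (WN3) concerns Lipschitzness of $z\mapsto\sqrt{M(z)}$ along $\Sigma_x$. An operator-Lipschitz inequality of Araki--Yamagami/Powers--St{\o}rmer type compares $\sqrt{A}-\sqrt{B}$ with $A-B$; to invoke it you must first convert (WN3) into Lipschitzness of $M$ itself, via $M(z)-M(z')=\sqrt{M(z)}\bra{\sqrt{M(z)}-\sqrt{M(z')}}+\bra{\sqrt{M(z)}-\sqrt{M(z')}}\sqrt{M(z')}$, which costs a factor $2\sup_z\abs{\sqrt{M(z)}}$, and then convert back using the operator-Lipschitz constant of the square root on matrices bounded below, which costs roughly $\bra{2\sqrt{\inf_z\lambda_{\min}(M(z))}}^{-1}$. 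The net outcome is the stated estimate with $\lambda_{\Wasser}^2$ multiplied by a condition-number factor $\sup_z\lambda_{\max}(M(z))/\inf_z\lambda_{\min}(M(z))\ge 1$, i.e.\ a strictly weaker bound than the theorem claims. What this step actually requires --- and what the paper uses in the linear proof you are mirroring --- is Lieb's concavity theorem (\cite[Theorem IX.6.1]{bhatia2013matrix}): the map $(A,B)\mapsto\abs{\sqrt{A}-\sqrt{B}}_F^2$ is \emph{jointly convex}, so for any coupling $\Pi$ of $\bar\rho_{t,x}$ and $\bar\mu_x$ supported on $\Sigma_x\times\Sigma_x$, the two-component Jensen inequality gives
\begin{align*}
\abs{\sqrt{\hat\Gamma(t,x)}-\sqrt{\Gamma(x)}}_F^2 \leq \int_{\Sigma_x\times\Sigma_x}\abs{\sqrt{M(z)}-\sqrt{M(z')}}_F^2\,d\Pi(z,z') \leq \lambda_{\Wasser}^2\int_{\Sigma_x\times\Sigma_x}d_{\Sigma_x}(z,z')^2\,d\Pi(z,z'),
\end{align*}
and taking the infimum over couplings followed by the Talagrand inequality closes the step with exactly the constant $\lambda_{\Wasser}^2$. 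With this substitution (convexity and Jensen in place of operator-Lipschitz bounds) your argument is complete and coincides with the intended proof.
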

We skip the proof, since it follows  on the lines of Theorem~\ref{NL-thm:Was}. 

\section{Scale separation and averaging}\label{sec:eps}
In the previous section we presented error estimates comparing the projected and the effective dynamics. In this section we discuss the asymptotic behaviour and sharpness of these estimates in the presence of scale separation. Specifically we focus on the case of averaging in SDEs with diagonal diffusion matrices. We first write the effective dynamics and state the time-marginal estimates derived in the last section in the averaging setting for a fixed value of the scale-separation parameter. We then study the asymptotic behaviour of these estimates in two specific examples -- reversible SDEs in Section~\ref{sec:eps-Rev} and linear diffusions in Section~\ref{sec:eps-Lin}. Extensive literature has been devoted to the study of these averaging problems and in Section~\ref{sec:eps-Lin}, we compare our results with the existing averaging literature. 

We consider the following SDE
\begin{align}\label{eq:eps-SDE}
 \begin{aligned}
 dX^\vep_t &=  f_1(X^\vep_t,Y^\vep_t)\, dt +  \sqrt{2}\sigma_1(X^\vep_t, Y^\vep_t)\, dW^1_t, \ \  X_{t=0}=X_0, \\
 dY^\vep_t &= \frac{1}{\vep} f_2(X^\vep_t, Y^\vep_t)\, dt +  \sqrt{\frac{2}{\vep}}\sigma_2(X^\vep_t,Y^\vep_t)\, dW^2_t, \  \ Y_{t=0}=Y_0.
 \end{aligned}
\end{align}
With $n=n_x+n_y$, here $X^\vep_t,Y^\vep_t$ are random variables in $\R^{n_x},\R^{n_y}$ respectively, $f_1:\R^n\rightarrow\R^{n_x}$, $f_2:\R^n\rightarrow\R^{n_y}$, $\sigma_1:\R^n\rightarrow\R^{n_x\times  n_x}$, $\sigma_2:\R^n\rightarrow\R^{n_y\times  n_y}$, and  $W^1_t,W^2_t$ are standard independent Brownian motions in $\R^{ n_x}$ and $\R^{ n_y}$. The parameter $0<\vep\ll 1$ encodes scale separation, and thereby $Y^\vep_t$ can be viewed as the \emph{fast} variable and $X^\vep_t$ as the \emph{slow} variable. Following such a splitting, the natural coarse-graining map here is the coordinate projection onto the slow variable, i.e.\
\begin{equation*}
\xi:\R^n\rightarrow\R^{n_x}, \ \xi(x,y)=x,
\end{equation*}
which is the CG map studied in Section~\ref{sec:LinCG}. 
As stated in Assumption~\ref{ass:Coeff-Stat}, we assume that (1) the coefficients $f^\vep,\gamma^\vep\in C^\infty$ and the  diffusion matrix $\gamma^\vep$ is positive definite, and (2) the SDE~\eqref{eq:eps-SDE} admits a invariant measure $\mu^\vep\in\mathcal P(\R^{n})$. Here  
\begin{equation}\label{eq:eps-Coeff}
f^\vep = \begin{pmatrix} f_1\\ \dfrac{1}{\vep} f_2 \end{pmatrix}, \ \ \gamma^\vep =\begin{pmatrix} \gamma_1 & 0 \\ 0 & \gamma_2^\vep\end{pmatrix}, \ \ \gamma_1 = \sigma_1 \sigma_1^T, \ \ \gamma_2^\vep = \frac{1}{\vep} \gamma_2, \ \ \gamma_2 =\sigma_2 \sigma_2^T,
\end{equation}
with $f^\vep:\R^n\rightarrow\R^n$, $\gamma^\vep:\R^n\rightarrow\R^{n\times  n}$ and $ n= n_x+ n_y$. 
As explained above, the coarse-grained dynamics is the slow variable $\xi(Z^\vep_t)=X^\vep_t$, and its law $\hat\rho^\vep_t=\law(X^\vep_t)=\law(\xi(Z^\vep_t))=\xi_\#\rho_t^\vep$ evolves according to
\begin{equation}\label{eq:eps-Proj}
\partial_t\hat\rho^\vep = \nabla_x\cdot (\hat F^\vep \hat\rho^\vep) + \nabla^2_x:\hat \Gamma^\vep\hat\rho^\vep,
\end{equation}
where the coefficients $\hat F^\vep:[0,T]\times \R^{n_x}\rightarrow \R^{n_x}$, $\hat \Gamma^\vep :[0,T]\times \R^{n_x}\rightarrow \R^{n_x\times n_x}$ are 
\begin{align*}
\hat F^\vep(t,x) = \int_{\R^{n_y}} - f_1(x,y) \, d\bar\rho^\vep_{t,x}(y), \quad \hat \Gamma^\vep(t,x) = \int_{\R^{n_y}} \gamma_1(x,y) \, d\bar\rho^\vep_{t,x}(y). 
\end{align*}
As before we have used $\bar\rho^\vep_{t,x}\in\mathcal P(\R^{n_y})$ for the conditional measure corresponding to $\rho^\vep_t$ under the mapping $\xi$. The effective dynamics $\eta_t^\vep$ is the closure of the slow variable $X_t$ using the invariant measure and satisfies the evolution
\begin{equation}\label{eq:eps-EffGen}
\partial_t\eta^\vep= \nabla_x\cdot(F^\vep\eta^\vep)+ \nabla_x^2:\Gamma^\vep\eta^\vep,
\end{equation}
where the coefficients $F^\vep:\R^{n_x}\rightarrow \R^{n_x}$, $\Gamma^\vep : \R^{n_x}\rightarrow \R^{n_x\times n_x}$ are 
\begin{equation*}
F^\vep(x) = \int_{\R^{n_y}} - f_1(x,y) \, d\bar\mu^\vep_{x}(y), \quad  \Gamma^\vep(x) = \int_{\R^{n_y}} \gamma_1(x,y) \, d\bar\mu^\vep_{x}(y),
\end{equation*}
with $\bar\mu_x\in\mathcal P(\R^{n_y})$ as the conditional invariant measure. 

The entropy-dissipation result (recall~\eqref{eq:EDI}) in the current setting of~\eqref{eq:eps-SDE} reads
\begin{align}\label{eq:eps-EDI}
\frac{d}{dt}\RelEnt(\rho^\vep_t|\mu^\vep) = - \RF_{\gamma^\vep}(\rho^\vep_s|\mu^\vep)ds = - \int_{\R^n}\abs{\nabla_x\log\frac{\rho^\vep_t}{\mu^\vep}}^2_{\gamma_1}\rho^\vep_t 
- \frac{1}{\vep}\int_{\R^n}\abs{\nabla_y\log\frac{\rho^\vep_t}{\mu^\vep}}^2_{\gamma_2}\rho^\vep_t.
\end{align}
To simplify the analysis, throughout this section we will assume that the initial datum is independent of $\vep$.

Before we proceed with writing the time-marginal estimates in this setup, we briefly recall the dynamics of~\eqref{eq:eps-SDE} in the limit of $\vep\rightarrow0$ (see~\cite[Theorem 4]{pardoux2003poisson} for proof). 
\begin{thm}[Classical averaging]\label{thm:aver}
Assume that 
\begin{enumerate}[topsep=0pt]
\item The coefficients $f_2,\gamma_2 \in C^2_b(\R^n)$ satisfy the bounds $c_1 \Id_n \leq \gamma_2(z) \leq c_2\Id_n$ for $c_1,c_2>0$, and the growth condition 
\begin{equation*}
\lim\limits_{|x|\rightarrow \infty} \sup\limits_{y} f_2(x,y)x=-\infty.
\end{equation*}
\item The coefficients $f_1,\sigma_1$ are Lipschitz in the fast-variable $y$, and satisfy the growth conditions
\begin{equation*}
|f_1(x,y)|\leq K(1+|y|)(1+|x|^{\ell_1}), \ |(x,y)|\leq K(1+\sqrt{|y|})(1+|x|^{\ell_2}),
\end{equation*}
for $K,\ell_1,\ell_2>0$. 
\end{enumerate}
Then, for any fixed $x\in\R^{n_x}$, the fast dynamics admits an invariant measure $\mu^{\av}_x\in \mathcal P(\R^{n_y})$, i.e.\
\begin{equation*} \nabla_y\cdot\pra{f_2(x,\cdot)\mu^{\av}_x} + \nabla^2_y: \pra{\gamma_2(x,\cdot)\mu^{\av}_x }  =0.
\end{equation*}
Furthermore $\law((X_t^\vep)_{0\leq t\leq T})\xrightarrow{\vep\rightarrow 0} \law((X^{\av}_t)_{0\leq t\leq T})$ weakly in $C([0,T],\R^{n_x})$, where $X^{\av}$ is the unique solution in law of
\begin{equation}\label{eq:eps-Aver}
dX^{\av}_t = F^{\av} (X^{\av}_t)dt + \sqrt{2 \gamma^{\av}(X^{\av}_t)}dW_t,
\end{equation}
where $W_t$ is a Brownian motion in $\R^{n_x}$. The coefficients $F^{\av}:\R^{n_x}\rightarrow\R^{n_x}$ and $\sigma^{\av}:\R^{n_x}\rightarrow\R^{n_x\times n_x}$ are given by
\begin{equation*}
F^{\av}(x) = \int_{\R^{n_y}} f_1(x,y) \,d\mu^{\av}_x(y), \quad \gamma^{\av}(x) = \int_{\R^{n_y}}\gamma_1(x,y)d\mu^{\av}_x(y).
\end{equation*}
\end{thm}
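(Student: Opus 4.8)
The plan is to follow the classical Khasminskii averaging scheme, recast in the language of the Poisson equation as in~\cite{pardoux2003poisson}. The first step is to analyse the \emph{frozen} fast process: for fixed $x\in\R^{n_x}$, consider
\begin{equation*}
d\tilde Y_t = f_2(x,\tilde Y_t)\,dt + \sqrt{2}\,\sigma_2(x,\tilde Y_t)\,dW^2_t.
\end{equation*}
The uniform ellipticity $c_1\Id_{n_y}\le \gamma_2\le c_2\Id_{n_y}$, the smoothness $f_2,\gamma_2\in C^2_b$, and the stated recurrence condition on $f_2$ together guarantee that this diffusion is geometrically ergodic with a unique invariant measure $\mu^{\av}_x$ solving the stationary Fokker--Planck equation in the statement, and that $\mu^{\av}_x$ depends smoothly on the frozen parameter $x$. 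The centred coefficients $\tilde f_1:=f_1-F^{\av}$ and $\tilde\gamma_1:=\gamma_1-\gamma^{\av}$ then have zero average against $\mu^{\av}_x$ for every $x$.

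The second step sets up the correctors. Let $L^x_2:=f_2(x,\cdot)\cdot\nabla_y+\gamma_2(x,\cdot):\nabla^2_y$ denote the generator of the frozen process. For each component of $\tilde f_1$ I would solve the Poisson equation $-L^x_2\Phi(x,\cdot)=\tilde f_1(x,\cdot)$ on $\R^{n_y}$; the Pardoux--Veretennikov theory provides a solution with controlled polynomial growth in $y$ and the regularity in $x$ needed below, precisely because of the spectral gap from the first step. This corrector is the device that will convert the $O(1)$ oscillations of the slow drift into its average. The third step is a priori control: using the growth bounds on $f_1,\sigma_1$, the recurrence of the fast dynamics, and standard moment estimates, I would establish uniform-in-$\vep$ bounds of the form $\sup_\vep\E\sup_{t\le T}\abs{X^\vep_t}^p<\infty$ together with integrated moment bounds on $Y^\vep$. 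These feed into a tightness argument: writing $X^\vep$ as drift plus martingale and invoking the Aldous/Kolmogorov criterion shows that $\set{\law(X^\vep):\vep>0}$ is tight in $C([0,T],\R^{n_x})$, so every subsequence has a weakly convergent sub-subsequence.

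The final and decisive step identifies any weak limit $X$. Applying It\^o's formula to $\psi(X^\vep_t)+\vep\,\Phi(X^\vep_t,Y^\vep_t)$ for a smooth, compactly supported test function $\psi$, the defining property $-L^x_2\Phi=\tilde f_1$ exactly cancels the fast, mean-zero part of the slow drift, while the explicit prefactor $\vep$ forces the corrector term and the cross It\^o terms to vanish in $L^1$ as $\vep\to0$ (here the growth bounds on $\Phi$ and the moment estimates are essential). The surviving terms reproduce the averaged generator $\bar L:=F^{\av}\cdot\nabla_x+\gamma^{\av}:\nabla^2_x$, and the quadratic variation of the slow martingale part, $\int_0^t 2\gamma_1(X^\vep_s,Y^\vep_s)\,ds$, converges to $\int_0^t 2\gamma^{\av}(X_s)\,ds$ by the same averaging mechanism. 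Hence $X$ solves the martingale problem for $\bar L$, i.e.\ the averaged SDE~\eqref{eq:eps-Aver}.

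Since the ergodic averages inherit enough regularity to make $F^{\av},\gamma^{\av}$ locally Lipschitz with the stated growth, this martingale problem is well posed, and uniqueness in law upgrades subsequential convergence to convergence of the whole family. The main obstacle throughout is the interaction between \emph{unbounded} coefficients and the fast ergodicity: obtaining the Poisson equation's growth estimates and the uniform moment bounds that justify passing $\vep\to0$ inside the It\^o expansion is the technical heart of the argument, and it is exactly this combination that the cited Poisson-equation machinery is designed to supply.
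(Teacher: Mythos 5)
Your proposal matches the paper's treatment: the paper does not prove Theorem~\ref{thm:aver} itself but quotes it from Pardoux--Veretennikov~\cite[Theorem 4]{pardoux2003poisson}, and the proof given there is exactly the scheme you outline — ergodicity of the frozen fast process, Poisson-equation correctors with polynomial growth bounds, uniform moment estimates and tightness, and identification of the limit via a perturbed test function and the martingale problem for the averaged generator. The only sketch-level imprecision is that the vector-valued corrector $\Phi$ should be contracted against $\nabla_x\psi$ (with a companion corrector for $\tilde\gamma_1:\nabla^2_x\psi$, or an equivalent ergodic argument for the quadratic variation) rather than added to $\psi$ directly, which is how the cited argument is actually organised.
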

In what follows, we refer to the limit dynamics~\eqref{eq:eps-Aver} as the \emph{averaged dynamics}. 

\subsection{Error estimates for fixed $\vep>0$}\label{sec:fix-eps-est}

In the next three propositions, we recast the relative entropy entropy result in Theorem~\ref{NL-thm:RE}, the Wasserstein result in Theorem~\ref{NL-thm:Was} and the path-space result in Theorem~\ref{thm:patherror}  into the current setting for a fixed value of $\vep>0$.
\begin{prop}[Relative entropy] \label{NL-thm:REeps}
Fix $\vep>0$ and and in addition to Assumption~\ref{ass:Coeff-Stat} assume that 
\begin{enumerate}[topsep=0pt]
\item The conditional invariant measure $\bar\mu^\vep_x$ satisfies the Talagrand and the Log-Sobolev inequality uniformly in $x\in\R^{n_x}$ with constant $\alpha^\vep_{\TI}$ and $\alpha^\vep_{\LSI}$ respectively.
\item The constant $\kappa^\vep_{\RelEnt}>0$ is such that 
\begin{align*}
\kappa^\vep_{\RelEnt}:=\sup\limits_{x\in\R^{n_x}} \sup\limits_{y,y'\in\R^{n_y}} \frac{|\mathcal F^\vep(y)-\mathcal F^\vep(y')|_{(\Gamma^\vep)^{-1/2}(x)}}{|y-y'|} <\infty, 
\end{align*}
where $\mathcal F^\vep:\R^{n}\rightarrow\R^{n_y}$ is defined as  
$\mathcal F^\vep:=f_1  - \nabla_x\cdot \gamma_1-(\gamma_1-\Gamma^\vep)\nabla_x\log\mu^\vep$. 
\item The constant $\lambda^\vep_{\RelEnt}>0$ is such that $\lambda^\vep_{\RelEnt}:=\left\| \ \left|(\Gamma^\vep)^{-1/2}(\gamma_1- \Gamma^\vep) \right| \ \right\|_{L^\infty(\R^n)}<\infty$. 
\end{enumerate}
Then for any $t>0$ 
\begin{equation}\label{eq:eps-RelEnt}
\RelEnt(\hat\rho^\vep_t|\eta^\vep_t)\leq \RelEnt(\hat\rho_0|\eta_0)  + \bra{\frac{(\lambda^\vep_{\RelEnt})^2}{\lambda_{\min}(\gamma_1)} + \vep \frac{(\kappa^\vep_{\RelEnt})^2}{\alpha^\vep_{\TI}\alpha^\vep_{\LSI}\lambda_{\min}(\gamma_2)}}\pra{\RelEnt(\rho_0|\mu^\vep)-\RelEnt(\rho^\vep_t|\mu^\vep)},
\end{equation}
where $\lambda_{\min}(B)$ is the smallest eigenvalue of matrix $B$.
\end{prop}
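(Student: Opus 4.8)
The statement is the scale-separated specialization of Theorem~\ref{NL-thm:RE}, so the plan is to re-run its proof (Appendix~\ref{App-sec:Lin}) for the SDE~\eqref{eq:eps-SDE}, which is precisely~\eqref{eq:genSDExy} with the block-diagonal diffusion $\gamma^\vep$ and drift $f^\vep$ of~\eqref{eq:eps-Coeff} and the coordinate-projection CG map, so that all of the linear-CG machinery applies verbatim. The reason a mere black-box application of Theorem~\ref{NL-thm:RE} does not suffice is instructive: taking $\gamma=\gamma^\vep$ there gives the prefactor $\tfrac{2}{\lambda_{\min}(\gamma^\vep)}\bra{\lambda_{\RelEnt}^2+\kappa_{\RelEnt}^2/(\alpha_{\TI}\alpha_{\LSI})}$, and since $\lambda_{\min}(\gamma^\vep)=\lambda_{\min}(\gamma_1)$ for all small $\vep$ (the fast block $\gamma_2/\vep$ being large), this crude bound loses exactly the factor $\vep$ that multiplies the $\kappa$-term in~\eqref{eq:eps-RelEnt}. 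The whole point is therefore to repeat the argument while keeping the slow and fast diffusion blocks separate.

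Concretely, I would follow the proof of Theorem~\ref{NL-thm:RE} up to the point where, after a Young's inequality, $\tfrac{d}{dt}\RelEnt(\hat\rho^\vep_t|\eta^\vep_t)$ has been bounded by the sum of a diffusion-mismatch production term (built from $\gamma_1-\Gamma^\vep$) and a drift-mismatch production term (built from $\mathcal F^\vep$), the dissipation of $\RelEnt(\hat\rho^\vep_t|\eta^\vep_t)$ itself being absorbed. The key structural input is then the split entropy-dissipation identity~\eqref{eq:eps-EDI}, in which the fast ($y$) channel carries the weight $1/\vep$ while the slow ($x$) channel is unweighted. The diffusion-mismatch term, expressed through the full slow gradient $\nabla_x\log(\rho^\vep_t/\mu^\vep)$, is bounded by $(\lambda^\vep_{\RelEnt})^2\int|\nabla_x\log(\rho^\vep_t/\mu^\vep)|^2\rho^\vep_t$; using $\gamma_1\ge\lambda_{\min}(\gamma_1)\Id$ and integrating in time against the slow part of~\eqref{eq:eps-EDI} yields the first summand $\tfrac{(\lambda^\vep_{\RelEnt})^2}{\lambda_{\min}(\gamma_1)}\pra{\RelEnt(\rho_0|\mu^\vep)-\RelEnt(\rho^\vep_t|\mu^\vep)}$, carrying neither $\vep$ nor the functional-inequality constants.

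The drift-mismatch term is where the fast scale and the functional inequalities enter. Using the Lipschitz-in-$y$ bound $\kappa^\vep_{\RelEnt}$ to control the difference of conditional expectations by $\Wasser_2(\bar\rho^\vep_{t,x},\bar\mu^\vep_x)$, then the Talagrand inequality to pass to $\RelEnt(\bar\rho^\vep_{t,x}|\bar\mu^\vep_x)$, then the Log-Sobolev inequality to pass to the conditional relative Fisher information, and finally integrating over $x$, this term is controlled by the fast ($y$-direction) Fisher information of $\rho^\vep_t$ against $\mu^\vep$. The crucial bookkeeping step is that the $1/\vep$ weight in~\eqref{eq:eps-EDI} gives $\int_0^t\int|\nabla_y\log(\rho^\vep_s/\mu^\vep)|^2_{\gamma_2}\rho^\vep_s\,ds\le \vep\pra{\RelEnt(\rho_0|\mu^\vep)-\RelEnt(\rho^\vep_t|\mu^\vep)}$, which after $\gamma_2\ge\lambda_{\min}(\gamma_2)\Id$ produces the second summand $\vep\,\tfrac{(\kappa^\vep_{\RelEnt})^2}{\alpha^\vep_{\TI}\alpha^\vep_{\LSI}\lambda_{\min}(\gamma_2)}\pra{\,\cdot\,}$. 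Since the initial datum is $\vep$-independent, $\RelEnt(\hat\rho^\vep_0|\eta^\vep_0)=\RelEnt(\hat\rho_0|\eta_0)$, and collecting the two summands gives~\eqref{eq:eps-RelEnt}.

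I expect the main obstacle to be precisely the correct attribution of the two production terms to the two dissipation channels: verifying that the diffusion mismatch genuinely pairs with the pure slow gradient (hence the $\lambda_{\min}(\gamma_1)$ denominator and the absence of $\alpha^\vep_{\TI},\alpha^\vep_{\LSI}$ and of $\vep$), whereas the drift mismatch --- including the $(\gamma_1-\Gamma^\vep)\nabla_x\log\mu^\vep$ piece that has been folded into $\mathcal F^\vep$ --- pairs with the conditional (fast) Fisher information (hence $\alpha^\vep_{\TI}\alpha^\vep_{\LSI}$, $\lambda_{\min}(\gamma_2)$, and the gain of one power of $\vep$). In Theorem~\ref{NL-thm:RE} these two contributions are merged under a single $\lambda_{\min}(\gamma)$, so essentially all the care needed here lies in not collapsing the block structure prematurely and in tracking which gradient each term is contracted against.
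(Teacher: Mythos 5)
Your plan is structurally the paper's own route: rerun the proof of Theorem~\ref{NL-thm:RE} for the block system \eqref{eq:eps-SDE}, keep the two diffusion blocks separate, send the drift mismatch (Lipschitz bound, then Talagrand, then Log--Sobolev) to the $y$-direction Fisher information and the diffusion mismatch to the $x$-direction Fisher information, and close with the split entropy--dissipation identity \eqref{eq:eps-EDI}; this correctly produces the gain of one power of $\vep$ on the $\kappa$-term. However, there is a genuine gap in your constant bookkeeping. The quantity to be controlled, coming from \eqref{NL-eq:fir-thm}, is $\int_0^t\int_{\R^{n_x}}|h^\vep_s|^2_{(\Gamma^\vep)^{-1}}\,d\hat\rho^\vep_s\,ds$ with $h^\vep_s=\mathrm{I}+\mathrm{II}$ the \emph{sum} of the drift- and diffusion-mismatch terms, so the cross term forces a Young inequality $|h^\vep_s|^2_{(\Gamma^\vep)^{-1}}\leq(1+\tau)|\mathrm{II}|^2_{(\Gamma^\vep)^{-1}}+(1+1/\tau)\,|\mathrm{I}|^2_{(\Gamma^\vep)^{-1}}$ for some $\tau>0$. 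In your two summands these factors $(1+\tau)$ and $(1+1/\tau)$ have silently disappeared, and you then bound each dissipation channel \emph{separately} against the entropy difference $\Delta:=\RelEnt(\rho_0|\mu^\vep)-\RelEnt(\rho^\vep_t|\mu^\vep)$. Writing $A:=(\lambda^\vep_{\RelEnt})^2/\lambda_{\min}(\gamma_1)$ and $B:=\vep(\kappa^\vep_{\RelEnt})^2/(\alpha^\vep_{\TI}\alpha^\vep_{\LSI}\lambda_{\min}(\gamma_2))$, this separate-channel strategy gives at best $\min_{\tau>0}[(1+\tau)A+(1+1/\tau)B]\,\Delta=(\sqrt{A}+\sqrt{B})^2\Delta$, which is strictly larger than the asserted $(A+B)\Delta$ whenever $A,B>0$. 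So the steps as described prove only a weakened version of \eqref{eq:eps-RelEnt}, not the stated inequality.

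The missing idea --- and the actual content of the paper's proof --- is the opposite of attributing each production term to ``its own'' dissipation channel: one chooses $\tau$ to \emph{equalize} the two Young-weighted prefactors, namely $\tau=B/A=\vep(\kappa^\vep_{\RelEnt})^2\lambda_{\min}(\gamma_1)/(\alpha^\vep_{\TI}\alpha^\vep_{\LSI}(\lambda^\vep_{\RelEnt})^2\lambda_{\min}(\gamma_2))$, so that $(1+\tau)A=(1+1/\tau)B=A+B$, and only then invokes \eqref{eq:eps-EDI} applied to the sum of the two channels at once, $\int_0^t\int_{\R^n}|\nabla_x\log(\rho^\vep_s/\mu^\vep)|^2_{\gamma_1}\,d\rho^\vep_s\,ds+\tfrac{1}{\vep}\int_0^t\int_{\R^n}|\nabla_y\log(\rho^\vep_s/\mu^\vep)|^2_{\gamma_2}\,d\rho^\vep_s\,ds=\Delta$, never to each channel on its own. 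This balancing step is exactly what yields the constant $A+B$ in \eqref{eq:eps-RelEnt} (note it is even sharper, by a factor of $2$, than the crude splitting used in Theorem~\ref{NL-thm:RE} itself). Without it, your argument is sound in structure and in its $\vep$-scaling, but it cannot reach the inequality as stated.
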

For a proof of this result see Appendix~\ref{App-sec:Lin}.
\begin{remark}\label{rem:SlowDiff} In the case when the diffusion coefficient of the slow variable $X_t$ only depends on the slow variable, i.e. $\sigma_1=\sigma_1(x)$, it follows that $\Gamma^\vep=\gamma_1$ and therefore $\lambda_{\RelEnt}=0$ and $\mathcal F= f - \nabla_x\cdot\gamma_1$ is independent of $\vep$. Thus, we arrive at the sharper estimate 
\begin{equation*}
\RelEnt(\hat\rho^\vep_t|\eta^\vep_t)\leq \RelEnt(\hat\rho_0|\eta_0)  +  \vep \frac{(\kappa_{\RelEnt})^2}{\alpha^\vep_{\TI}\alpha^\vep_{\LSI}\lambda_{\min}(\gamma_2)}
\pra{\RelEnt(\rho_0|\mu^\vep)-\RelEnt(\rho^\vep_t|\mu^\vep)},
\end{equation*}
where $\kappa_{\RelEnt}$ is the Lipschitz constant of $f-\nabla_x\cdot\gamma_1$.
\end{remark}

\begin{prop}[Wasserstein distance]\label{prop-wass:eps} Fix $\vep>0$ and and in addition to Assumption~\ref{ass:Coeff-Stat} assume that
\begin{enumerate}[topsep=0pt]
\item The conditional invariant measure $\bar\mu_x^\vep$ satisfies the Talagrand and the Log-Sobolev inequality uniformly in $x\in\R^{n_x}$ with constant $\alpha_{\TI}^\vep$ and $\alpha_{\LSI}^\vep$ respectively.
\item The coefficients of the slow variable $\xi(Z^\vep_t)=X^\vep_t$ are Lipschitz with constants $0<\kappa_{\Wasser},\lambda_{\Wasser}<\infty$, i.e.\ 
\begin{equation*}
\kappa_{\Wasser}:= \||\nabla_y f_1|\|_{L^\infty(\R^n)}, \ \ \lambda_{\Wasser}:= \Bigl\||\nabla_y\sqrt{\gamma_1}|_F\Bigr\|_{L^\infty(\R^n)}, 
\end{equation*}
where the second norm above is the operator norm for the three tensor in $\R^{n_x\times n_x\times n_y}$. 
\end{enumerate}
Then for any $t\in [0,T]$ 
\begin{equation}\label{eq:eps-Wasser}
\Wasser^2_2(\hat\rho^\vep_t,\eta^\vep_t)\leq e^{C^\vep_{\Wasser}t}\bra{\Wasser^2_2(\hat\rho_0,\eta_0)+\vep\frac{\lambda_{\Wasser}^2+\kappa^2_{\Wasser}}{\alpha^\vep_{\TI}\alpha^\vep_{\LSI}\lambda_{\min}(\gamma_1)}\pra{\RelEnt(\rho_0|\mu^\vep) - \RelEnt(\rho^\vep_t|\mu^\vep) } }
\end{equation}
where $C^\vep_{\Wasser}:=1+\max\{ 2\| |\nabla_x \sqrt{\Gamma^\vep}|_F\|^2_{L^\infty(\R^n)},\||\nabla_x F^\vep|\|_{L^\infty(\R^n)}\}$ and $\lambda_{\min}(\gamma_1)$ is defined in~\eqref{def:min-lam}. 
\end{prop}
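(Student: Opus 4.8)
The proof is a specialization of the synchronous-coupling argument behind Theorem~\ref{NL-thm:Was} to the two-scale system~\eqref{eq:eps-SDE}, the only genuinely new ingredient being a careful accounting of the $\vep$-dependence. Since the CG map is the coordinate projection $\xi(x,y)=x$ and $\gamma^\vep$ is block-diagonal with slow block $\gamma_1$, the coarse-grained mobility reduces to $\hat\Gamma^\vep(t,x)=\int\gamma_1\,d\bar\rho^\vep_{t,x}$ and $\Gamma^\vep(x)=\int\gamma_1\,d\bar\mu^\vep_x$, and the level-set Lipschitz constants required by Theorem~\ref{NL-thm:Was} become exactly $\kappa_\Wasser=\||\nabla_yf_1|\|_{L^\infty}$ and $\lambda_\Wasser=\||\nabla_y\sqrt{\gamma_1}|_F\|_{L^\infty}$ of the proposition's hypotheses. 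First I would couple the projected dynamics $\hat X^\vep_t$ (law $\hat\rho^\vep_t$) and the effective dynamics $\bar X^\vep_t$ (law $\eta^\vep_t$) synchronously through the common Brownian motion $B_t$ of~\eqref{def:BM}, and differentiate $\E|\hat X^\vep_t-\bar X^\vep_t|^2$.

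Next I would split each coefficient difference into an \emph{on-level-set} part and an \emph{$x$-Lipschitz} part, writing $\hat F^\vep(t,\hat X)-F^\vep(\bar X)=[\hat F^\vep(t,\hat X)-F^\vep(\hat X)]+[F^\vep(\hat X)-F^\vep(\bar X)]$ and similarly for $\sqrt{\hat\Gamma^\vep}-\sqrt{\Gamma^\vep}$. The $x$-Lipschitz parts are controlled by $\||\nabla_xF^\vep|\|_{L^\infty}$ and $\||\nabla_x\sqrt{\Gamma^\vep}|_F\|_{L^\infty}$ and, after Young's inequality, assemble into the prefactor $C^\vep_\Wasser$ via Gronwall's lemma. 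The remaining parts $\hat F^\vep(t,x)-F^\vep(x)=\int f_1(x,\cdot)\,d(\bar\mu^\vep_x-\bar\rho^\vep_{t,x})$ and its diffusion analogue measure the discrepancy between the conditional marginal $\bar\rho^\vep_{t,x}$ and the conditional invariant measure $\bar\mu^\vep_x$; by the Lipschitz hypotheses they are bounded by $\kappa_\Wasser\,\Wasser_2(\bar\rho^\vep_{t,x},\bar\mu^\vep_x)$ and $\lambda_\Wasser\,\Wasser_2(\bar\rho^\vep_{t,x},\bar\mu^\vep_x)$ respectively, so the source term of the Gronwall estimate is controlled by $(\kappa_\Wasser^2+\lambda_\Wasser^2)\int_0^t\E_{\hat\rho^\vep_s}[\Wasser_2^2(\bar\rho^\vep_{s,x},\bar\mu^\vep_x)]\,ds$.

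The crux, and the place where the factor $\vep$ is produced, is the estimation of this source term. Applying the Talagrand inequality and then the Log-Sobolev inequality fibrewise gives
\begin{equation*}
\E_{\hat\rho^\vep_s}\big[\Wasser_2^2(\bar\rho^\vep_{s,x},\bar\mu^\vep_x)\big]\leq\frac{1}{\alpha^\vep_{\TI}\alpha^\vep_{\LSI}}\int_{\R^n}\Big|\nabla_y\log\tfrac{\rho^\vep_s}{\mu^\vep}\Big|^2\,d\rho^\vep_s,
\end{equation*}
where I have used $\nabla_y\log(\bar\rho^\vep_{s,x}/\bar\mu^\vep_x)=\nabla_y\log(\rho^\vep_s/\mu^\vep)$ (the conditioning factors depend on $x$ only) to rewrite the fibrewise Fisher informations, integrated against $\hat\rho^\vep_s$, as the full $y$-Fisher information. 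Now I invoke the $\vep$-entropy-dissipation identity~\eqref{eq:eps-EDI}: because the $y$-block of $\RF_{\gamma^\vep}$ carries the weight $1/\vep$, bounding $\gamma_2\geq\lambda_{\min}(\gamma_2)\Id_{n_y}$ and integrating in time yields
\begin{equation*}
\int_0^t\int_{\R^n}\Big|\nabla_y\log\tfrac{\rho^\vep_s}{\mu^\vep}\Big|^2\,d\rho^\vep_s\,ds\leq\frac{\vep}{\lambda_{\min}(\gamma_2)}\big(\RelEnt(\rho_0|\mu^\vep)-\RelEnt(\rho^\vep_t|\mu^\vep)\big).
\end{equation*}
Substituting into the Gronwall estimate produces the source term $\tfrac{\vep(\kappa_\Wasser^2+\lambda_\Wasser^2)}{\alpha^\vep_{\TI}\alpha^\vep_{\LSI}\lambda_{\min}(\gamma_2)}(\RelEnt(\rho_0|\mu^\vep)-\RelEnt(\rho^\vep_t|\mu^\vep))$ and hence~\eqref{eq:eps-Wasser}, with the relevant eigenvalue being that of the \emph{fast} block $\gamma_2$, matching the drift term of Proposition~\ref{NL-thm:REeps}.

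The main obstacle I anticipate is precisely this scaling bookkeeping: one must resist lower-bounding $\RF_{\gamma^\vep}$ by $\lambda_{\min}(\gamma^\vep)$ times the full unweighted Fisher information---which would discard the $1/\vep$ weight and destroy the $\sqrt\vep$ rate---and instead isolate the $y$-component of the dissipation so that the $1/\vep$ survives as the $\vep$-gain. A secondary technical point is justifying the fibrewise application of the functional inequalities together with the interchange of the $x$-integration and the conditional Fisher information; this uses only that $\bar\mu^\vep_x$ satisfies the Talagrand and Log-Sobolev inequalities uniformly in $x$, while the moment and regularity conditions implicit in Assumption~\ref{ass:Coeff-Stat} guarantee that every quantity appearing is finite.
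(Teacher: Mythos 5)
Your proof is correct and follows essentially the same route as the paper's: the appendix proof of Proposition~\ref{prop-wass:eps} explicitly reuses the synchronous-coupling/Gronwall argument of Theorem~\ref{NL-thm:Was} up to \eqref{eq:Wass-aux} and then, exactly as you do, keeps the $y$-component of the dissipation separate so that the $1/\vep$ weight in \eqref{eq:eps-EDI} is converted into the factor $\vep$ rather than being discarded. One point worth flagging: your final constant carries $\lambda_{\min}(\gamma_2)$ (the fast block), which is precisely what the paper's own appendix proof produces and what its linear specialisation \eqref{eq:wasser_OU} uses (namely $\lambda_{\min}(A_{22}A_{22}^T)$), whereas the printed statement \eqref{eq:eps-Wasser} displays $\lambda_{\min}(\gamma_1)$ --- an internal inconsistency of the paper (evidently a typo in the statement, since the fibrewise Fisher information is in the $y$-direction), not a defect of your argument.
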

For a proof of this result see Appendix~\ref{App-sec:Lin}.

\begin{remark}[Behaviour of $C^\vep_{\Wasser}$]\label{behav-D}
By definition, to estimate $C^\vep_{\Wasser}$ we need to show that the effective coefficients $F^\vep, \Gamma^\vep$ are Lipschitz, which in turn implies that $\nabla_x F^\vep, \nabla_x\sqrt{\Gamma^\vep} \in L^\infty(\R^n)$ (since $\Gamma^\vep$ is bounded away from zero). In Lemma~\ref{lem:eff-coef-Lip} we show that this is indeed the case, when $f_1,\gamma_1$ are Lipschitz and the invariant measure $\nabla^2_{xy}\log\mu^\vep\in L^\infty(\R^n)$, and we have 
\begin{equation*}
\||\nabla_x F^\vep|\|_{L^\infty(\R^n)} \leq \||\nabla_x f_1|\|_{L^\infty(\R^n)} +\frac{1}{\alpha^\vep_{\LSI}} \| | \nabla_y f_1| \|_{L^\infty(\R^n)} \| | \nabla^2_{xy} \log\mu^\vep| \|_{L^\infty(\R^n)}.
\end{equation*}
A similar estimate holds for $\Gamma^\vep$. Therefore to estimate $C^\vep_{\Wasser}$ we need uniform in $\vep$ bounds on $\nabla_{xy}^2 \log\mu^\vep$, which holds in the settings of (nonlinear) reversible and the linear non-reversible diffusions as we shall discuss in the coming sections.  
Note that the estimates on $\nabla_x F^\vep, \nabla_x\Gamma$ in Lemma~\ref{lem:eff-coef-Lip} imply the well-posedness of the effective dynamics as well (following the arguments in~\cite[Section 2.4]{duong2018quantification}). 
\end{remark}

\begin{prop}[Error in path space]\label{prop:eps-path} Fix $\vep >0, \, T > 0$ and in addition to Assumption~\ref{ass:Coeff-Stat} assume that 
\begin{enumerate}[topsep=0pt]
\item The diffusion of the slow variable $X_t$ is independent of the fast variable $Y_t$, i.e.\ $\gamma_1 = \gamma_1(x)$. 
\item The first component of the drift $f_1$ is Lipschitz in $y$ with constant $\kappa_{\Wasser}$ uniformly in $x$. Furthermore,
$\bar \rho^\vep_{t,x}$ has finite second moments uniformly in $x \in \R^{n_x},\, t \in [0,T]$, i.e., 
\begin{equation*}
\sup\limits_{t\in[0,T]}\sup\limits_{x\in\R^{n_x}}\int_{\R^{n_y}} |y|^2 d\bar \rho^\vep_{t,x}(y)<\infty\,.
\end{equation*}
This implies that  
\begin{equation*}
\mathrm{Var}^\vep(f_1) := \sup\limits_{t\in[0,T]}\sup\limits_{x\in\R^{n_x}} \int \left(f_1(x,y) - \int f_1(x,y') \, d\bar \rho^\vep_{t,x}(y') \right)^2 \, d\bar \rho^\vep_{t,x}(y) < \infty.
\end{equation*}
\item $f_1 - F^\vep$ satisfies the Novikov's condition, i.e.\ 
$\E[\exp(\int_0^T |f_1-F^\vep|^2_{\gamma_1^{-1}}ds)]<\infty$.
\end{enumerate} Then 
\begin{equation}
\RelEnt(\hat\rho^\vep| \hat \nu ) \leq  \RelEnt(\rho_0|\nu_0) + \ T\frac{\mathrm{Var}^\vep(f_1)}{2\lambda_{\min}(\gamma_1)} + \vep \frac{\kappa_{\Wasser}^2}{4 \alpha^\vep_{\TI}\alpha^\vep_{\LSI} \lambda_{\min}(\gamma_1) \lambda_{\min}(\gamma_2)}\RelEnt(\rho_0|\mu^\vep).  \label{eq:relent_path_eps}
\end{equation}
\end{prop}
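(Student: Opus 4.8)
The plan is to mirror the proof of Theorem~\ref{thm:patherror} while carefully tracking the scale-separation parameter through the entropy-dissipation identity~\eqref{eq:eps-EDI}. As in the stationary case I would not compare the marginal path laws directly. Instead I introduce the auxiliary fully coupled system obtained by replacing the slow drift $f_1$ by its conditional average $F^\vep$ while leaving the fast equation unchanged (the analogue of~\eqref{eq:effPathSDE} with the $\tfrac1\vep f_2$, $\sqrt{2/\vep}\,\sigma_2$ fast dynamics), and denote by $\rho^\vep$ and $\nu$ the laws on $C([0,T];\R^n)$ of the original system~\eqref{eq:eps-SDE} and of this auxiliary system. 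Since $\hat\rho^\vep$ and $\hat\nu$ are the pushforwards of $\rho^\vep$ and $\nu$ under $\xi$, the data-processing inequality gives $\RelEnt(\hat\rho^\vep|\hat\nu)\le\RelEnt(\rho^\vep|\nu)$, so it suffices to bound the latter.

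The two systems share the same fast dynamics and the same diffusion, the slow diffusion matching precisely because Assumption (1) forces $\gamma_1=\gamma_1(x)$; hence $f^\vep-\bar f^\vep=\left(\begin{smallmatrix}f_1-F^\vep\\0\end{smallmatrix}\right)$ and only the slow noise $\sqrt{2\gamma_1}$ enters the Radon-Nikodym derivative. Using the Novikov condition of Assumption (3) to legitimise Girsanov's theorem exactly as in Theorem~\ref{thm:patherror}, I obtain
\[
\RelEnt(\rho^\vep|\nu)=\RelEnt(\rho_0|\nu_0)+\frac12\int_0^T\E_{\rho^\vep_t}\bigl[|f_1-F^\vep|^2_{\gamma_1^{-1}}\bigr]\,dt.
\]
I then add and subtract the projected drift $\hat F^\vep(t,x)=\int f_1(x,y)\,d\bar\rho^\vep_{t,x}(y)$. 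Because $\hat F^\vep$, $F^\vep$ and $\gamma_1$ depend on $x$ alone and $\hat F^\vep$ is the $\bar\rho^\vep_{t,x}$-conditional mean of $f_1$, the cross term vanishes by orthogonality in $L^2(\bar\rho^\vep_{t,x})$, producing the Pythagorean split into $\E_{\rho^\vep_t}[|f_1-\hat F^\vep|^2_{\gamma_1^{-1}}]$ and $\E_{\rho^\vep_t}[|\hat F^\vep-F^\vep|^2_{\gamma_1^{-1}}]$. The first is controlled by $\mathrm{Var}^\vep(f_1)/\lambda_{\min}(\gamma_1)$ straight from Assumption (2), giving the term $T\,\mathrm{Var}^\vep(f_1)/(2\lambda_{\min}(\gamma_1))$.

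For the second term I estimate $|\hat F^\vep(t,x)-F^\vep(x)|\le\kappa_{\Wasser}\,\Wasser_2(\bar\rho^\vep_{t,x},\bar\mu^\vep_x)$ via Kantorovich-Rubinstein duality and the Lipschitz bound, then chain the Talagrand and Log-Sobolev inequalities of Definition~\ref{def:LSI} to get $|\hat F^\vep-F^\vep|^2\le\kappa_{\Wasser}^2(\alpha^\vep_{\TI}\alpha^\vep_{\LSI})^{-1}\RF(\bar\rho^\vep_{t,x}|\bar\mu^\vep_x)$. Integrating against $\hat\rho^\vep_t$ and using the identity $\nabla_y\log(d\bar\rho^\vep_{t,x}/d\bar\mu^\vep_x)=\nabla_y\log(\rho^\vep_t/\mu^\vep)$ converts the averaged conditional Fisher information into the plain $y$-Fisher information of the full law. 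The crucial step, and the one supplying the extra factor $\vep$, is then to feed this into~\eqref{eq:eps-EDI}: since the fast dissipation carries the prefactor $1/\vep$, bounding $|\nabla_y\log(\rho^\vep_t/\mu^\vep)|^2$ by $\lambda_{\min}(\gamma_2)^{-1}|\nabla_y\log(\rho^\vep_t/\mu^\vep)|^2_{\gamma_2}$ and integrating in time yields $\int_0^T\!\int|\nabla_y\log(\rho^\vep_t/\mu^\vep)|^2\,d\rho^\vep_t\,dt\le\vep\,\lambda_{\min}(\gamma_2)^{-1}\bigl(\RelEnt(\rho_0|\mu^\vep)-\RelEnt(\rho^\vep_T|\mu^\vep)\bigr)$. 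Discarding the nonnegative terminal entropy and collecting the constants gives~\eqref{eq:relent_path_eps}.

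The main obstacles are bookkeeping rather than conceptual: one must check that the auxiliary coupling makes the fast components identical so that only $\gamma_1$ appears in the Girsanov density, which uses Assumption (1) in an essential way (unlike the time-marginal estimates), and one must confirm that the cross term genuinely vanishes, which again relies on $\gamma_1$ being constant along level sets. The remaining delicate point is the correct pairing of the plain $y$-Fisher information with the $\gamma_2$-weighted dissipation, so that the $1/\vep$ in~\eqref{eq:eps-EDI} is converted into the advertised $\vep$ on the final term; everything else follows the template of Theorem~\ref{thm:patherror}.
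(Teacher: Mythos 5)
Your proposal is correct and takes essentially the same route as the paper: the paper's own proof of Proposition~\ref{prop:eps-path} is a one-line reduction to Theorem~\ref{thm:patherror} ("the same as before, with $\lambda_{\min}(\gamma_2)$ replaced by $\lambda_{\min}(\gamma_2)/\vep$"), and your argument is precisely that reduction written out — Girsanov on the fully coupled auxiliary system, data processing, the Pythagorean split around $\hat F^\vep$, and the conversion of the $1/\vep$-weighted fast dissipation in~\eqref{eq:eps-EDI} into the factor $\vep$. Your explicit treatment of the vanishing cross term and of the role of Assumption (1) fills in steps the paper leaves implicit, and the only blemish you inherit from the paper itself is the constant (a careful accounting of the Talagrand--LSI chain gives $2$ rather than $4$ in the denominator of the last term).
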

The proof is the same as before, with $\lambda_{\min}(\gamma_2)$ being replaced by $\lambda_{\min}(\gamma_2)/\vep$.
 
It should be noted that all the estimates in this section have a relative entropy term $\RelEnt(\rho_0|\mu^\vep)$. We have intrinsically assumed that the initial data for the original dynamics $\rho_0=\mathrm{law}(Z_0)$ well-prepared with respect to $\mu^\vep$ for each fixed $\varepsilon>0$, i.e.\  $\mathcal H(\rho_0|\mu^\varepsilon)<C$ uniformly in $\varepsilon$, and therefore these estimates are well defined.

\subsection{Reversible diffusions}\label{sec:eps-Rev}
In this section we consider the case of reversible diffusions with diagonal diffusion matrix. As mentioned in the introduction, this subclass has two distinct features. Firstly, the fact that the invariant measure, and hence the corresponding LSI constant, are independent of $\vep$, which makes the asymptotic analysis considerably simpler. Secondly, in our setting, the dynamics derived via averaging (see Theorem \ref{thm:aver}) and the conditional expectation approach (see \eqref{NL-eq:Eff-coeff}) agree since the conditional invariant measure $\bar\mu_x$ is the same as $\mu_x^{\av}$ for any $x\in\R^{n_x}$ (recall Theorem~\ref{thm:aver}). Therefore the  conditional expectation  estimates offer a new insight into proving error estimates for averaging problems. It should be noted that this agreement between the two dynamics is particular to the reversible setting and fails in the non-reversible setting as we shall discuss in the next section. 

To the best of our knowledge, the time-marginal estimates presented below (see~\eqref{eq:vep-RelEntRev},~\eqref{eq:vep-WasserRev}) based on earlier sections, are the first quantitative results for averaging in the general setting when the slow diffusion coefficient depends on the full state space. However this analysis is limited to reversible SDEs.  

Consider the SDE 
\begin{equation}\label{eq:RevSDE}
\begin{aligned}
dX^\vep_t &=  \pra{-(\gamma_1\nabla_x V)(X^\vep_t,Y^\vep_t) + \nabla_x\cdot \gamma _1(X^\vep_t,Y^\vep_t)} dt +  \sqrt{2}\sigma_1(X^\vep_t, Y^\vep_t)\, dW^1_t, \ \  X_{t=0}=X_0, \\
 dY^\vep_t &= \frac{1}{\vep}\pra{-(\gamma_2\nabla_y V)(X^\vep_t,Y^\vep_t) + \nabla_y\cdot \gamma_2(X^\vep_t,Y^\vep_t)} +  \sqrt{\frac{2}{\vep}}\sigma_2(X^\vep_t,Y^\vep_t)\, dW^2_t, \  \ Y_{t=0}=Y_0,
\end{aligned}
\end{equation}
where $V:\R^n\rightarrow\R$, $\gamma_i=\sigma_i\sigma_i^T$ and the rest of the coefficients are as defined earlier. 
Under fairly general growth conditions on $V$,~\eqref{eq:RevSDE} admits the  Boltzmann-Gibbs measure 
\[ d\mu(x,y)=Z^{-1}e^{-V(x,y)}dx\,dy,\] 
as an invariant measure, where $Z=\int_{\R^n}e^{-V(z)}dz$ is the normalisation constant. 
The projected dynamics $\hat\rho_t=\law(X_t)$ evolves according to~\eqref{eq:eps-Proj} with the coefficients given by
\begin{align*}
\hat F^\vep(t,x) = \int_{\R^{n_y}} \pra{\gamma_1\nabla_x V-\nabla_x\cdot \gamma_1}(x,y) \, d\bar\rho^\vep_{t,x}(y), \quad \hat \Gamma^\vep(t,x) = \int_{\R^{n_y}} \gamma_1(x,y) \, d\bar\rho^\vep_{t,x}(y).
\end{align*}
Since the invariant measure $\mu$, and therefore the conditional invariant measure $\bar\mu_x$, are independent of $\vep$, the same holds for the  effective dynamics given by,
\begin{equation*}
\partial_t\eta= \nabla_x\cdot(F\eta)+ \nabla_x^2:\Gamma\eta, 
\end{equation*}
with coefficients
\begin{equation*}
 F(x) = \int_{\R^{n_y}} \pra{\gamma_1\nabla_x V-\nabla_x\cdot \gamma_1}(x,y) \, d\bar\mu_{x}(y), \quad  \Gamma(x) = \int_{\R^{n_y}} \gamma_1(x,y) \, d\bar\mu_{x}(y).
\end{equation*}
Note that in this case $\bar\mu_x=\mu_x^{\av}$ for any $x\in\R^{n_x}$, where $\mu_x^{\av}$ is defined in Theorem~\ref{thm:aver}.  

The relative entropy result~\eqref{eq:eps-RelEnt} in this setting is
\begin{align}\label{eq:vep-RelEntRev}
\RelEnt(\hat\rho^\vep_t|\eta_t)\leq \RelEnt(\hat\rho_0|\eta_0)  + \bra{C_{\RelEnt}+\vep D_{\RelEnt} }\RelEnt(\rho_0|\mu),
\end{align}
where we have used $\RelEnt(\rho_t|\mu)\leq \RelEnt(\rho_0|\mu)$ (which is a consequence of the entropy-dissipation result~\eqref{eq:eps-EDI}), and the $\vep$-independent constants are given by 
\begin{align*}
C_{\RelEnt} = \dfrac{\left\| |\nabla_y \pra{\Gamma^{-1/2} (\gamma_1-\Gamma)}|_F\right\|^2_{L^\infty(\R^n)}}{\lambda_{\min}(\gamma_1)}  \  \text{ and } \ D_{\RelEnt} = \dfrac{ \left\||\Gamma^{1/2}\nabla_x V|\right\|^2_{L^\infty(\R^n)}}{\alpha_{\TI}\alpha_{\LSI}\lambda_{\min}(\gamma_2)}.
\end{align*}
The Wasserstein result~\eqref{eq:eps-Wasser} in this setting is
\begin{align}\label{eq:vep-WasserRev}
\Wasser^2_2(\hat\rho^\vep_t,\eta_t)\leq e^{C_{\Wasser}t}\Wasser^2_2(\hat\rho_0,\eta_0)+\vep D_{\Wasser} e^{C_{\Wasser}t} \RelEnt(\rho_0|\mu),  
\end{align}
where the $\vep$-independent constants are given by
\begin{align*}
C_{\Wasser}&=1+\max\{ 2\||\nabla_x \sqrt{\Gamma}|_F\|^2_{L^\infty(\R^n)},\||\nabla_x F|\|_{L^\infty(\R^n)}\}, \\ 
D_{\Wasser} &= \dfrac{\left\||\nabla_y\pra{-\gamma_1\nabla_x V + \nabla_x\cdot \gamma_1}|\right\|^2_{{L^\infty(\R^n)}}+\left\||\nabla_y\sqrt{\gamma_1}|_F\right\|^2_{{L^\infty(\R^n)}}}{\alpha_{\TI}\alpha_{\LSI}\lambda_{\min}(\gamma_1)}.
\end{align*}
The relative entropy estimate~\eqref{eq:vep-RelEntRev} is sharp only if $C_{\RelEnt}=0$ which corresponds to the case $\gamma_1(x,y)=\Gamma(x)$, i.e.\ the diffusion matrix of the slow variable is only a function of the slow variable $\gamma_1=\gamma_1(x)$ (recall Remark~\ref{rem:SlowDiff}). This issue does not appear in the Wasserstein estimate~\eqref{eq:vep-WasserRev}, which is sharp in the limit of $\vep\rightarrow 0$. However the relative entropy estimate has a better behaviour in time since the relative entropy estimate applies for any $t>0$, while the Wasserstein estimate holds for $t\in [0,T]$ (for any fixed $T>0$) due to the exponential pre-factor. 

\subsection{Non-reversible linear diffusions}\label{sec:eps-Lin}
The introductory example in Section \ref{sec:keyobs} already demonstrated that the coarse-graining method of averaging (see Theorem \ref{thm:aver}) and the effective dynamics derived by conditional expectations (see \eqref{NL-eq:Eff-coeff}) can give different results for finite $\vep$. In this section we state sufficient conditions under which the conditional measure $\bar \mu_x^\vep$ converges to the averaging measure $\mu^{\av}$ in the setting of linear diffusions (see Proposition \ref{prop:LinStatMeas-Limit}), which leads to the convergence of the corresponding dynamics (see  Remark \ref{rem:efftoav} below). The conditions allow for a degenerate diffusion matrix which has been excluded in all the preceding results. Before we state these conditions, in the following remark we present a simple example where the dynamics derived from averaging and conditional expectation do not agree even in the limit $\vep \to 0$.

\begin{remark}\label{rem:aver-neq-ce-ex}
Consider the following two-dimensional example
\begin{equation} \label{eq:guid_ex}
\begin{aligned}
dX_t &= (- X_t + Y_t)\, dt,   \quad X_0 = x_0\,,    \\
  dY_t\, &= - \frac{1}{\varepsilon} Y_t\, dt + \frac{1}{\sqrt{\varepsilon}} \,dW^2_t,  \quad Y_0 = y_0\,.
 \end{aligned}
  \end{equation}
 The invariant measure of the fast process (which is independent of $X_t$)  is  $\mu^{\av} = \mathcal{N}\left(0,\frac{1}{2}\right),$ and thus the averaging principle yields
 \begin{equation*} 
 d X^{\av}_t = - X^{\av}_t \, dt \,,\quad X_0^{\av} = x_0.  
 \end{equation*}
 The process $(X_t,Y_t)_{t \geq 0}$ admits the unique invariant measure $\mu^\vep = \mathcal{N}(0, K^\vep),$ where the covariance is \[K^\vep = \begin{pmatrix} \frac{\varepsilon }{2(1+\varepsilon)} & \frac{\varepsilon}{2(1+\varepsilon)}\\ \frac{\varepsilon}{2(1+\varepsilon)} & \frac{1}{2} \end{pmatrix} \,.\] 
Hence, the conditional invariant measure reads $\bar\mu^\vep_x = \mathcal{N}\left( x,\frac{1}{2} - \frac{\varepsilon}{2(\varepsilon +1)} \right)\,,$ and  the effective dynamics are given by
\begin{equation*}
d\bar X_t = 0 \, dt,  \, \quad \bar X_0 = x_0 \,.
\end{equation*}
This simple example shows that the two approaches of averaging and conditional expectation need not agree even in the limit when dealing with non-reversible diffusions. 
\end{remark}

In this section we consider linear diffusions which in matrix form can be written as  
\begin{align}\label{eq:LinSDE}
\begin{pmatrix}dX_t^\vep \\ dY_t^\vep \end{pmatrix} = \underbrace{\begin{pmatrix} B_{11} & B_{12} \\ \frac{1}{\vep} B_{21} & \frac{1}{\vep} B_{22}\end{pmatrix}}_{=: B^\vep} \begin{pmatrix} X^\vep_t \\ Y^\vep_t\end{pmatrix}dt +  
\underbrace{\begin{pmatrix} A_{11} & 0 \\ 0 & \frac{1}{\sqrt{\vep}} A_{22}  \end{pmatrix}}_{=:A^\vep} \begin{pmatrix}dW^1_t\\dW^2_t\end{pmatrix},  
\end{align}
where the constant matrices $B_{11}\in \R^{n_x\times n_x}$, $B_{12}\in \R^{n_x\times n_y}$, $B_{21}\in \R^{n_y\times n_x}$, $B_{22}\in\R^{n_y\times n_y}$, $A_{11}\in \R^{n_x\times n_x}$, $A_{22}\in\R^{n_y\times n_y}$, and $W^1_t,W^2_t$ are standard Brownian motions in $\R^{ n_x}$ and $\R^{ n_y}$ respectively. 
The construction of the effective dynamics requires the existence of an invariant measure for~\eqref{eq:LinSDE}, which is ensured by the following assumptions (note that $A^\vep$ need not be positive definite).  
\begin{assume}\label{ass:Lin-Stat}
Throughout this section we assume that for any $\vep>0$
\begin{enumerate}[topsep=0pt]
\item The matrix $B^\vep\in\R^{n\times n}$ is Hurwitz, i.e.\ its spectrum lies in the open left-plane.
\item The pair $(B^\vep,A^\vep)$ is controllable, i.e.\  $\mathrm{rank}(A^\vep, B^\vep A^\vep, (B^\vep)^2 A^\vep,\ldots,(B^\vep)^{n-1}A^\vep)=n$. 
\end{enumerate}
\end{assume}
Then for any $\vep>0$ the SDE~\eqref{eq:LinSDE} admits a unique  invariant measure given by a Gaussian $\mu^\vep\in\mathcal P(\R^n)$ (see~\cite[Theorem 3.1]{arnold2014sharp})
\begin{equation*}
\mu^\vep\sim \mathcal N(0,K^\vep).
\end{equation*}
The covariance $K^\vep$ is the unique positive definite solution of the Lyapunov equation
\begin{equation*}
B^\vep K^\vep + K^\vep(B^\vep)^T = -A^\vep(A^\vep)^T. 
\end{equation*}
As before, the CG map $\xi:\R^n\rightarrow\R^{n_x}$ is the coordinate projection onto the slow variable $X_t$, and the conditional invariant measure for any $x\in\R^{n_x}$ is also a normal distribution given by (see e.g.~\cite{Flury97})
\begin{align}\label{eq:cond-inv-meas}
\bar\mu_x^\vep\sim \mathcal N \bra{m^c_x(\vep), K^c(\vep)}, \quad  m^c_x(\vep) = K^\vep_{21}(K^\vep_{11})^{-1}x, \ K^c(\vep)= K^\vep_{22}-K^\vep_{21}(K^\vep_{11})^{-1}K^\vep_{12}.
\end{align}
The coefficients of the projected dynamics $\hat\rho^\vep_t=\law(X^\vep_t)$ (see~\eqref{eq:eps-Proj}) and the effective dynamics $\eta^\vep_t$ (see~\eqref{eq:eps-EffGen}), in this setting are $\hat \Gamma^\vep = \Gamma^\vep =  A_{11}A_{11}^T$ and 
\begin{align*}
\hat F^\vep(t,x) = \int_{\R^{n_y}} \pra{ B_{11}x+ B_{12}y} \, d\bar\rho^\vep_{t,x}(y),  \quad F^\vep(x) = \int_{\R^{n_y}} \pra{ B_{11}x+ B_{12}y} \, d\bar\mu^\vep_{x}(y)= (B_{11}+B_{12}K^\vep_{21}(K^\vep_{11})^{-1})x. 
\end{align*}
The relative entropy estimate~\eqref{eq:eps-RelEnt} and the Wasserstein estimate~\eqref{eq:eps-Wasser}, which hold under the additional assumption that $A^\vep$ is positive definite, lead to 
\begin{align}
&\RelEnt(\hat\rho^\vep_t|\eta^\vep_t)\leq \RelEnt(\hat\rho_0|\eta_0)  + \vep \frac{D_\RelEnt}{\alpha^\vep_{\TI}\alpha^\vep_{\LSI}}\RelEnt(\rho_0|\mu^\vep),   \text{ with }   D_\RelEnt=\frac{|(A_{11}A_{11}^T)^{-1/2}B_{12}|^2}{\lambda_{\min}(A_{22}A_{22}^T)}, \label{eq:relent_OU}\\
&\Wasser^2_2(\hat\rho^\vep_t,\eta^\vep_t)\leq e^{C^\vep_{\Wasser}t}\bra{\Wasser^2_2(\hat\rho_0,\eta_0)+\vep\frac{D_{\Wasser} }{\alpha^\vep_{\TI}\alpha^\vep_{\LSI}}\RelEnt(\rho_0|\mu^\vep) },   \text{ with}  \  C^\vep_{\Wasser}= |B_{11}+B_{12}K^\vep_{21}(K^\vep_{11})^{-1}|, \label{eq:wasser_OU}\\
&\hspace{9.45cm}
D_{\Wasser}= \frac{|B_{12}|^2}{\lambda_{\min}(A_{22}A_{22}^T)},\nonumber 
\end{align}
and the estimate for the law of paths for fixed $T>0$, with $\hat \rho^\vep = law(\left(X^\vep\right)_{0 \leq t \leq T}), \, \hat \nu = law(\left(\bar X^\vep\right)_{0 \leq t \leq T})$ is
\begin{align}
\RelEnt(\hat\rho^\vep| \hat \nu ) \leq  \RelEnt(\rho_0|\nu_0) + \ T C^\vep_{\mathcal{LP}} + \vep \frac{D_{\mathcal{LP}}}{ \alpha^\vep_{\TI}\alpha^\vep_{\LSI}}\RelEnt(\rho_0|\mu^\vep), \text{ with } &C^\vep_{\mathcal{LP}}= \frac{\mathrm{Var}_{\bar\rho^\vep_{t,x}}(B_{12}y)}{2\lambda_{\min}(A_{11}A_{11}^T)}, \label{eq:relent_path_OU} \\
&D_{\mathcal{LP}}= \frac{|B_{12}|^2}{4 \lambda_{\min}(A_{11}A_{11}^T) \lambda_{\min}(A_{22}A_{22}^T)}. \notag
\end{align}
 It may happen that $\mu^\vep$ becomes singular as $\vep \to 0$, for example, in the system considered in Remark \ref{rem:aver-neq-ce-ex}. In this case the initial datum $\rho_0$ should be adapted such that $\RelEnt(\rho_0|\mu^\vep)<C$, where $C$ is independent of $\vep$. This can be achieved, for instance, by requiring that the initial datum $\rho^\vep_0(x,y) = \hat \rho^\vep_0(x)\bar \rho^\vep_{x,0}(y)$, depends on $\vep$ in such a way that $\hat \rho^{\vep=0}_0(x) = \delta_0$, i.e. the initial condition for $X$ has to be chosen as $X_0=0$. 

We now discuss the asymptotic behaviour of the right hand side of these estimates. First we give a formal asymptotic proof for the $\vep\rightarrow0$ limit of the conditional invariant measure $\bar\mu_x^\vep$. This will be useful in determining the limit of the effective dynamics.
\begin{prop}\label{prop:LinStatMeas-Limit}
In addition to Assumption~\ref{ass:Lin-Stat}, assume that 
\begin{enumerate}[topsep=0pt]
\item The matrix $B_{22}\in\R^{n_y\times n_y}$ is Hurwitz, i.e.\ its spectrum lies in the open left-plane.
\item The pair $(B_{22},A_{22})$ is controllable, i.e.\  $\mathrm{rank}(A_{22}, B_{22}A_{22}, B^2_{22} A_{22},\ldots,B^{n_y-1}_{22}A_{22})=n_y$. 
\item $B_{11} - B_{12}B_{22}^{-1}B_{21}$ is Hurwitz and the pair $( \left(B_{11} - B_{12}B_{22}^{-1}B_{21}\right),A_{11})$ is controllable. 
\end{enumerate}
Then for any $x\in\R^{n_x}$ the conditional invariant measure $\bar\mu^\vep_x\xrightarrow{\vep\rightarrow 0}\mathcal N(-B_{22}^{-1}B_{21}x, \Sigma)$, where the limiting covariance is the unique positive-definite solution to the Lyapunov equation
\begin{equation} \label{eq:lyapunov_av}
B_{22}\Sigma + \Sigma B_{22}^T = -A_{22}A_{22}^T. 
\end{equation}
\end{prop}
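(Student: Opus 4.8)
The plan is to reduce everything to the block structure of the Lyapunov equation $B^\vep K^\vep + K^\vep(B^\vep)^T = -A^\vep(A^\vep)^T$ and to pass to the limit $\vep\to 0$ block by block. Writing $K^\vep$ in blocks $K^\vep_{ij}$ (with $K^\vep_{21}=(K^\vep_{12})^T$ by symmetry) and inserting the explicit forms of $B^\vep,A^\vep$ from~\eqref{eq:LinSDE}, the equation splits into three independent block identities; after clearing the $1/\vep$ prefactors they read
\begin{align*}
&B_{11}K^\vep_{11} + K^\vep_{11}B_{11}^T + B_{12}K^\vep_{21} + K^\vep_{12}B_{12}^T = -A_{11}A_{11}^T, \\
&B_{21}K^\vep_{11} + B_{22}K^\vep_{21} + \vep\bra{K^\vep_{21}B_{11}^T + K^\vep_{22}B_{12}^T} = 0, \\
&B_{21}K^\vep_{12} + K^\vep_{21}B_{21}^T + B_{22}K^\vep_{22} + K^\vep_{22}B_{22}^T = -A_{22}A_{22}^T.
\end{align*}
Granting for the moment that the blocks stay bounded and converge as $\vep\to 0$ (this is the delicate point, addressed last), I denote their limits by $K^{(0)}_{ij}$.

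The first genuine step is the cross block. Passing to the limit in the second identity kills the $\vep$-term and gives $B_{21}K^{(0)}_{11} + B_{22}K^{(0)}_{21} = 0$; since $B_{22}$ is invertible (being Hurwitz by hypothesis~1) this yields $K^{(0)}_{21} = -B_{22}^{-1}B_{21}K^{(0)}_{11}$. Substituting this, and $K^{(0)}_{12}=(K^{(0)}_{21})^T$, into the limit of the first identity collapses it to the reduced Lyapunov equation $\tilde B K^{(0)}_{11} + K^{(0)}_{11}\tilde B^T = -A_{11}A_{11}^T$ with $\tilde B := B_{11}-B_{12}B_{22}^{-1}B_{21}$, exactly the Schur-complement drift appearing in hypothesis~3. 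By that hypothesis ($\tilde B$ Hurwitz, $(\tilde B,A_{11})$ controllable), this equation has a unique positive-definite solution, so in particular $K^{(0)}_{11}$ is invertible. The limit of the conditional mean coefficient is then immediate: $K^\vep_{21}(K^\vep_{11})^{-1}\to K^{(0)}_{21}(K^{(0)}_{11})^{-1} = -B_{22}^{-1}B_{21}$, which is the claimed mean.

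For the covariance I would compute the limit of the Schur complement $K^c(\vep)=K^\vep_{22}-K^\vep_{21}(K^\vep_{11})^{-1}K^\vep_{12}$. Writing $K^{c,(0)}$ for its limit and using $K^{(0)}_{21}=-B_{22}^{-1}B_{21}K^{(0)}_{11}$ one finds $K^{(0)}_{21}(K^{(0)}_{11})^{-1}K^{(0)}_{12}=B_{22}^{-1}B_{21}K^{(0)}_{11}B_{21}^TB_{22}^{-T}$. Forming $B_{22}K^{c,(0)}+K^{c,(0)}B_{22}^T$ and inserting the limit of the third block identity (which expresses $B_{22}K^{(0)}_{22}+K^{(0)}_{22}B_{22}^T$ in terms of $-A_{22}A_{22}^T$ and the same $K^{(0)}_{11}$-cross terms), a short computation shows that all $K^{(0)}_{11}$-dependent terms cancel and only $-A_{22}A_{22}^T$ survives. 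Hence $K^{c,(0)}$ solves~\eqref{eq:lyapunov_av}, and by hypotheses~1 and~2 ($B_{22}$ Hurwitz, $(B_{22},A_{22})$ controllable) its solution is unique and positive definite, so $K^{c,(0)}=\Sigma$. Together with the mean limit this gives $\bar\mu^\vep_x\to\mathcal N(-B_{22}^{-1}B_{21}x,\Sigma)$.

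The main obstacle is exactly the convergence I postponed: justifying that the blocks of $K^\vep$ remain bounded and admit limits, rather than degenerating at an unexpected rate. The example of Remark~\ref{rem:aver-neq-ce-ex}, where $K^\vep_{11}=O(\vep)$, shows that this can genuinely fail, and it fails precisely when the controllability part of hypothesis~3 is dropped, so the three hypotheses are doing real work here. A clean way to secure convergence is to posit a regular expansion $K^\vep_{ij}=K^{(0)}_{ij}+\vep K^{(1)}_{ij}+\dots$, check that the $O(1)$ system above is solvable (which is what the Lyapunov equation for $\tilde B$ guarantees via hypothesis~3) and then verify self-consistency of the expansion order by order; the remaining manipulations are routine linear algebra. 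This is why the statement is phrased as a formal asymptotic result.
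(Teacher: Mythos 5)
Your proposal is correct and follows essentially the same route as the paper's own proof: both split the Lyapunov equation for $K^\vep$ into block identities, use the $O(1/\vep)$ (respectively limiting) relation to get $K^{(0)}_{21}=-B_{22}^{-1}B_{21}K^{(0)}_{11}$, deduce the Schur-complement Lyapunov equation $\tilde B K^{(0)}_{11}+K^{(0)}_{11}\tilde B^T=-A_{11}A_{11}^T$ so that hypothesis~3 gives invertibility of $K^{(0)}_{11}$ and hence the limit of the conditional mean, and then show the limiting Schur complement $K^{c,(0)}$ solves \eqref{eq:lyapunov_av}. The only cosmetic differences are that you treat $B_{21}=0$ and $B_{21}\neq 0$ uniformly (the paper's case split is not mathematically necessary) and that, like the paper, you leave the justification of the regular expansion of $K^\vep$ at the formal level, which is exactly how the paper presents it.
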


\begin{remark}\label{rem:asspropLin}
 The final assumption guarantees that $K_{11}^{(0)} > 0 $ (see equation \eqref{eq:K11_invertible} and \eqref{eq:K11_invertible2} below). 
 The first and second assumption guarantees that $K^{(0)}_{22} -  K_{21}^{(0)} (K_{11}^{(0)})^{-1} K_{12}^{(0)} >0 $. Hence, together they guarantee $K^0 = \lim\limits_{\vep \to 0} K^\vep >0.$
 
Note that the example presented in Remark~\ref{rem:aver-neq-ce-ex} violates the assumptions above since  $B_{21} = 0$ and $A_{11}=0$. 
In particular by \eqref{eq:sigma12_eps_exp_1} and \eqref{eq:lyap_3} it follows that $K_{11}^{(0)}=K_{12}^{(0)}=0$ which leads to a non-trivial change of the conditional mean $m_x^c(\vep) = \Bigl(K_{21}^{(1)}\left(K_{11}^{(1)}\right)^{-1} + \mathcal{O}(\vep) \Bigr) x$ due to the order $\vep$ terms in the covariance.
\end{remark}

\begin{proof}[Proof of Proposition~\ref{prop:LinStatMeas-Limit}]
Writing out the equation for $K^\vep$ in its different components yields the following three equations
\begin{align}
 B_{11} K^\vep_{11} + B_{12} K^\vep_{21} + \ \ K^\vep_{11} B_{11}^T + K^\vep_{12} B_{12}^T \ &= -  A_{11}A_{11}^T \label{eq:lyap_1} \\
 B_{11} K^\vep_{12} + B_{12} K^\vep_{22} + \frac{1}{\vep}\left(K^\vep_{11} B_{21}^T + K^\vep_{12} B_{22}^T\right) &= \ 0 \label{eq:lyap_2}\\
 B_{21} K^\vep_{12} + B_{22} K^\vep_{22} + \  \ K^\vep_{21}B_{21}^T + K^\vep_{22}B_{22}^T \ &= - A_{22}A_{22}^T \, . \label{eq:lyap_3} 
\end{align}
Due to the structure of these equations, we consider the following asymptotic expansion of $K^\vep$,
\begin{equation*}
K^\vep = K^{(0)}  + \vep\, K^{(1)} + \mathcal{O}(\vep^2).
\end{equation*}
Collecting the $1/\vep$ power terms in \eqref{eq:lyap_2} we find that
\begin{equation}
 K_{12}^{(0)} = - K_{11}^{(0)}B_{21}^T B_{22}^{-T} \,. \label{eq:sigma12_eps_exp_1}
\end{equation}
 \paragraph{Case 1 $B_{21}\neq 0$:}
Plugging \eqref{eq:sigma12_eps_exp_1} into \eqref{eq:lyap_1} and collecting the order 1 terms we find that
\begin{align} \label{eq:K11_invertible}
 \left(B_{11} -B_{12}B_{22}^{-1}B_{21}\right)K^{(0)}_{11}  + \ \ K^{(0)}_{11} \left(B_{11} - B_{12}B_{22}^{-1}B_{21}\right)^T \ &= -A_{11}A_{11}^T 
\end{align}
This, together with the assumption that $( \left(B_{11} - B_{12}B_{22}^{-1}B_{21}\right),A_{11})$ is controllable and $ \left(B_{11} - B_{12}B_{22}^{-1}B_{21}\right)$ being Hurwitz, implies that $K_{11}^{(0)}$ is invertible (cf. \cite[Theorem 1.2]{zabczyk2009mathematical}). 
Thus
\begin{equation} \label{eq:invK11}
(K_{11}^\vep)^{-1} = (K_{11}^{(0)} + \vep K^{(1)}_{11} + \mathcal{O}(\vep^2))^{-1} = (K_{11}^{(0)})^{-1} - \vep (K_{11}^{(0)})^{-1}(K^{(1)}_{11}) (K_{11}^{(0)})^{-1} + \mathcal{O}(\vep^{2}).
\end{equation}
As a result, combining \eqref{eq:sigma12_eps_exp_1}-\eqref{eq:invK11}, we find for the conditional mean
\begin{equation*}
m^c_x(\vep) = K^\vep_{21}(K_{11}^\vep)^{-1}x = \left(- B_{22}^{-1}B_{21} + \mathcal{O}(\vep) \right) x, 
\end{equation*} 
and therefore it follows that $m^c_x(\vep) \to -B_{22}^{-1}B_{21}x $ as $\vep \to 0$.
By \eqref{eq:invK11} the conditional variance can be written as
\begin{equation*}
K^c(\vep)  = K^{(0)}_{22} -  K_{21}^{(0)} (K_{11}^{(0)})^{-1} K_{12}^{(0)} + \mathcal{O}(\vep).
\end{equation*}  
At the same time, using \eqref{eq:sigma12_eps_exp_1}, the order 1 term equation of \eqref{eq:lyap_3} reads
\[B_{22}(K^{(0)}_{22} -  K_{21}^{(0)} (K_{11}^{(0)})^{-1} K_{12}^{(0)})  +  (K^{(0)}_{22} -  K_{21}^{(0)} (K_{11}^{(0)})^{-1} K_{12}^{(0)}) B_{22}^T = -A_{22}A_{22}^T ,\]
i.e., $K^c(\vep) \to \Sigma$ which solves \eqref{eq:lyapunov_av}. 
\paragraph{Case 2 $B_{21}= 0$:}
By \eqref{eq:sigma12_eps_exp_1} $B_{21}=0$ implies $K^{(0)}_{12} = 0.$ Collecting the order 1 terms of \eqref{eq:lyap_1} thus yields
\begin{equation} \label{eq:K11_invertible2}
B_{11}K^{(0)}_{11}  +  \ K^{(0)}_{11}B_{11} ^T  = -A_{11}A_{11}^T 
\end{equation}
which implies by our assumptions that $K^{(0)}_{11}$ is invertible and we get the same representation for its inverse as in \eqref{eq:invK11}. Hence the conditional mean reads
\begin{equation*}
m^c_x(\vep) = K^\vep_{21}(K_{11}^\vep)^{-1}x = \vep \left(K^{(1)}_{21} \left(K_{11}^{(0)}\right)^{-1} + \mathcal{O}(\vep) \right) x,
\end{equation*}
which goes to $0$ as $\vep \to 0$ and thus agrees with the postulated mean. For the covariance we have that $K^c(\vep) = K^{(0)}_{22} + \mathcal{O}(\vep)$ and collecting the order one 1 terms of \eqref{eq:lyap_3} we find
\begin{equation*}
B_{22} K^{(0)}_{22}+ K^{(0)}_{22}B_{22}^T = - A_{22}A_{22}^T,
\end{equation*}
i.e., also the claim for the limit of the covariance holds true.
\end{proof}

In the following two remarks we summarise the $\vep\rightarrow 0$ behaviour of the objects studied in this section.
\begin{remark}[Effective dynamics converges to averaged dynamics] \label{rem:efftoav}
The assumption that $B_{22}$ is Hurwitz and the pair $(B_{22},A_{22})$ is controllable in  Proposition~\ref{prop:LinStatMeas-Limit}) implies that the fast variable $Y_t^\vep$ is ergodic for a fixed value of the slow variable $X^\vep_t=x$, and therefore using~\cite[Theorem 3.1]{arnold2014sharp} it follows that
\begin{align*}
\mu_x^{\av}\sim \mathcal N(-B_{22}^{-1}B_{21}x,\Sigma),
\end{align*}
where $\mu_x^{\av}\in\mathcal P(\R^{n_y})$ appears in the classical averaging result (see Theorem~\ref{thm:aver}). Therefore as a result of Proposition~\ref{prop:LinStatMeas-Limit}, it follows that $\bar\mu_x^\vep\xrightarrow{\vep\rightarrow 0}\mu_x^{\av}$ for every $x\in\R^{n_x}$, and consequently the effective dynamics converges to the averaged dynamics and the result of Theorem \ref{thm:aver} also applies to the effective dynamics $\bar X_t$. 
\end{remark}
\begin{remark}[$\vep\rightarrow 0$ behaviour of estimates~\eqref{eq:relent_OU}-\eqref{eq:relent_path_OU}]
Since the limiting covariance for the $\mu^\vep$ satisfies $\lim_{\vep\rightarrow 0} K^\vep >0$ (recall Remark~\ref{rem:asspropLin}), it follows that the covariance for $\mu_x^\vep$ is also uniformly bounded away from zero (see~\eqref{eq:U-low-bound}). Therefore by the Bakry-Emery criterion $\alpha_{\LSI}^\vep\rightarrow \alpha$ as $\vep\rightarrow 0$. for some constant $\alpha>0$ independent of $\vep$. Since $0\leq \alpha_{\LSI}^\vep \leq \alpha_{\TI}^\vep$, we have $\alpha_{\TI}^{-1}\leq \alpha_{LSI}^{-1} \rightarrow \alpha$. This concludes the $\vep\rightarrow 0$ behaviour of the time-marginal relative entropy estimate~\eqref{eq:relent_OU}. For the Wasserstein estimate~\eqref{eq:wasser_OU}, we note that $C^\vep_{\Wasser}$ stays bounded as $\vep\rightarrow 0$ by using Remark~\ref{behav-D} and noting that in this case $|\nabla_{xy}^2\log\mu^\vep| \in L^\infty(\R^n)$ (again using the bounds on the covariance). For the law of path estimates we need to bound $C^\vep_{\mathcal LP}$. Since $B_{12}$ is a constant matrix, we only need to consider $Var_{\bar\rho^\vep_{t,x}}(y)= \mathrm{tr}(K^c_t(\vep))$ where $K^c_t(\vep)= (K^\vep_t)_{22} - (K^\vep_t)_{21}((K^\vep_t)_{11})^{-1}(K^\vep_t)_{12}$, which is  well-defined  since the time-dependent covariance for $\rho^\vep_t$ is well-defined. The latter follows because of the bounds on the limiting covariance (see~\cite[Ex.\ 2.14]{NeureitherThesis19} for an explicit formula of the time-dependent covariance for $\rho^\vep_t$). 
\end{remark}

An interesting property of linear diffusions is that the LSI constant can directly be connected to the coefficients of the system, a feature that we discuss below. While this bound is new, it does not offer any additional control in the presence of scale separation, and therefore we present the result in the absence of scale separation.
\begin{prop}\label{rem:LinLSI}
Consider the linear SDE
\begin{equation}\label{rem-LinSAE}
dZ_t=B Z_t\, dt + A\, dW_t,
\end{equation}
where $Z_t=(X_t,Y_t)$ with $X_t\in\R^{n_x}$, $Y_t\in\R^{n_y}$, $n=n_x+n_y$, and  $W_t$ is a $m$-dimensional Brownian motion. Assume that the matrix  $B\in\R^{n\times n}$ is Hurwitz and and the pair $(B,A)$ is controllable with $A\in\R^{n\times m}$. Furthermore the eigenvector $v$ associated to the largest eigenvalue $\lambda_{\max}(K)$ satisfies  $v\not\in\mathrm{ker}(AA^T)$. 
Then 
\begin{equation*}
\alpha_{\LSI} \geq \frac{\lambda_{\min}(B+B^T)}{\lambda_{\max}(AA^T)}.
\end{equation*}
\end{prop}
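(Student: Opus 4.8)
The plan is to reduce the statement to a purely Gaussian computation and then exploit the Lyapunov equation. First I would recall that, under Assumption~\ref{ass:Lin-Stat}, the SDE~\eqref{rem-LinSAE} has the unique invariant measure $\mu=\mathcal N(0,K)$, where $K$ is the unique positive-definite solution of $BK+KB^T=-AA^T$. Since the Log-Sobolev constant $\alpha_{\LSI}$ is a property of the measure $\mu$ alone, and not of the (possibly non-reversible) dynamics that leaves it invariant, it suffices to bound the LSI constant of the Gaussian $\mathcal N(0,K)$. Writing $\mu\propto e^{-U}$ with $U(z)=\tfrac12 z^TK^{-1}z$, one has $\nabla^2U=K^{-1}\geq \lambda_{\min}(K^{-1})\,\Id_n=\tfrac{1}{\lambda_{\max}(K)}\Id_n$, so the Bakry--\'Emery criterion (exactly as invoked in Remark~\ref{rem:func-ineq}) immediately gives
\[
\alpha_{\LSI}\geq \frac{1}{\lambda_{\max}(K)}.
\]
Thus the whole problem is reduced to producing an upper bound on $\lambda_{\max}(K)$ in terms of $B$ and $A$.

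Second, I would extract this bound by testing the Lyapunov equation against the top eigenvector of $K$. Let $v$ be a unit eigenvector with $Kv=\lambda_{\max}(K)\,v$. Multiplying $BK+KB^T=-AA^T$ on the left by $v^T$ and on the right by $v$, and using $Kv=\lambda_{\max}(K)v$ together with the symmetry of $K$, collapses the left-hand side to $\lambda_{\max}(K)\,v^T(B+B^T)v$, so that
\[
\lambda_{\max}(K)=\frac{v^TAA^Tv}{-\,v^T(B+B^T)v}.
\]
Here the hypothesis $v\notin\ker(AA^T)$ is exactly what is needed: it forces the numerator $v^TAA^Tv>0$, and since $\lambda_{\max}(K)>0$ this in turn forces the denominator $-v^T(B+B^T)v>0$, so the quotient is well defined and positive. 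Bounding the numerator by $\lambda_{\max}(AA^T)$ and the denominator from below by the Rayleigh estimate $-v^T(B+B^T)v\geq \lambda_{\min}\!\left(-(B+B^T)\right)$ then yields $\lambda_{\max}(K)\leq \lambda_{\max}(AA^T)/\lambda_{\min}\!\left(-(B+B^T)\right)$, and combining with the first step gives
\[
\alpha_{\LSI}\geq \frac{\lambda_{\min}\!\left(-(B+B^T)\right)}{\lambda_{\max}(AA^T)}.
\]

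The main point requiring care is the sign of the symmetric part $B+B^T$. The eigenvector computation naturally produces $\lambda_{\min}\!\left(-(B+B^T)\right)=-\lambda_{\max}(B+B^T)$ in the denominator, and the bound is informative only when this is positive, i.e.\ when $B+B^T$ is negative definite (at least along the relevant direction); otherwise the right-hand side is $\leq 0$ and the inequality is trivial from $\alpha_{\LSI}\geq 0$. I would therefore carry the argument in the form above and identify $\lambda_{\min}\!\left(-(B+B^T)\right)$ as the operative constant. The only remaining subtlety is that Bakry--\'Emery supplies the inequality $\alpha_{\LSI}\geq 1/\lambda_{\max}(K)$ rather than the equality; this is all that is needed, and the sharpness of $1/\lambda_{\max}(K)$ as the optimal Gaussian LSI constant need not be established for the statement.
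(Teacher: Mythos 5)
Your proposal follows essentially the same route as the paper's proof: lower-bound $\alpha_{\LSI}$ by the Bakry--\'Emery constant $1/\lambda_{\max}(K)$ of the Gaussian invariant measure, then test the Lyapunov equation $BK+KB^T=-AA^T$ against the top eigenvector $v$ of $K$ to control $\lambda_{\max}(K)$ in terms of the coefficients. Two remarks are in order.

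First, a small but genuine gap relative to what the paper actually proves: in Section~\ref{sec:eps-Lin} the constant $\alpha_{\LSI}$ in Proposition~\ref{rem:LinLSI} is the Log-Sobolev constant of the \emph{conditional} invariant measure $\bar\mu_x\in\mathcal P(\R^{n_y})$ (this is why the statement bothers to split $Z_t=(X_t,Y_t)$, a splitting your argument never uses), whereas you bound the Log-Sobolev constant of the full Gaussian $\mu=\mathcal N(0,K)$. The bridge is one line, and it is exactly the paper's appeal to~\eqref{eq:U-low-bound}: writing $\mu\propto e^{-U}$ with $U(z)=\tfrac12 z^TK^{-1}z$, the conditional potential has Hessian $\nabla^2_y U_x=(K^{-1})_{22}$, a principal block of $\nabla^2 U=K^{-1}$, so $\nabla^2_y U_x\geq \lambda_{\min}(K^{-1})\,\Id_{n_y}$ and Bakry--\'Emery gives $\alpha_{\LSI}\geq 1/\lambda_{\max}(K)$ for $\bar\mu_x$ as well. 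Once this step is inserted, your Lyapunov computation finishes the proof unchanged.

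Second, your sign bookkeeping is correct and in fact corrects the paper's own write-up. Testing the Lyapunov equation gives $\lambda_{\max}(K)\,v^T(B+B^T)v=-v^TAA^Tv$; the paper drops this minus sign and consequently states the bound with $\lambda_{\min}(B+B^T)$, which for a Hurwitz $B$ can never be positive (indeed $\mathrm{tr}(B+B^T)=2\,\mathrm{tr}(B)<0$), so the inequality as printed is vacuously true. Your version, $\alpha_{\LSI}\geq \lambda_{\min}\!\left(-(B+B^T)\right)/\lambda_{\max}(AA^T)=-\lambda_{\max}(B+B^T)/\lambda_{\max}(AA^T)$, is the meaningful one: it is informative precisely when the symmetric part of $B$ is negative definite. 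You also place the hypothesis $v\notin\mathrm{ker}(AA^T)$ correctly: it forces $v^TAA^Tv>0$, hence $v^T(B+B^T)v<0$, so the quotient expressing $1/\lambda_{\max}(K)$ is well defined and positive, which is the same role this hypothesis plays in the paper.
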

Note that while this bound directly connects the Log-Sobolev constant to the eigenvalues of the coefficients of the full dynamics, it need not be sharp since in general $\lambda_{\min}(B+B^T)$ need not be positive. 
\begin{proof}
Following the assumptions on the coefficients,~\eqref{rem-LinSAE} admits an invariant measure $\mu\sim\mathcal N(0,K)$ where $K$ solves the Lyapunov equation 
\[BK + K B^T=-AA^T.\]
As in the rest of this section we assume $\xi(x,y)=x$. Our aim here is to connect the Log-Sobolev constant $\alpha_{\LSI}$ for the  conditional invariant measure $\bar\mu_x\in\mathcal P(\R^{n_y})$ to the eigenvalues of the coefficients $B,A$ of the full dynamics. We will use $\alpha:=\lambda_{\min}(K^{-1})=1/\lambda_{\max}(K)$. 

The conditional invariant measure is explicitly given by $\bar\mu_x\sim \mathcal N \bra{K_{21}(K_{11})^{-1}x,K^c}$, where $K^c=K_{22}-K_{21}(K_{11})^{-1}K_{12}$ and  we write $K=\left( \begin{smallmatrix} K_{11} & K_{12} \\ K_{21} & K_{22}\end{smallmatrix} \right)$ in the block-matrix form. Since $\bar\mu_x$ is a normal distribution, we have $\alpha_{\LSI}=\lambda_{\min}(K^{-1}_{\mathrm{cond}})$. Using the same idea as in~\eqref{eq:U-low-bound}, it follows that $\alpha_{\LSI} \geq \alpha$. 
Multiplying the Lyapunov equation from the left by $v^T$ and the right by $v$ we find
\begin{align*}
v^T BK v + v^TK B^Tv=-v^TAA^Tv \ \Longleftrightarrow \ \frac{1}{\alpha} v^T (B+B^T)v=v^T AA^T v,  
\end{align*}
where we use that $K^{-1}v=\alpha v \Leftrightarrow K v= \alpha^{-1}v$. This relation implies the required bound
\begin{equation*}
\alpha_{\LSI}\geq \alpha = \frac{v^T (B+B^T)v}{v^T AA^T v}\geq \frac{\lambda_{\min}(B+B^T)}{\lambda_{\max}(AA^T)}.
\end{equation*}
\end{proof}

In the following remark we briefly discuss a different scaling and present the corresponding error estimates. We will use these results and this scaling to explain why the averaging and the effective dynamics do not always agree (recall Remark~\ref{rem:aver-neq-ce-ex}) using the notion of \emph{degree of irreversibility} in Section~\ref{sec:conclusion}. 
\begin{remark}[Different $\vep$-scaling]\label{rem:p-scaling}
Consider the multiscale problem
 \begin{align} \label{eq:LinSDEp}
\begin{pmatrix}dX_t^\vep \\ dY_t^\vep \end{pmatrix} = \underbrace{\begin{pmatrix} B_{11} & B_{12} \\ \frac{1}{\vep} B_{21} & \frac{1}{\vep} B_{22}\end{pmatrix}}_{=: B^\vep} \begin{pmatrix} X^\vep_t \\ Y^\vep_t\end{pmatrix}dt +  
\underbrace{\begin{pmatrix} \sqrt{\vep^p} A_{11} & 0 \\ 0 & \frac{1}{\sqrt{\vep}} A_{22}  \end{pmatrix}}_{=:A^\vep} \begin{pmatrix}dW^1_t\\dW^2_t\end{pmatrix},  
\end{align}
and note that $p=0$ corresponds to the usual averaging problem. The results of Proposition~\ref{prop:LinStatMeas-Limit} as well as the averaging result (see Theorem~\ref{thm:aver}) still hold as long as  $p<1$. The relative entropy estimate~\eqref{eq:eps-RelEnt} and the Wasserstein estimate~\eqref{eq:eps-Wasser}, which hold under the assumption $A^\vep (A^\vep)^T>0$, now give 
\begin{align*}
&\RelEnt(\hat\rho^\vep_t|\eta^\vep_t)\leq \RelEnt(\hat\rho_0|\eta_0)  + \vep^{1-p} \frac{C_\RelEnt}{\alpha^\vep_{\TI}\alpha^\vep_{\LSI}}\RelEnt(\rho_0|\mu^\vep),   \text{with }   C_\RelEnt=\frac{|(A_{11}A_{11}^T)^{-1/2}B_{12}|^2}{\lambda_{\min}(A_{22}A_{22}^T)},\\
&\Wasser^2_2(\hat\rho^\vep_t,\eta^\vep_t)\leq e^{C^\vep_{\Wasser}t}\bra{\Wasser^2_2(\hat\rho_0,\eta_0)+\vep\frac{D_{\Wasser} }{\alpha^\vep_{\TI}\alpha^\vep_{\LSI}}\RelEnt(\rho_0|\mu^\vep) },   \text{ with}  \  C_{\Wasser}= ||B_{11}+B_{12}K^\vep_{21}(K^\vep_{11})^{-1}|, \\
& \hspace{9.45cm}
D_{\Wasser}= \frac{|B_{12}|^2}{\lambda_{\min}(A_{22}A_{22}^T)}.  
\end{align*}
It is interesting to note that the estimate in relative entropy is affected by the change in the $\vep$-scaling (visible in $\vep^{1-p}$) as opposed to the Wasserstein estimate which scales linearly in $\vep$ as in the $p=0$ setting. 
\end{remark}
In this section we have focussed our attention to the setting of linear diffusions since it allows us to derive precise conditions under which the averaging dynamic and the effective dynamics agree. In Section~\ref{sec:conclusion} we conjecture that this agreement also applies in the general setting under related conditions.

\section{Numerical comparision}\label{sec:num}
 In this section we numerically probe the sharpness of the time-marginal relative entropy bound~\eqref{eq:relent_OU} and the path measure bound~\eqref{eq:relent_path_OU}. Furthermore, we compare  averaging to effective dynamics for finite values of $\vep>0$, by computing relative entropy error for the  time-$t$ marginals and the law of paths.  To this end, we consider the following two-dimensional Ornstein-Uhlenbeck process (higher-dimensional averaging examples yield similar behaviour and so, to keep the presentation simple, we only focus on this $2$d example)
\begin{align}
 \begin{pmatrix} dX^\vep_t \\ dY^\vep_t \end{pmatrix} = \begin{pmatrix}-3/2 & 3/4\\ 1/(4\vep) & -3/(2\vep) \end{pmatrix}\begin{pmatrix} X^\vep_t \\ Y^\vep_t \end{pmatrix} \, dt+ \begin{pmatrix} 1 & 0 \\ 0 & 1/\sqrt{\vep} \end{pmatrix} \begin{pmatrix} dW^1_t \\ dW^2_t \end{pmatrix}\,. \label{eq:numex_t}
 \end{align}
The process $(X^\vep_t,Y^\vep_t)$ admits the unique invariant measure $\mu = \mathcal{N}\left(0,\left(\begin{smallmatrix} \frac{2 (6 + 7 \vep)}{33 (1 + \vep)} & \frac{2 (1 + 3 \vep)}{33 (1 + \vep)} \\ \frac{2 (1 + 3 \vep)}{33 (1 + \vep)}  & \frac{2 (17 + 18 \vep)}{99 (1 + \vep)} \end{smallmatrix}\right)\right)$, whose conditional for every $x\in\R$ is given by $\mu_x^\vep = \mathcal{N}\left(\frac{1 + 3 \vep}{6 + 7 \vep}x,\frac{2 (9 + \vep (19 + 9 \vep))}{9 (1 + \vep) (6 + 7 \vep)}\right)$, with the Log-Sobolev constant $\alpha_{\LSI}^\vep = \frac{9 (1 + \vep) (6 + 7 \vep)}{2 (9 + \vep (19 + 9 \vep))}$.
The effective dynamics is given by 
\begin{equation*}
 d \bar X_t^\vep = \left(- \frac{3}{2} + \frac{3}{4}\left(\frac{1 + 3 \vep}{6 + 7 \vep}\right) \right)\bar X_t^\vep \, dt + dW_t^1 \,,
\end{equation*}
with time marginal $\eta_t^\vep=\law(\bar X_t^\vep)$. For given $X^\vep_t=x$, 
the invariant measure of $Y_t^\vep$ is $\mu^{\av}_x=\mathcal{N}(\frac{1}{6}x,\frac{1}{3})$  and the averaged dynamics read
\begin{equation*}
 d  X^{\av}_t = - \frac{4}{3} X^{\av}_t \, dt + dW_t^1 \,,
\end{equation*}
with time marginal $\eta^{\av}_t=\law(X_t^{\av})$. Throughout we choose the initial condition $(X^\vep_0,Y^\vep_0)=(0,0)$.

\noindent \textbf{Error of the time marginals in relative entropy and comparison to averaging.} We first illustrate the time-marginal error bound~\eqref{eq:relent_OU} in relative entropy. For example~\eqref{eq:numex_t}, we find $\mathcal{D}_{\RelEnt}= 9/16.$
We compute $\max\limits_{t > 0} \,\RelEnt(\hat\rho^\vep_t|\eta_t^\vep)$ for $\vep \in \left\{10^0, 10^{-1}, 10^{-2}, 10^{-3}, 10^{-4} \right\}$ and depict it in the left panel of Figure \ref{fig:relent_timet} on a doubly-logarithmic scale (in red) together with the right hand side of  \eqref{eq:relent_OU} (in blue) and the error $\max\limits_{t > 0}\,\RelEnt(\hat\rho^\vep_t|\eta^{\av}_t)$ of the averaged dynamics (in green). We find that the error indeed scales  linearly in $\vep$ as given in the estimate \eqref{eq:relent_OU}. There is a small difference between the effective and averaged dynamics for $\vep=1$, with the effective dynamics slightly outperforming the averaged dynamics. Note, however, that for a different example or even different initial conditions, averaging can also outperform the effective dynamics for $\vep=1$. Thus, a clear judgement regarding which of the methods in general yields better results is not possible. 

In order to clarify that this overlapping behaviour of averaging and conditional expectation is not due to the particular choice of relative entropy as a measure of error,  in the middle panel of Figure \ref{fig:relent_timet}, we also present  the pathwise error $\mathbb{E}(\sup_{t \in [0,2]} |X_t^\vep - \bar X_t^\vep|)$ (in red) and $\mathbb{E}(\sup_{t \in [0,2]} |X_t^\vep -  X^{\av}_t|)$ (in green). We have used standard Monte-Carlo techniques to compute these pathwise results. Note that the for small $\vep$, this error scales like $\sqrt{\vep}$ which is reminiscent of pathwise averaging results (see for instance~\cite[Equation (61)]{legoll2017pathwise}). 

Being interested also in the time dependence of $\RelEnt(\hat\rho^\vep_t|\eta_t^\vep)$, in the right panel of Figure \ref{fig:relent_timet} we plot  $\RelEnt(\hat\rho^\vep_t|\eta_t^\vep)$ against $t$ for $\vep \in \left\{10^0, 10^{-1}, 10^{-2}, 10^{-3}, 10^{-4} \right\}$ and observe that, at first it increases, but afterwards monotonically relaxes to zero. This also clarifies that studying $\max\limits_{t>0}\RelEnt(\hat\rho^\vep_t|\eta_t^\vep)$ as above is indeed reasonable. 
  \begin{figure}[h!]
  \centering
   \includegraphics[scale=0.34]{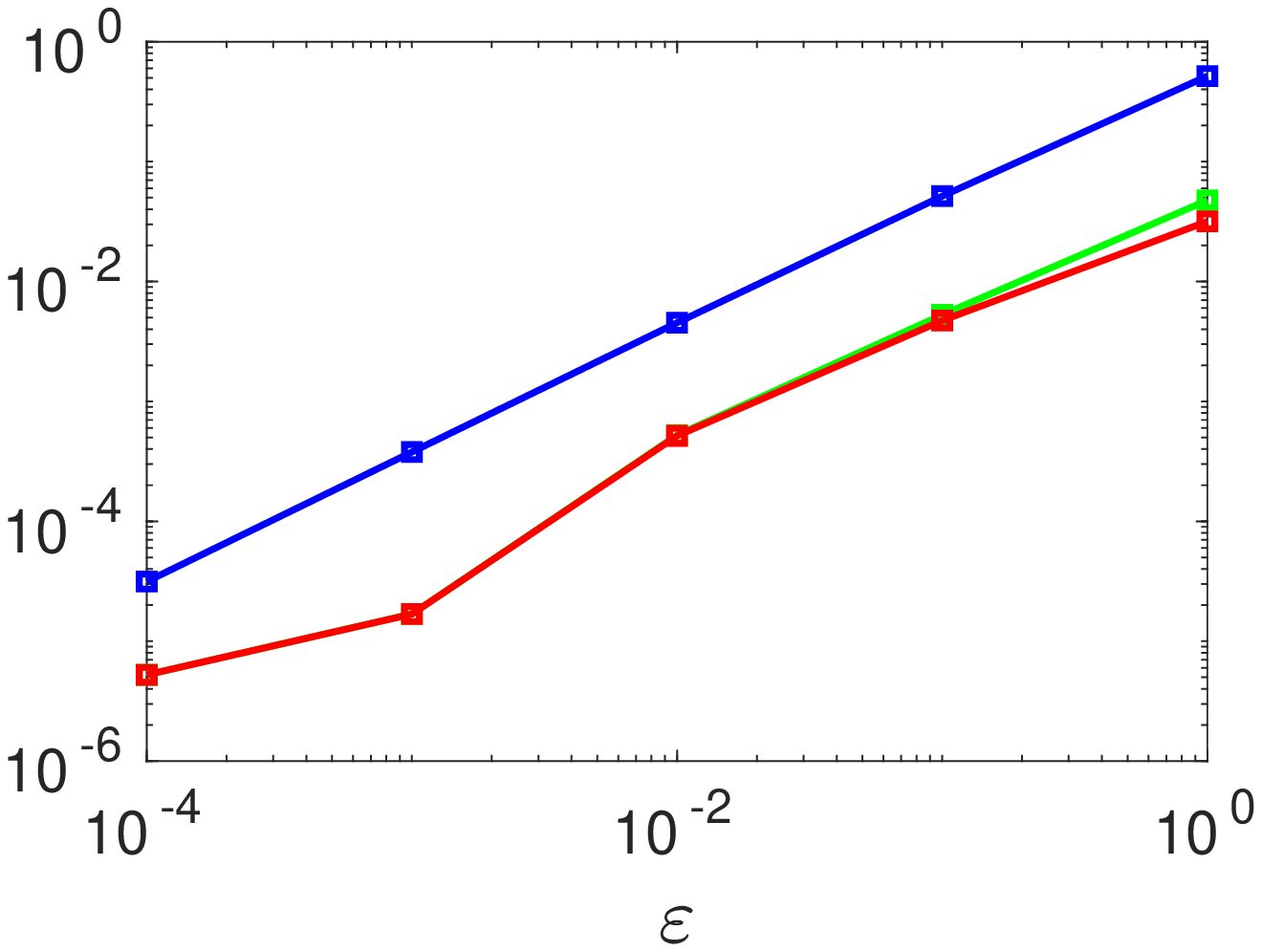}
   \includegraphics[scale=0.34]{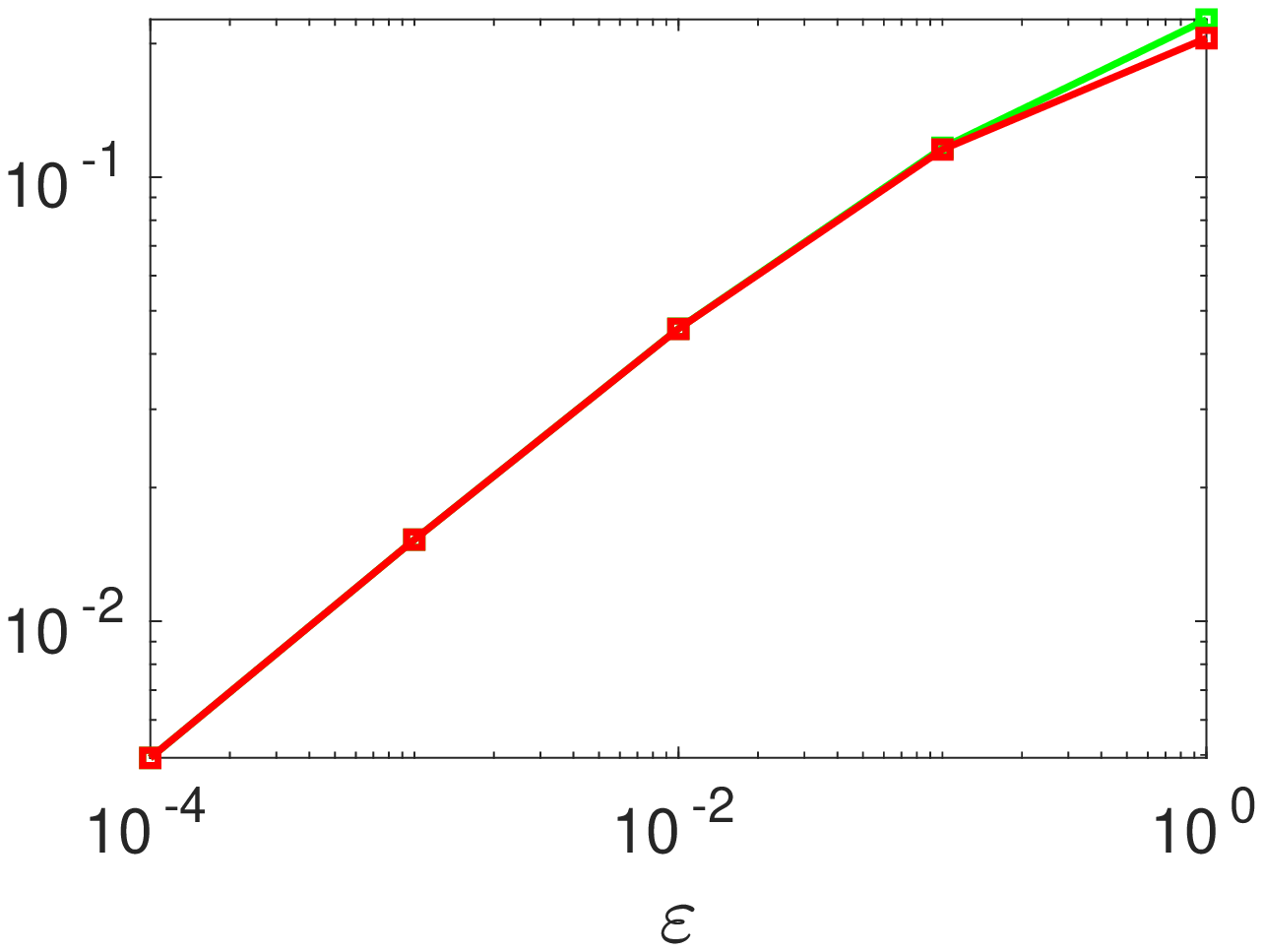}
   \includegraphics[scale=0.34]{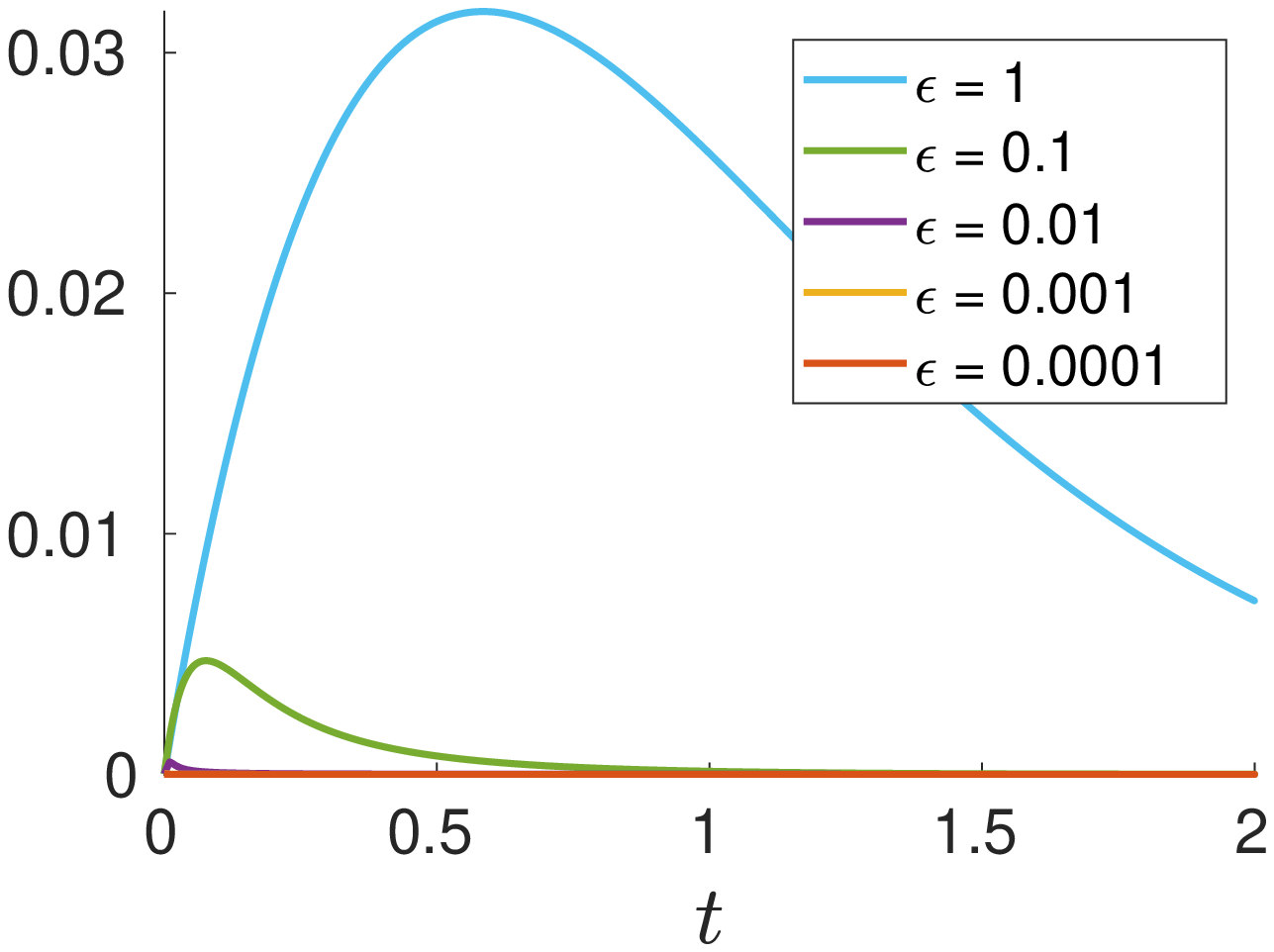}
   \caption{Left: Plot of $\max\limits_{t > 0}\RelEnt(\hat\rho^\vep_t|\eta_t^\vep)$ (in red), $\max\limits_{t > 0}\RelEnt(\hat\rho^\vep_t|\eta^{\av}_t)$ (in green) against $\vep$,  and the right hand side of estimate~\eqref{eq:relent_OU}  (in blue). Middle: Plot of $\mathbb{E}(\sup_{t \in [0,2]} |X_t^\vep - \bar X_t^\vep|)$ (in red) and $\mathbb{E}(\sup_{t \in [0,2]} |X_t^\vep -  X^{\av}_t|)$ (in green). Right: Plot of $\RelEnt(\hat\rho^\vep_t|\eta_t^\vep)$ as a function of time for $\vep \in \left\{10^0, 10^{-1}, 10^{-2}, 10^{-3}, 10^{-4} \right\}$. }
   \label{fig:relent_timet}
  \end{figure}
  
\noindent\textbf{Error in path space.}
Next we numerically illustrate the path-space estimate~\eqref{eq:relent_path_OU} for  a fixed $\vep=0.05$. 
%
We make use of $\RelEnt(\rho|\nu)$  being explicitly computable when $\rho$ and $\nu$ are path-measures of Ornstein-Uhlenbeck processes (see Appendix  \ref{app:proof:numex} for details). Recall that, for $T> 0$ fixed -- we make the dependence on $T$ explicit now -- $\rho_{[0:T]}$ is the path measure of $(X_t^\vep,Y_t^\vep)_{0\leq t \leq T}$ as given in \eqref{eq:numex_t} and $\nu_{[0:T]}$ is  the path measure of (with the choice $\vep=0.05$)
\begin{equation} \label{num:ex_coupled_eff}
 \begin{aligned}
 d \bar X_t^\vep &= -1.3642\,\bar X_t^\vep \, dt + dW_t^1 \,,\\
 dY_t &= 10\, \bar X_t - 30 \,Y_t \, dt + \frac{1}{\sqrt{0.05}} \, dW_t^2\,.
\end{aligned}
\end{equation}
Figure \ref{fig:expathmeasures} shows the explicit error $\RelEnt(\rho_{[0:T]}|\nu_{[0,T]})$ of the path-measures in relative entropy (in red) as a function of $T$. The upper bound as given by the right hand side of~\eqref{eq:relent_path_eps} is presented in blue.

Note that $\RelEnt(\rho|\nu)$ is an upper bound for the error of the marginals in $X$, i.e. $\RelEnt(\rho|\nu) \geq \RelEnt(\hat \rho|\hat \nu)$ hence the red line itself is an upper bound for the actual error of interest. 
\begin{figure}
\centering
 \includegraphics[scale=0.34]{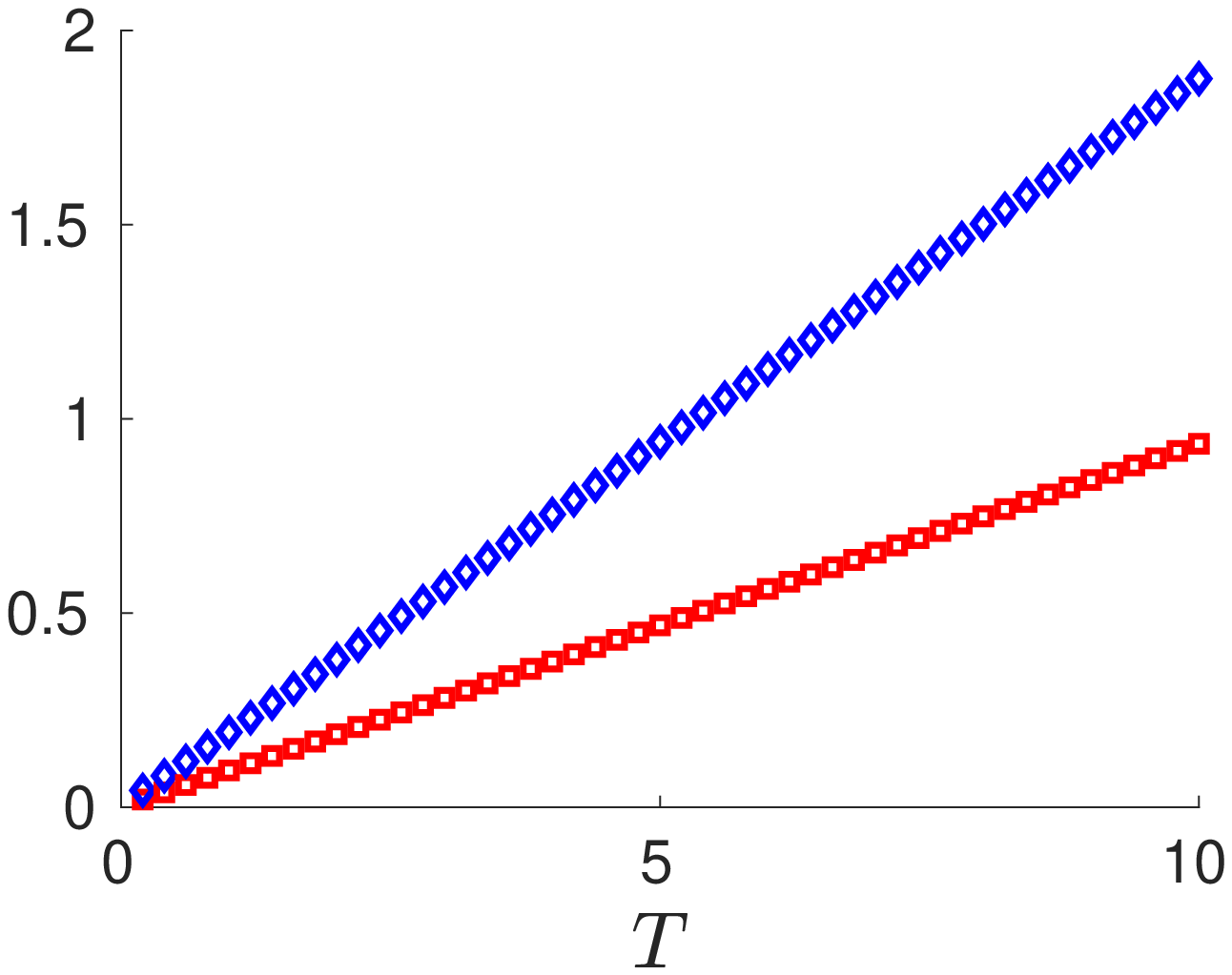}
 \caption{Plot of $\RelEnt(\rho_{[0,T]}|\nu_{[0,T]})$ (in red) and the upper bound in~\eqref{eq:relent_path_eps} (in blue) against $T$, where $\rho_{[0:T]}$, $\nu_{[0,T]}$ are the path-measures of \eqref{eq:numex_t} and \eqref{num:ex_coupled_eff} respectively.} \label{fig:expathmeasures}
\end{figure}

\section{Discussion}\label{sec:conclusion}
Since the seminal work of Legoll \& Leli{\`e}vre in~\cite{legoll2010effective}, mathematical coarse-graining and the notion of effective dynamics has received considerable attention. In this article we generalise these ideas to the setting of non-reversible SDEs and nonlinear CG maps and provide several time-marginal and law of path error estimates. Furthermore, this work presents first results comparing effective dynamics derived via conditional expectation to the averaging literature. 

We now comment on some related issues and conjectures. 

\emph{Conjecture on agreement of averaging and conditional expectations. }In Proposition~\ref{prop:LinStatMeas-Limit} we have presented conditions under which $\bar\mu_x^{\vep}\rightarrow\mu_x^{\av}$ as $\vep\rightarrow 0$, which in turn implies the convergence of the corresponding dynamics (recall Remark~\ref{rem:efftoav}), in the setting of linear diffusions. We now discuss the condition under which similar limit behaviour can be expected in the general setting as well. To this end consider the  system~\eqref{eq:eps-SDE} with invariant measure $\mu$, whose associated generator $\mathcal{L}$ can be decomposed into a slow and a fast component,
\begin{equation*}
 \mathcal{L} = \mathcal{L}_{\mathrm{slow}} + \frac{1}{\vep} \mathcal{L}_{\mathrm{fast}} \  \text{ with } \   
 \mathcal{L}_{\mathrm{slow}} f= f_1 \cdot \nabla_x + \gamma_1 : \nabla^2_x, \ \ \mathcal{L}_{\mathrm{fast}} = f_2 \cdot \nabla_y + \gamma_2 : \nabla^2_y \,.
\end{equation*}
Recall that the measure $\mu_x^{\av}$ satisfies $\mathcal{L}_{\mathrm{fast}}^* \mu_x^{\av} = 0$. We assume that the marginal and conditional invariant measures admit formal asymptotic expansions of the form
\begin{align*}
 \hat\mu = \hat\mu^0 + \vep \hat \mu^1 + \mathcal{O}(\vep^2) \text{ and }
 \bar\mu_x = \bar\mu_x^0 + \vep \bar\mu_x^1 + \mathcal{O}(\vep^2)\,
\end{align*} 
for sufficiently well behaved $\hat\mu^0,\hat\mu^1,\bar\mu_x^0,\bar\mu_x^1$. 

We \emph{conjecture} that if 
\begin{equation}\label{conject:gen}
\forall x \in \R^{n_x}: \hat \mu^0(x) > 0,  
\end{equation} 
i.e.\ $\hat\mu^0 = \lim\limits_{\vep \to 0} \hat \mu^\vep$ has full support, then $\bar\mu_x^\vep \rightarrow \mu_x^{\av}$ as $\vep \to 0$ for any $x\in\R^{n_x}$. 
Formally this follows since we can write
\begin{equation*}
 \mu^\vep = \bar\mu_x^0 \hat\mu^0 + \vep\, (\bar\mu_x^1 \hat \mu^0 + \bar\mu_x^0 \hat \mu^1) + \mathcal{O}(\vep^2) \,.
\end{equation*}
Rewriting $\mathcal L^*\mu^\vep=0$ using this ansatz, the leading order, i.e.\ $\mathcal{O}(1/\vep)$, term is  $\mathcal{L}_{\mathrm{fast}}^* (\bar\mu_x^0 \hat\mu^0) = 0$.
Since $\mathcal{L}_{\mathrm{fast}}^*$ contains differential operators only in the $y$ variable, it does not act on $\hat \mu^0$ which depends only on $x$. Therefore 
\begin{equation*}
 \mathcal{L}_{\mathrm{fast}}^* (\bar\mu_x^0 \hat\mu^0) = 0  \ \Leftrightarrow  \ \hat \mu^0  \mathcal{L}_{\mathrm{fast}}^* (\bar\mu_x^0) = 0 \,.
\end{equation*}
Finally, observe that 
 $ \hat\mu^0  \mathcal{L}_{\mathrm{fast}}^* \bar\mu_x^0 = 0, \, \forall x \in \R^{n_x}$  implies that either $ \mathcal{L}_{\mathrm{fast}}^* \bar\mu_x^0 = 0 $ or $\hat \mu^0 = 0$. If~\eqref{conject:gen} holds, then for any $x\in\R^{n_x}$ we find
\begin{equation*} 
\mathcal{L}_{\mathrm{fast}}^* (\bar\mu_x^0 \hat\mu^0) = 0 \  \Leftrightarrow \ \mathcal{L}_{\mathrm{fast}}^* \bar\mu_x^0 = 0,
\end{equation*}
 which is precisely the statement that the $\mathcal{O}(1)$ term of $\bar\mu_x^\vep$ agrees with $\mu_x^{\av}$, i.e., $\bar\mu_x^\vep \to \mu_x^{\av}\,.$

\emph{Degree of irreversibility. } 
The difference between the examples given in \eqref{eq:guid_ex} and \eqref{eq:LinSDEp} lies in the noise, which is degenerate for the first example and non-degenerate for the second one. The parameter $p$ in the second example characterises how fast the noise becomes degenerate as $\vep \to 0$, i.e.\ for larger $p$ we will approach the setting of the first example faster, whereas $p=0$ will keep the noise non-degenerate. Remark~\ref{rem:p-scaling} states that for $p<1$ the result of Proposition~\ref{prop:LinStatMeas-Limit} still holds, i.e.\ averaging and conditional expectations give the same result in the limit $\vep \to 0$. Consider the following modification of~\eqref{eq:guid_ex} on the lines of~\eqref{eq:LinSDEp},
\begin{equation}\label{eq:mod-sca}
\begin{aligned}
dX_t &= (- X_t + Y_t)\, dt, +\sqrt{\vep^p}\,dW_t^1,      \\
  dY_t\, &= - \frac{1}{\varepsilon} Y_t\, dt + \frac{1}{\sqrt{\varepsilon}} \,dW^2_t.
\end{aligned}
\end{equation}
Using~\eqref{eq:cond-inv-meas}, the mean of the conditional invariant measure satisfies
\begin{equation*}
 m^c_x (\vep) = \frac{x}{1+\vep^{p-1}+\vep^p} \xrightarrow{\vep \to 0} \quad \begin{cases}
 x \,, \ \ \ p> 1 , \\
 \frac{1}{2}x\,,\ p=1 , \\
  0\, ,\ \ \ p <1 .
\end{cases}
\end{equation*} 
This means that for $p<1$ we will find the same limit equations by averaging and conditional expectations, whereas for $p\geq 1$ they differ. 

This raises the question whether it is possible to link $p$ to the \emph{degree of irreversibility} of the dynamics, as introducing irreversibility destroys the conformity of the two methods in the first place (see Section \ref{sec:eps-Rev} for a discussion of this). We now compute the degree of irreversibility for the modified system~\eqref{eq:mod-sca}, and relate it to the conformity of averaging and effective dynamics. 

The degree of irreversibility can be measured by the entropy production rate given by the relative entropy between the path-measure $\rho_{[0,T]}^+$ of the forward process described by \eqref{eq:mod-sca} and the path-measure of the associated time-reversed process  $\rho_{[0,T]}^-$. (For a definition of the time-reversed process see \cite{haussmann1986time}; cf. \cite{katsoulakis2014measuring}.)

Computing the degree of irreversibility then gives 
 \[\lim\limits_{T \to \infty} \frac{1}{T}\RelEnt(\rho^+_{[0,T]}|\rho^-_{[0,T]}) = \frac{2}{\vep^p(1+\vep)} \xrightarrow{\vep \to 0} \ \begin{cases} \infty\,, \ p> 0 \\
		2\,, \ \ p=0\,.\end{cases}
\]
This indicates that the degree of irreversibility being infinite is necessary (but not sufficient) for the two methods to differ, whereas a finite degree of irreversibility is a sufficient criterion for their conformity. This insight regarding degree of irreversibility offers another viewpoint on the conformity of these techniques.

\emph{Estimates when the slow process is deterministic. }A crucial assumption made throughout this work (and most works on effective dynamics) is that the diffusion matrix is non-degenerate. In particular this means that both the slow and fast part of the system are SDEs, thereby excluding the case when the fast variable is an ODE. However the averaging literature provides ample results for such systems. The question of proving error estimates comparing the deterministic slow variable and the effective dynamics, however, is still open.  

\vspace{0.3cm}
\noindent \textbf{Acknowledgements} The authors would like to thank Fr{\'e}d{\'e}ric Legoll, Tony Leli{\`e}vre, Jonathan Mattingly and Nikolas N\"usken for stimulating interactions. This work has been partially supported by the Collaborative Research Center \emph{Scaling Cascades in Complex Systems} (DFG-SFB 1114) through projects A05 and B05.  
The research of US has been funded by the DFG within the MATH+  Cluster of Excellence (EXC 2046, EP4-4).

\begin{appendices}
\section{Proofs for the linear CG map estimates}\label{App-sec:Lin}
We first prove the relative entropy result for the coordinate projection CG map\begin{proof}[Proof of Theorem~\ref{NL-thm:RE}]
To prove this result we need to bound $\RelEnt(\hat\rho_t|\eta_t)$, where $\hat\rho_t,\eta_t$ solve two different Fokker-Planck equations. To achieve this we will make use of a recent result~\cite[Theorem 1.1]{bogachev2016distances}, which gives
\begin{equation}\label{NL-eq:fir-thm}
\RelEnt(\hat\rho_T|\eta_T) \leq \RelEnt(\hat\rho_0|\eta_0) + \int_0^T \int_{\R^k} |h_t|_{\Gamma^{-1}}^2 \, d\hat\rho_t \, dt,
\end{equation}
with 
\begin{equation}\label{NL-eq:h_t}
h_t = (F+\nabla_x \cdot \Gamma) - (\hat F+\nabla_x \cdot \hat \Gamma) + (\Gamma-\hat \Gamma)\nabla_x\log\hat\rho_t.
\end{equation} 
Note that while here we only focus on the case of the projected and effective dynamics, this bound applies in fair generality to any two Fokker-Planck equations and the integral term can be interpreted as a large-deviation rate functional,~\cite[Theorem 2.18]{duong2018quantification}. 

The proof is divided into three parts,  first we rewrite $h_t$ in a more recognisable form, second we estimate $ |h_t|_{\Gamma^{-1}}^2$ and finally we arrive at the claimed estimate. 
Using 
the explicit form~\eqref{Lin-def:cond-meas} of the conditional measure, we calculate
\begin{align*}
\nabla_x\cdot \Gamma 
&= \int_{\R^{n_y}} \pra{\nabla_x \cdot \gamma_{11}(x,y)+ \gamma_{11}(x,y)\nabla_x\log\mu(x,y) - \gamma_{11}(x,y)\nabla_x\log\hat\mu(x) }d\bar\mu_x(y)\\
&= \E_{\bar\mu_x}\pra{\nabla_x \cdot \gamma_{11}+ \gamma_{11}\nabla_x\log\mu} - \Gamma\nabla_x\log\hat\mu.
\end{align*}
A similar calculation yields $\nabla_x\cdot \hat \Gamma = \E_{\bar\rho_{t,x}}\pra{\nabla_x \cdot \gamma_{11}+ \gamma_{11}\nabla_x\log\rho_t} - \hat \Gamma\nabla_x\log\hat\rho_t$. Substituting these expressions into~\eqref{NL-eq:h_t} we find
\begin{equation}\label{eq:h_t-thm2}
h_t = (F-\hat F) + \bra{\E_{\bar\mu_x}\pra{\nabla_x\cdot \gamma_{11} + \gamma_{11}\nabla_x\log\mu} - \E_{\bar\rho_{t,x}}\pra{\nabla_x\cdot \gamma_{11} + \gamma_{11}\nabla_x\log\rho_t}} + \Gamma\nabla_x\log\bra{\frac{\hat\rho_t}{\hat\mu}}.
\end{equation}
Using the explicit formulation~\eqref{Lin-def:mar-meas} of the marginal measures, it follows that 
\begin{align*}
\Gamma\nabla_x\log\bra{\frac{\hat\rho_t}{\hat\mu_t}} = \int_{\R^{n_y}}\Gamma\nabla_x\log\rho_td\bar\rho_{t,x}(y) -  \int_{\R^{n_y}}\Gamma\nabla_x\log\mu\,d\bar\mu_{x}(y), 
\end{align*}
Substituting this back into~\eqref{eq:h_t-thm2}
and adding and subtracting $\E_{\bar\rho_{t,x}}\bra{(\gamma_{11}-\Gamma)\nabla_x\log\mu}$ we arrive at 
\begin{align}\label{eq:h_t-thm-3}
h_t= \int_{\R^{n_y}}\bra{f_1 - \nabla_x\cdot \gamma_{11} - (\gamma_{11}-\Gamma)\nabla_x\log\mu}(d\bar\rho_{t,x}-d\bar\mu_{x}) - \E_{\bar\rho_{t,x}}\pra{(\gamma_{11}-\Gamma)\nabla_x\log\bra{\frac{\rho}{\mu}}}=: \mathrm{I} + \mathrm{II}.
\end{align}
Recall from~\eqref{NL-eq:fir-thm} that we need to estimate $|h_t|^2_{\Gamma^{-1}}=|\Gamma^{-1/2}h_t|^2\leq 2|\Gamma^{-1/2}\mathrm{I}|^2 + 2|\Gamma^{-1/2}\mathrm{II}|^2$. For any coupling $\Pi\in\mathcal P(\R^{n_y\times n_y})$ of $\bar\rho_{t,x}$ and $\bar\mu_x$, we can write
\begin{align}
|\Gamma^{-1/2}\mathrm{I}|^2 &= \abs{\int_{\R^{n_y}} \Gamma^{-1/2} \bra{f_1 - \nabla_x\cdot \gamma_{11} - (\gamma_{11}-\Gamma)\nabla_x\log\mu} (d\bar\rho_{t,x}-d\bar\mu_x) }^2 \nonumber\\
&= \abs{\int_{\R^{n_y\times n_y}} \Gamma^{-1/2}(x)\bra{ \mathcal F (x,y)-\mathcal F (x,y') }d\Pi(y,y')}^2 \leq \kappa^2_{\RelEnt}\int_{\R^{n_y\times n_y}} |y-y'|^2d\Pi(y,y')\nonumber \\
& \leq \kappa^2_{\RelEnt} \Wasser_2^2(\bar\rho_{t,x},\bar\mu_x) \leq \frac{\kappa^2_{\RelEnt}}{\alpha_{\LSI}\alpha_{\TI}} \int_{\R^{n_y}}\abs{\nabla_y\log\bra{\frac{\bar\rho_{x}}{\bar\mu_x}}}^2d\bar\rho_{t,x}(y) =
\frac{\kappa^2_{\RelEnt}}{\alpha_{\LSI}\alpha_{\TI}} \int_{\R^{n_y}}\abs{\nabla_y\log\bra{\frac{\rho_t}{\mu}}}^2d\bar\rho_{t,x}(y). \label{Lin-eq:RelT1}
\end{align}
Here we have used $\mathcal F:=f_1 - \nabla_x\cdot \gamma_{11}
- (\gamma_{11}-\Gamma)\nabla_x\log\mu$, the first inequality follows from~\ref{NE-ass:relent-kappa} and the second inequality follows by taking infimum over all admissible couplings $\Pi$. The final inequality follows from Assumption~\ref{NL-ass:relent-LSI} and the final equality follows since $\nabla_{y}\hat\rho_t=\nabla_{y}\hat\mu=0$.

Using Assumption~\ref{NL-ass:relent-lambda} and the Jensen's inequality, for the second term on the right hand side of~\eqref{eq:h_t-thm-3} we find
\begin{align}\label{Lin-eq:RelT2}
|\Gamma^{-1/2}\mathrm{II}|^2 \leq \lambda_{\RelEnt}^2 \int_{\R^{n_y}} \abs{\nabla_x \log\bra{\frac{\rho_t}{\mu}}}^2d\bar\rho_{t,x}(y).
\end{align}
Substituting the bounds~\eqref{Lin-eq:RelT1},~\eqref{Lin-eq:RelT2} into~\eqref{NL-eq:fir-thm} and using the disintegration theorem we arrive at
\begin{align*}
\RelEnt(\hat\rho_T|\eta_T) &\leq \RelEnt(\nu_0|\eta_0) 
+  2\lambda_{\RelEnt}^2\int_0^T \int_{\R^n} \abs{\nabla_x \log\bra{\frac{\rho_t}{\mu}}}^2d\rho_t(z)dt 
+ \frac{2\kappa^2_{\RelEnt}}{\alpha_{\LSI}\alpha_{\TI}} \int_0^T\int_{\R^n}\abs{\nabla_y\log\bra{\frac{\rho_t}{\mu}}}^2d\rho_{t}(z)dt\\
&\leq \RelEnt(\nu_0|\eta_0) 
+  \frac{2}{\lambda_{\min}(\gamma)}\bra{\lambda_{\RelEnt}^2+\frac{\kappa^2_{\RelEnt}}{\alpha_{\LSI}\alpha_{\TI}}}\int_0^T \int_{\R^n} \abs{\nabla \log\bra{\frac{\rho_t}{\mu}}}_{\gamma}^2d\rho_t(z)dt\\
&= \RelEnt(\nu_0|\eta_0) 
+  \frac{2}{\lambda_{\min}(\gamma)}\bra{\lambda_{\RelEnt}^2+\frac{\kappa^2_{\RelEnt}}{\alpha_{\LSI}\alpha_{\TI}}}\left[ \RelEnt(\rho_0|\mu) - \RelEnt(\rho_t|\mu) \right].
\end{align*}
Here $\lambda_{\min}(\gamma)$ is defined in~\eqref{def:min-lam},  and the final inequality follows from the entropy-dissipation identity~\eqref{eq:EDI}.  
\end{proof}
 We now outline the proof of Proposition~\ref{NL-thm:REeps}.
\begin{proof}[Proof of Theorem~\ref{NL-thm:REeps}]
The proof follows on the lines of Theorem~\ref{NL-thm:RE} upto~\eqref{Lin-eq:RelT2}. Using
\begin{equation*}
h_t^\vep:= (F^\vep+\nabla_x \cdot \Gamma^\vep) - (\hat F^\vep+\nabla_x \cdot \hat \Gamma^\vep) + (\Gamma^\vep-\hat \Gamma^\vep)\nabla_x\log\hat\rho^\vep_t,
\end{equation*}
and the Young's inequality, for any $\tau>0$ we find
\begin{align*}
&\int_0^T \int_{\R^k} |h^\vep_t|_{(\Gamma^\vep)^{-1}}^2 \, d\hat\rho^\vep_t \, dt \\
&\leq (\lambda^\vep_{\RelEnt})^2(1+\tau)\int_0^T\int_{\R^n}\abs{\nabla_x\log\frac{\rho^\vep_t}{\mu^\vep}}^2d\rho^\vep_{t}dt 
+  \frac{(\kappa^\vep_{\RelEnt})^2}{\alpha^\vep_{\TI}\alpha^\vep_{\LSI}}\bra{1+\frac{1}{\tau}}\int_0^T\int_{\R^n}\abs{\nabla_y\log\frac{\rho^\vep_t}{\mu^\vep}}^2d\rho^\vep_{t}dt\\
&\leq \frac{(\lambda^\vep_{\RelEnt})^2}{\lambda_{\min}(\gamma_1)}(1+\tau)\int_0^T\int_{\R^n}\abs{\nabla_x\log\frac{\rho^\vep_t}{\mu^\vep}}^2_{\gamma_1}d\rho^\vep_{t}dt +  \frac{\vep(\kappa^\vep_{\RelEnt})^2}{\alpha^\vep_{\TI}\alpha^\vep_{\LSI}\lambda_{\min}(\gamma_2)}\bra{1+\frac{1}{\tau}}\frac{1}{\vep}\int_0^T\int_{\R^n}\abs{\nabla_y\log\frac{\rho^\vep_t}{\mu^\vep}}_{\gamma_2}^2d\rho^\vep_{t}dt.
\end{align*}
With the choice $\tau=\frac{\vep (\kappa^\vep_{\RelEnt})^2 \lambda_{\min}(\gamma_1)}{\alpha^\vep_{\TI}\alpha^\vep_{\LSI}\lambda^2_{\RelEnt} \lambda_{\min}(\gamma_2)}$, we arrive at
\begin{align*}
&\int_0^T \int_{\R^k} |h^\vep_t|_{(\Gamma^\vep)^{-1}}^2 \, d\hat\rho^\vep_t \, dt \\
&\leq \bra{\frac{(\lambda^\vep_{\RelEnt})^2}{\lambda_{\min}(\gamma_1)} + \vep \frac{(\kappa^\vep_{\RelEnt})^2}{\alpha^\vep_{\TI}\alpha^\vep_{\LSI}\lambda_{\min}(\gamma_2)}}
\int_0^T\pra{ \int_{\R^n}\abs{\nabla_x\log\frac{\rho^\vep_t}{\mu^\vep}}^2_{\gamma_1}d\rho^\vep_{t} +\frac{1}{\vep}\int_{\R^n}\abs{\nabla_y\log\frac{\rho^\vep_t}{\mu^\vep}}_{\gamma_2}^2d\rho^\vep_{t}
}dt,
\end{align*}
and the final result follows by using the entropy-dissipation bound~\eqref{eq:eps-EDI} and substituting back into~\eqref{NL-eq:fir-thm}. 
\end{proof}
We now prove the Wasserstein result for the coordinate projection CG map. 
\begin{proof}[Proof of Theorem~\ref{NL-thm:Was}]
The proof is based on using a standard synchronous coupling argument and the Gronwall's lemma (see for instance the proof of~\cite[Theorem 2.23]{duong2018quantification}), and we only outline the main steps here. We start with a coupling $\Theta_t\in\mathcal P(\R^{2n_x})$ of the projected dynamics $\hat\rho_t$~\eqref{NL-eq:proj} and the effective dynamics $\eta_t$~\eqref{NL-eq:eff}, which solves
\begin{align}\label{NLeq:Coup}
\begin{cases}\partial_t\Theta = \nabla\cdot\pra{\begin{pmatrix} \hat F(t,x_1)\\ F(x_2) \end{pmatrix}\Theta } + \nabla^2:\pra{\begin{pmatrix}\hat \Gamma(t,x_1) & \sqrt{\hat \Gamma(t,x_1)}\sqrt{\Gamma(x_2)^T} \\ \sqrt{\hat \Gamma(t,x_1)}\sqrt{\Gamma(x_2)^T} & \sqrt{\Gamma(x_2)}\end{pmatrix}\Theta}\\
\Theta|_{t=0}=\Theta_0,
\end{cases}
\end{align}
where $\Theta_0$ is the optimal Wasserstein-2 coupling of the initial data for the projected and the effective dynamics. As already mentioned, this coupling is the Fokker-Planck equation corresponding to the synchronous coupling (see~\cite{chen1989} for details) of the projected and effective SDEs.

Using the evolution~\eqref{NLeq:Coup} of $\Theta_t$ and integrating by parts we find (here onwards we use $\R^{2n_x}=\R^{n_x}\times\R^{n_x}$ to simplify notation)
\begin{equation}\label{NL-Was-aux-1}
\begin{aligned}
\frac{d}{dt}\int_{\R^{2{n_x}}}\frac12 |x_1-x_2|^2d\Theta_t(x_1,x_2)&= \int_{\R^{2{n_x}}}\abs{\sqrt{\hat\Gamma(t,x_1)}-\sqrt{\Gamma(x_2)}}_F^2d\Theta_t(x_1,x_2) \\ 
&\quad- \int_{\R^{2{n_x}}} (x_1-x_2)\cdot\pra{\hat F(t,x_1)-F(x_2)}d\Theta_t(x_1,x_2).
\end{aligned}
\end{equation}
Adding and subtracting $\sqrt{\Gamma(x_1)}$ and using the triangle inequality, the first term in the right hand side of~\eqref{NL-Was-aux-1} can be estimated as 
\begin{align*}
\int_{\R^{2n_x}}\abs{\sqrt{\hat\Gamma(t,x_1)}-\sqrt{\Gamma(x_2)}}_F^2d\Theta_t(x_1,x_2) \leq 2 \int_{\R^{{n_x}}}\abs{\sqrt{\hat\Gamma(t,x_1)}-\sqrt{\Gamma(x_1)}}_F^2d\hat\rho_t(x_1) \\ + 2\||\nabla_x\sqrt{\Gamma}|_F\|^2_{L^\infty(\R^n)}\int_{\R^{2{n_x}}}|x_1-x_2|^2d\Theta_t(x_1,x_2). 
\end{align*}
Recall that $|\nabla_x\sqrt{\Gamma}|_F$ is the tensor norm induced from the Frobenius norm for matrices. Proceeding similarly with the second term in the right hand side of~\eqref{NL-Was-aux-1} and substituting back we arrive at
\begin{align}\label{NL-Wasser-aux-2}
\begin{split}
\frac{d}{dt}\int_{\R^{2{n_x}}}&\frac12 |x_1-x_2|^2d\Theta_t(x_1,x_2) \leq C_{\Wasser}\int_{\R^{2{n_x}}}|x_1-x_2|^2 d\Theta_t(x_1,x_2) \\& + 2 \int_{\R^{{n_x}}}\abs{\sqrt{\Gamma(t,x_1)}-\sqrt{\Gamma(x_1)}}_F^2d\hat\rho_t(x_1) +  \int_{\R^{{n_x}}}\abs{\hat F(t,x_1)-F(x_1)}^2d\hat\rho_t(x_1),
\end{split}
\end{align}
 where $C_\Wasser:=1+\max\{ 2\||\nabla_x \sqrt{\Gamma}|_F\|^2_{L^\infty(\R^n)},\||\nabla_x F|\|_{L^\infty(\R^n)}\}$. For strictly positive definite matrices $A,B$, by Lieb's concavity theorem (see~\cite[Theorem IX.6.1]{bhatia2013matrix}) the mapping $(A,B)\mapsto |\sqrt{A}-\sqrt{B}|_F^2$ is convex. Therefore, using the two-component Jensen's inequality 
we find
\begin{align*}
\abs{\sqrt{\hat \Gamma(t,x)}-\sqrt{\Gamma(x)}}^2_F &\leq \int_{\R^{2n_y}}\mathrm{tr}\pra{\bra{\sqrt{\gamma_{11}(x,y_1)}-\sqrt{\gamma_{11}(x,y_2)}}^2}d\Pi(y_1,y_2) \leq \lambda^2_\Wasser \int_{2\R^{n_y}}d|y_1-y_2|^2d\Pi(y_1,y_2)\\
&\leq \lambda^2_\Wasser\Wasser^2_2(\bar\rho_{t,x},\bar\mu_x) 
\leq \frac{2\lambda^2_\Wasser}{\alpha_{\TI}}\RelEnt(\bar\rho_{t,x}|\bar\mu_x),
\end{align*}
where $\Pi$ is a coupling of $\bar\rho_{t,x}$ and $\bar\mu_x$,  and $\lambda_{\Wasser}$ is defined in Assumption~\ref{NL-ass:Wasser-lambda}. Similarly, using the Jensen's inequality along with Assumption~\ref{NL-ass:Wasser-kappa}, for the last term in~\eqref{NL-Wasser-aux-2} we find
\begin{equation*}
\abs{\hat F(t,x_1)-F(x_1)}^2 \leq  \frac{2\kappa^2_\Wasser}{\alpha_{\TI}}\RelEnt(\bar\rho_{t,x}|\bar\mu_x).
\end{equation*}
Substituting these bounds back into~\eqref{NL-Wasser-aux-2} and applying Gronwall-type estimate we arrive at
\begin{align}
\Wasser^2_2&(\hat\rho_t,\eta_t)\leq e^{C_{\Wasser}t}\Wasser^2_2(\hat\rho_0,\eta_0)+2 \bra{\frac{\lambda_{\Wasser}^2+\kappa^2_{\Wasser}}{\alpha_{\TI}}}\int_0^t\int_{\R^{n_x}}\RelEnt(\bar\rho_{t,s}|\bar\mu_x)d\hat\rho_{s}(x)e^{C_{\Wasser}(t-s)}ds\nonumber\\
&\leq e^{C_{\Wasser}t}\Wasser^2_2(\hat\rho_0,\eta_0)+ \bra{\frac{\lambda_{\Wasser}^2+\kappa^2_{\Wasser}}{\alpha_{\TI}\alpha_{\LSI}}}\int_0^t\int_{\R^{n_x}} \bra{\int_{\R^{n_y}} \abs{\nabla_{y}\bra{\log\frac{\bar\rho_{s,x}}{\bar\mu_x}}}^2d\bar\rho_{s,x}}d\hat\rho_{s}(x)e^{C_{\Wasser}(t-s)}ds\label{eq:Wass-aux}\\
&\leq e^{C_{\Wasser}t}\Wasser^2_2(\hat\rho_0,\eta_0)+ e^{C_{\Wasser}t}\bra{\frac{\lambda_{\Wasser}^2+\kappa^2_{\Wasser}}{\alpha_{\TI}\alpha_{\LSI}}}\int_0^t\int_{\R^n} \abs{\nabla \bra{\log\frac{\rho_{s}}{\mu}}}^2 d\rho_{s}(x)ds\nonumber\\
&\leq e^{C_{\Wasser}t}\Wasser^2_2(\hat\rho_0,\eta_0)+ e^{C_{\Wasser}t}\bra{\frac{\lambda_{\Wasser}^2+\kappa^2_{\Wasser}}{\alpha_{\TI}\alpha_{\LSI}\lambda_{\min}(\gamma)}}\int_0^t\int_{\R^n} \abs{\nabla \bra{\log\frac{\rho_{s}}{\mu}}}^2_{\gamma} d\rho_{s}(x)ds,\nonumber
\end{align}
where $C_{\Wasser}:=1+\max\{ 2\||\nabla_x \sqrt{\Gamma}|_F\|^2_{L^\infty(\R^n)},\||\nabla_x F|\|_{L^\infty(\R^n)}\}$  and $\lambda_{\min}(\gamma)$ is defined in~\eqref{def:min-lam}. Here the second inequality follows from Assumption~\ref{NL-ass:Wasser-LSI}, and the third inequality follows by using the disintegration theorem and $\nabla_{y}\hat\rho_t=\nabla_{y}\hat\mu=0$. The required result then follows by using the entropy-dissipation identity~\eqref{eq:EDI}.
\end{proof}
We now outline the proof of Proposition~\ref{prop-wass:eps}.
\begin{proof}[Proof of Theorem~\ref{prop-wass:eps}]
The proof of this result follows as in the proof of Theorem~\ref{NL-thm:Was} upto~\eqref{eq:Wass-aux}, which in this  context gives
\begin{align*}
&\Wasser^2_2(\hat\rho^\vep_t,\eta^\vep_t)
\leq e^{C^\vep_{\Wasser}t}\bra{\Wasser^2_2(\hat\rho_0,\eta_0)+ \bra{\frac{\lambda_{\Wasser}^2+\kappa^2_{\Wasser}}{\alpha^\vep_{\TI}\alpha^\vep_{\LSI}}}\int_0^t\int_{\R^{n_x}} \bra{\int_{\R^{n_y}} \abs{\nabla_y\bra{\log\frac{\bar\rho^\vep_{s,x}(y)}{\bar\mu^\vep_x(y)}}}^2d\bar\rho^\vep_{s,x}}(y)d\hat\rho^\vep_{s}(x)ds} \\
& \leq e^{C^\vep_{\Wasser}t}\bra{\Wasser^2_2(\hat\rho_0,\eta_0)+  \bra{\frac{\lambda_{\Wasser}^2+\kappa^2_{\Wasser}}{\alpha^\vep_{\TI}\alpha^\vep_{\LSI}\lambda_{\min}(\gamma_2)}}\int_0^t\int_{\R^{n_x}} \bra{\int_{\R^{n_y}} \abs{\nabla_y\bra{\log\frac{\rho^\vep_s}{\mu^\vep}}}_{\gamma_2}^2d\bar\rho_{s,x}}(y)d\hat\rho^\vep_{s}(x)ds}\\
&\leq e^{C^\vep_{\Wasser}t}\Wasser^2_2(\hat\rho_0,\eta_0)+ \vep \bra{\frac{\lambda_{\Wasser}^2+\kappa^2_{\Wasser}}{\alpha^\vep_{\TI}\alpha^\vep_{\LSI}\lambda_{\min}(\gamma_2)}}\frac{1}{\vep}\int_0^t\int_{\R^n} \abs{\nabla_y\bra{\log\frac{\rho^\vep_{s}}{\mu^\vep}}}_{\gamma_2}
^2d\rho^\vep_sds\\
&\leq e^{C^\vep_{\Wasser}t}\Wasser^2_2(\hat\rho_0,\eta_0)+ \vep \bra{\frac{\lambda_{\Wasser}^2+\kappa^2_{\Wasser}}{\alpha^\vep_{\TI}\alpha^\vep_{\LSI}\lambda_{\min}(\gamma_2)}}\int_0^t
\pra{ \int_{\R^n}\abs{\nabla_x\bra{\log\frac{\rho^\vep_s}{\mu^\vep}}}^2_{\gamma_1}d\rho^\vep_{s} +\frac{1}{\vep}\int_{\R^n}\abs{\nabla_y\bra{\log\frac{\rho^\vep_s}{\mu^\vep}}}_{\gamma_2}^2d\rho^\vep_{s}
}ds
\end{align*}
Here the second inequality follows since $\nabla_y\hat\rho^\vep_t(x)=\nabla_y\hat\mu^\vep(x)=0$, the third inequality follows from the disintegration theorem, and the final inequality follows by adding $|\nabla_x\log(\rho^\vep_s/\mu^\vep)|^2_{\gamma_1}$. The final result than follows by using the entropy-dissipation bound~\eqref{eq:eps-EDI}
\end{proof}

\section{Proof of Theorem~\ref{NLT-thm:RE}}\label{App-sec:NL}
The proof of Theorem~\ref{NLT-thm:RE} follows the same proof strategy as the linear counterpart. However due to the non-affine nature of the CG map, the derivatives and certain bounds need to be handled differently. In the proof below we outline these differences. 
\begin{proof}[Proof of Theorem~\ref{NLT-thm:RE}]
As in the case of linear CG maps, we will make use of \cite[Theorem 1.1]{bogachev2016distances}), which gives
\begin{equation}\label{NLT-eq:fir-thm}
\RelEnt(\hat\rho_T|\eta_T) \leq \RelEnt(\nu_0|\eta_0) + \int_0^T \int_{\R^k} |h_t|_{\Gamma^{-1}}^2 \, d\hat\rho_t \, dt,
\end{equation}
with 
\begin{equation}\label{NLT-eq:h_t}
h_t = (F+\nabla_x \cdot \Gamma) - (\hat F+\nabla_x \cdot \hat \Gamma) + (\Gamma-\hat \Gamma)\nabla_x\log\hat\rho_t.
\end{equation} 

As in the linear setting, we divide the proof into three steps -- in the first step we rewrite $h_t$ in a more recognisable form, in the second step we estimate $ |h_t|_{\Gamma^{-1}}^2$ and in the final step we arrive at the claimed estimate. 

\emph{Step 1.} Using $M=\nabla\xi\gamma\nabla\xi^T$ and $G=\nabla\xi\nabla\xi^T$ along with~\eqref{res:level-set-der} we calculate
\begin{align*}
\nabla_x\cdot \Gamma &=\nabla_x\cdot \int_{\Sigma_x} M d\bar\mu_x= \nabla_x\cdot \int_{\Sigma_x} M \frac{\mu}{\hat\mu\circ\xi} \frac{d\Haus^{d-k}}{\Jac\xi}
= \int_{\Sigma_x}\nabla\cdot\bra{MG^{-1}\nabla\xi\frac{\mu}{\mu\circ\xi}}\frac{d\Haus^{d-k}}{\Jac\xi}\\
&=\int_{\Sigma_x}\bra{G^{-1}\nabla\xi\nabla\cdot M + M\nabla\cdot(G^{-1}\nabla\xi)+M G^{-1}\nabla\xi\nabla\log\mu - MG^{-1}\nabla\xi \nabla\xi^T(\nabla_x\log\hat\mu)\circ\xi }\frac{\mu}{\mu\circ\xi}\frac{d\Haus^{d-k}}{\Jac\xi}\\
&=\int_{\Sigma_x}\bra{G^{-1}\nabla\xi\nabla\cdot M + M\nabla\cdot(G^{-1}\nabla\xi)+M G^{-1}\nabla\xi\nabla\log\mu}d\bar\mu_x - \bra{\int_{\Sigma_x}Md\bar\mu_x}\nabla_x\log\hat\mu\\
&=\int_{\Sigma_x}N_\mu\bar\mu_x - \Gamma\nabla_x\log\hat\mu,
\end{align*}
where  $N_\mu:=G^{-1}\nabla\xi\nabla\cdot M + M\nabla\cdot(G^{-1}\nabla\xi)+M G^{-1}\nabla\xi\nabla\log\mu$.
Here we have used the explicit form~\eqref{NL-def:cond-meas} of the conditional measures. A similar calculation with $N_\rho:=G^{-1}\nabla\xi\nabla\cdot M + \nabla\cdot(G^{-1}\nabla\xi)+M G^{-1}\nabla\xi\nabla\log\rho$ yields
\begin{equation*}
\nabla_x\cdot \hat \Gamma = \int_{\Sigma_x}N_\rho d\bar\rho_{t,x} - \hat \Gamma\nabla_x\log\hat\rho_t.
\end{equation*}
Substituting these expressions into~\eqref{NLT-eq:h_t} and adding and subtracting $N_\mu\bar\rho_{t,x}$ we find
\begin{align}
h_t&
= (F-\hat F) + \int_{\Sigma_x} N_\mu(d\bar\mu_x -d\bar\rho_{t,x})+ \int_{\Sigma_x} (N_\mu - N_\rho)d\bar\rho_{t,x} +\Gamma\nabla_x\log\frac{\hat\rho_t}{\hat\mu} \nonumber\\
&=\int_{\Sigma_x}\bra{\nabla\xi f + \gamma:\nabla^2\xi - N_\mu}\bra{d\bar\rho_{t,x}-d\bar\mu_x} - \int_{\Sigma_x} MG^{-1}\nabla\xi\bra{\nabla\log\frac{\rho_t}{\mu}}d\bar\rho_{t,x} + \Gamma\nabla_x\log\frac{\hat\rho_t}{\hat\mu}.\label{NLT-RE-eq:Aux-1}
\end{align}
Using the explicit definition of the marginal measure~\eqref{NL-def:mar-meas} alongwith~\eqref{res:level-set-der} we calculate 
\begin{align*}
\nabla_x\hat\rho_t =\int_{\Sigma_x}\bra{G^{-1}\nabla\xi\nabla\log\rho_t+ \nabla\cdot(G^{-1}\nabla\xi)}\rho_t\frac{d\Haus^{d-k}}{\Jac\xi}.
\end{align*}
Similarly calculating $\nabla_x\hat\mu$ and substituting into the final term in~\eqref{NLT-RE-eq:Aux-1} we find
\begin{align*}
\Gamma\nabla_x\log\frac{\hat\rho_t}{\hat\mu} &= \Gamma\bra{\frac{1}{\hat\rho_t}\nabla_x\hat\rho_t-\frac{1}{\hat\mu}\nabla_x\hat\mu} \\
&= \int_{\Sigma_x} \Gamma\bra{G^{-1}\nabla\xi\nabla\log\mu+\nabla\cdot(G^{-1}\nabla\xi)}(d\bar\rho_{t,x}-d\bar\mu_x) + \int_{\Sigma_x}\Gamma G^{-1}\nabla\xi \bra{\nabla\log\frac{\rho_t}{\mu}}d\bar\rho_{t,x},
\end{align*}
where the final equality follows by adding and subtracting $\int_{\Sigma_x}\Gamma G^{-1}\nabla\xi(\nabla\log\mu)\bar\rho_{t,x}$. With these calculations we can rewrite $h_t$ as
\begin{align}
h_t
=\int_{\Sigma_x}\mathcal F(d\bar\rho_{t,x}-d\bar\mu_x) -\int_{\Sigma_x}\bra{M-\Gamma}G^{-1}\nabla\xi \bra{\nabla\log\frac{\rho_t}{\mu}}d\bar\rho_{t,x},
\label{NLT-eq:RE-h-rewrite}
\end{align}
with $\mathcal F:=\nabla\xi f + \gamma:\nabla^2\xi - G^{-1}\nabla\xi\nabla\cdot M
- (M-\Gamma)\pra{\nabla\cdot(G^{-1}\nabla\xi)+G^{-1}\nabla\xi\nabla\log\mu}$.

\emph{Step 2.} We now estimate $|h_t|_{\Gamma^{-1}}^2=|\Gamma^{-1/2}h_t|^2$, by estimating each term in~\eqref{NLT-eq:RE-h-rewrite}. Repeating the coupling argument as in the proof of Theorem~\ref{NL-thm:RE} along with~\ref{NET-ass:relent-kappa} we find
\begin{align*}
\abs{\int_{\Sigma_x}\mathcal F(d\bar\rho_{t,x}-d\bar\mu_x)}^2_{\Gamma^{-1}}
\leq 
\frac{\kappa^2_{\RelEnt}}{\alpha_{\TI}\alpha_{\LSI}}\int_{\Sigma_x}\abs{\nabla_{\Sigma_x}\log\frac{\rho_t}{\mu}}^2d\bar\rho_{t,x}.
\end{align*}

Using Assumption~\ref{NLT-ass:relent-lambda}, for the second term we find 
\begin{align*}
\abs{\Gamma^{-1/2}\int_{\Sigma_x}\bra{M-\Gamma}G^{-1}\nabla\xi \bra{\nabla\log\frac{\rho_t}{\mu}}d\bar\rho_{t,x}}^2 \leq \lambda^2_{\RelEnt}\int_{\Sigma_x}\abs{(\nabla\xi\nabla\xi^T)^{-1/2}\nabla\xi\bra{\nabla\log\frac{\rho_t}{\mu}}}^2d\bar\rho_{t,x}.
\end{align*}

Combining these bounds and using the Young's inequality, for any $\tau>0$ we find
\begin{align}
\begin{split}\label{NL-eq:RE-aux-2}
|h_t|^2_{\Gamma^{-1}} &\leq \lambda^2_{\RelEnt}(1+\tau)\int_{\Sigma_x}\abs{(\nabla\xi\nabla\xi^T)^{-1/2}\nabla\xi\bra{\nabla\log\frac{\rho_t}{\mu}}}^2d\bar\rho_{t,x} \\
&\quad+  \frac{\kappa^2_{\RelEnt}}{\alpha_{\TI}\alpha_{\LSI}}\bra{1+\frac{1}{\tau}}\int_{\Sigma_x}\abs{\nabla_{\Sigma_x}\log\frac{\rho_t}{\mu}}^2d\bar\rho_{t,x}.
\end{split}
\end{align}

\emph{Step 3.} We now substitute $|h_t|_{\Gamma^{-1}}^2$ back into~\eqref{NLT-eq:fir-thm} and complete the proof. Using~\eqref{NL-eq:RE-aux-2} and the disintegration theorem it follows that 
\begin{align}
\begin{split}\label{NLT-eq:RE-aux-3}
\int_0^T \int_{\R^k} |h_t|_{\Gamma^{-1}}^2 \, d\hat\rho_t \, dt &\leq \lambda^2_{\RelEnt}(1+\tau)\int_0^T\int_{\R^n}\abs{(\nabla\xi\nabla\xi^T)^{-1/2}\nabla\xi\bra{\nabla\log\frac{\rho_t}{\mu}}}^2d\rho_{t}dt \\
&\quad+  \frac{\kappa^2_{\RelEnt}}{\alpha_{\TI}\alpha_{\LSI}}\bra{1+\frac{1}{\tau}}\int_0^T\int_{\R^n}\abs{\nabla_{\Sigma_x}\log\frac{\rho_t}{\mu}}^2d\rho_{t}dt.
\end{split}
\end{align}
By definition $\nabla=\nabla_{\Sigma_x} + \nabla\xi G^{-1}\nabla\xi^T\nabla$ and since for any $z\in\R^n$
\begin{align*}
|\nabla\xi^TG^{-1}\nabla\xi z|^2=z^T\nabla\xi^TG^{-1}\nabla\xi z = |(\nabla\xi\nabla\xi^T)^{-1/2}\nabla\xi z|^2,
\end{align*}
it follows that the integral terms in~\eqref{NLT-eq:RE-aux-3} satisfy 
\begin{align*}
\int_{\R^n}\abs{(\nabla\xi\nabla\xi^T)^{-1/2}\nabla\xi\bra{\nabla\log\frac{\rho_t}{\mu}}}^2d\bar\rho_{t,x} + \int_{\R^n}\abs{\nabla_{\Sigma_x}\log\frac{\rho_t}{\mu}}^2d\bar\rho_{t,x} = \int_{\R^n} \abs{\nabla\log\frac{\rho_t}{\mu}}^2\rho_t.
\end{align*}
Therefore choosing $\tau=\frac{\kappa^2_{\RelEnt}}{\alpha_{\TI}\alpha_{\LSI}\lambda^2_{\RelEnt}}$, the pre-factors in the two integrals in~\eqref{NLT-eq:RE-aux-3} become equal and we arrive at 
\begin{align}
\int_0^T \int_{\R^k} |h_t|_{\Gamma^{-1}}^2 \, d\hat\rho_t \, dt &\leq 
\bra{\lambda^2_{\RelEnt} + \frac{\kappa^2_{\RelEnt}}{\alpha_{\TI}\alpha_{\LSI}}}\int_0^T\int_{\R^n} \abs{\nabla\log\frac{\rho_t}{\mu}}^2\rho_tdt \nonumber\\
&\leq \frac{1}{\lambda_{\min}(\gamma)}\bra{\lambda^2_{\RelEnt} + \frac{\kappa^2_{\RelEnt}}{\alpha_{\TI}\alpha_{\LSI}}}\int_0^T\int_{\R^n} \abs{\nabla\log\frac{\rho_t}{\mu}}^2_{\gamma}\rho_tdt,\label{NLT-eq:RE-aux-4}
\end{align}
where $\lambda_{\min}(\gamma)$ is defined in~\eqref{def:min-lam}. Using~\eqref{eq:EDI}, for any $t>0$ we find
\begin{align*}
\RelEnt(\hat\rho_T|\eta_T) \leq \RelEnt(\nu_0|\eta_0) + \frac{1}{\lambda_{\min}(\gamma)}\bra{\lambda^2_{\RelEnt} + \frac{\kappa^2_{\RelEnt}}{\alpha_{\TI}\alpha_{\LSI}}}\pra{\RelEnt(\rho_0|\mu)-\RelEnt(\rho_t|\mu)},
\end{align*}
which is the required result. 
\end{proof}

\section{Proof of auxiliary results} \label{app:proof:numex}
The following lemma summarises the assumptions under which the $\vep$-dependent effective dynamics~\eqref{eq:eps-EffGen} has Lipschitz coefficients.
Analogous estimates can also be derived similarly in the general ($\vep$-independent) setting discussed in Section~\ref{sec:LinCG},~\ref{sec:NonLinCG}.    
\begin{lemma}\label{lem:eff-coef-Lip}
Assume that the coefficients $|\nabla f_1|\,,|\nabla\gamma_1|\in L^\infty(\R^n)$ and log of the invariant measure satisfies $|\nabla^2_{xy}\log\mu^\vep|\in L^\infty(\R^n)$. Then we have the bounds
\begin{align*}
\||\nabla_x F^\vep|\|_{L^\infty(\R^n)} &\leq \||\nabla_x f_1|\|_{L^\infty(\R^n)} +\frac{1}{\alpha^\vep_{\LSI}} \| | \nabla_y f_1| \|_{L^\infty(\R^n)} \| | \nabla^2_{xy} \log\mu^\vep| \|_{L^\infty(\R^n)},\\
\| |\nabla_x \Gamma^\vep|_F\|  &\leq \||\nabla_x \gamma_1|\|_{L^\infty(\R^n)} +\frac{1}{\alpha^\vep_{\LSI}} \| | \nabla_y \gamma_1| \|_{L^\infty(\R^n)} \| | \nabla^2_{xy} \log\mu^\vep| \|_{L^\infty(\R^n)}.
\end{align*}
\end{lemma}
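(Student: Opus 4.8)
The plan is to differentiate the effective coefficients directly under the integral sign, exploiting the explicit Gibbs structure of the conditional invariant measure, and then to recognise the ``hard'' part of the derivative as a covariance that is controlled by the Poincar\'e inequality implied by the assumed Log-Sobolev inequality. First I would write $F^\vep(x)=-\int_{\R^{n_y}} f_1(x,y)\,\bar\mu^\vep_x(y)\,dy$ and differentiate in $x$, picking up two contributions: one from $\nabla_x f_1$ and one from $\nabla_x\bar\mu^\vep_x$. Using $\bar\mu^\vep_x=\mu^\vep/\hat\mu^\vep$ one has $\nabla_x\log\bar\mu^\vep_x=\nabla_x\log\mu^\vep-\nabla_x\log\hat\mu^\vep$, and the normalisation identity $\nabla_x\log\hat\mu^\vep(x)=\E_{\bar\mu^\vep_x}[\nabla_x\log\mu^\vep]$ (obtained by differentiating $\int\bar\mu^\vep_x\,dy=1$). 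Substituting these, the second contribution collapses into a covariance, giving the key identity
\[
\nabla_x F^\vep(x)=-\E_{\bar\mu^\vep_x}[\nabla_x f_1]-\mathrm{Cov}_{\bar\mu^\vep_x}\!\big(f_1,\nabla_x\log\mu^\vep\big).
\]
The first term is immediately bounded by $\||\nabla_x f_1|\|_{L^\infty(\R^n)}$, matching the first summand in the claim.

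The main point is then to bound the covariance by $(\alpha^\vep_{\LSI})^{-1}\||\nabla_y f_1|\|_{L^\infty(\R^n)}\,\||\nabla^2_{xy}\log\mu^\vep|\|_{L^\infty(\R^n)}$, crucially with only a single power of $\alpha^\vep_{\LSI}$. Here I would use that, since the conditional invariant measure $\bar\mu^\vep_x$ satisfies the Log-Sobolev inequality with constant $\alpha^\vep_{\LSI}$, it also satisfies the Poincar\'e inequality $\mathrm{Var}_{\bar\mu^\vep_x}(\phi)\le (\alpha^\vep_{\LSI})^{-1}\E_{\bar\mu^\vep_x}|\nabla_y\phi|^2$ with the same constant, with the fast gradient $\nabla_y$. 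Testing the matrix-valued covariance against unit vectors $u,v\in\R^{n_x}$ reduces it to scalars via $u^\top\mathrm{Cov}_{\bar\mu^\vep_x}(f_1,\nabla_x\log\mu^\vep)\,v=\mathrm{Cov}_{\bar\mu^\vep_x}(u\cdot f_1,\,v\cdot\nabla_x\log\mu^\vep)$, and the Cauchy--Schwarz inequality for covariances together with Poincar\'e applied to each factor yields exactly one factor of $(\alpha^\vep_{\LSI})^{-1}$, since $\sqrt{\mathrm{Var}(a)}\,\sqrt{\mathrm{Var}(b)}\le (\alpha^\vep_{\LSI})^{-1}\sqrt{\E|\nabla_y a|^2}\sqrt{\E|\nabla_y b|^2}$. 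Noting $\nabla_y(\nabla_x\log\mu^\vep)=\nabla^2_{xy}\log\mu^\vep$ and bounding the Dirichlet energies by the $L^\infty$ norms, taking the supremum over $u,v$ gives the operator-norm bound for $\nabla_x F^\vep$.

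The estimate for $\Gamma^\vep$ is identical after replacing $f_1$ by $\gamma_1$; the only change is bookkeeping, since $\mathrm{Cov}_{\bar\mu^\vep_x}(\gamma_1,\nabla_x\log\mu^\vep)$ is now a three-tensor, which one measures in the (tensor) Frobenius norm used in the statement by pairing against a test matrix rather than a test vector. I expect the main obstacle to be purely technical rather than conceptual: justifying the differentiation under the integral sign (which relies on the implicit growth and integrability conditions already assumed for $\mu^\vep$ and its derivatives, cf.\ the remark after Assumption~\ref{ass:Coeff-Stat}), and keeping the vector, matrix and tensor norms mutually consistent so that the Cauchy--Schwarz step produces the operator norm on the left and the advertised products of $L^\infty$ norms on the right. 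The analytic heart -- the single power of $\alpha^\vep_{\LSI}$ -- comes essentially for free once the Log-Sobolev inequality is downgraded to Poincar\'e and the Cauchy--Schwarz estimate is split symmetrically between the two factors.
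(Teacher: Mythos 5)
Your proposal is correct and follows essentially the same route as the paper's proof: differentiate under the integral using the normalisation identity $\nabla_x\log\hat\mu^\vep(x)=\int_{\R^{n_y}}\nabla_x\log\mu^\vep(x,y)\,d\bar\mu^\vep_x(y)$, recognise the remaining term as the covariance $\mathrm{Cov}_{\bar\mu^\vep_x}(f_1,\nabla_x\log\mu^\vep)$, and control it by Cauchy--Schwarz plus the Poincar\'e inequality (implied by the Log-Sobolev assumption, with constants $\alpha^\vep_{\LSI}\leq\alpha^\vep_{\PI}$) applied to each variance factor, so that the two half-powers combine into the single factor $1/\alpha^\vep_{\LSI}$. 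The only cosmetic difference is that you make the vector/matrix/tensor bookkeeping explicit by testing against unit vectors, which the paper leaves implicit.
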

\begin{proof}
Using the explicit characterisation~\eqref{Lin-def:cond-meas} of the conditional invariant measure, we find
\begin{align*}
-\nabla_x F^\vep(x)&= \int_{\R^{n_y}} \bra{ \nabla_x f_1(x,y) + f_1(x,y)\nabla_x \log\mu^\vep(x,y) - f_1(x,y) \nabla_x\log\hat\mu^\vep(x)  } d\bar\mu_x^\vep(y)\\
& = \int_{\R^{n_y}}  \nabla_x f_1(x,y)d\bar\mu_x^\vep(y) + \int_{\R^{n_y}} f_1(x,y)\nabla_x \log\mu^\vep(x,y)d\bar\mu_x^\vep(y) - F^\vep(x)\nabla_x\log\hat\mu^\vep(x) \\
& = \int_{\R^{n_y}}  \nabla_x f_1(x,y)d\bar\mu_x^\vep(y) + \int_{\R^{n_y}} \bra{f_1(x,y) - F^\vep(x)}\nabla_x \log\mu^\vep(x,y)d\bar\mu_x^\vep(y) \\
& = \int_{\R^{n_y}}  \nabla_x f_1(x,y)d\bar\mu_x^\vep(y) + \int_{\R^{n_y}} \pra{f_1(x,y) - F^\vep(x)}\Bigl(\nabla_x \log\mu^\vep(x,y) - \int_{\R^{n_y}}\nabla_x \log\mu^\vep(x,y)d\bar\mu_x^\vep(y) \Bigr)d\bar\mu_x^\vep(y)\\
&\leq \||\nabla_x f_1|\|_{L^\infty(\R^n)} + \Bigl(\mathrm{Var}_{\bar\mu_x^\vep}(f_1) \, \mathrm{Var}_{\bar\mu_x^\vep} (\nabla_x\log\mu^\vep)\Bigr)^{1/2}\\
&\leq \||\nabla_x f_1|\|_{L^\infty(\R^n)} +\Bigl(\frac{1}{\alpha^\vep_{\LSI}}\int_{\R^{n_y}}|\nabla_y f_1(x,y)|^2d\bar\mu_x^\vep(y)\Bigr)^{1/2} \Bigl(\frac{1}{\alpha^\vep_{\LSI}}\int_{\R^{n_y}}|\nabla_y \nabla_x\log\mu^\vep(x,y)|^2d\bar\mu_x^\vep(y) \Bigr)^{1/2}\\
&\leq \||\nabla_x f_1|\|_{L^\infty(\R^n)} +\frac{1}{\alpha^\vep_{\LSI}} \| | \nabla_y f_1| \|_{L^\infty(\R^n)} \| | \nabla^2_{xy} \log\mu^\vep| \|_{L^\infty(\R^n)},  
\end{align*}
where the second equality follows since $\hat\mu=\hat\mu(x)$, and the third equality follows since 
\begin{align*}
\nabla_x\log\hat\mu^\vep(x) = \frac{1}{\hat\mu^\vep(x)}\int_{\R^{n_y}} \nabla_x \mu^\vep(x,y)dy = \int_{\R^{n_y}} \nabla_x\log \mu^\vep(x,y)d\bar\mu_x^\vep(y).
\end{align*}
The first inequality follows from the Cauchy-Schwarz inequality and the second inequality follows by using the Poincar{\'e} inequality with respect to the conditional invariant measure, which is implied by the Log-Sobolev assumption with respective constants $0\leq \alpha^\vep_{\LSI}\leq \alpha^\vep_{\PI}$ (see~\cite{otto2000generalization}). In conclusion
\begin{equation*}
\||\nabla_x F^\vep|\|_{L^\infty(\R^n)} \leq \||\nabla_x f_1|\|_{L^\infty(\R^n)} +\frac{1}{\alpha^\vep_{\LSI}} \| | \nabla_y f_1| \|_{L^\infty(\R^n)} \| | \nabla^2_{xy} \log\mu^\vep| \|_{L^\infty(\R^n)}.
\end{equation*}
A similar calculation yields the required estimate for $\Gamma^\vep$. 
\end{proof}

Recall from page~\pageref{eq:path-ent-exact}, that for path measures $\rho,\nu$ we have the identity 
\begin{equation}\label{eq:relent-path-def}
\RelEnt(\rho|\nu) 
=\RelEnt(\rho_0|\nu_0) + \frac12\int_0^T\E_{\rho_t}\bra{ |f_1(Z_t)-F(Z_t)|^2_{\gamma_1^{-1}}}dt.
\end{equation}
The integral term above (and thereby the relative entropy) can be explicitly calculated in the case of Ornstein-Uhlenbeck processes as we now show. 
\begin{lemma} \label{lem:num:pathspace}
Let $B,A \in \R^{n \times n}$, assume that $B$ is Hurwitz and $(B,A)$ is controllable. Consider the following linear SDEs in $\R^n$  
\begin{align*} dZ_t &= BZ_t \, dt + A \, dB_t\,, \qquad B= \left(\begin{smallmatrix} B_{11} & B_{12} \\ B_{21} & B_{22} \end{smallmatrix}\right), \\
d \bar Z_t &= \bar B \bar Z_t \, dt + A \, dB_t\,, \qquad \bar B= \left(\begin{smallmatrix} \bar B_{11} & 0 \\ \bar B_{21} & \bar B_{22} \end{smallmatrix}\right),
\end{align*}
where $\bar B_{11} = B_{11} + B_{12}\Sigma_{21}\Sigma_{11}^{-1}, \, \bar B_{21} = B_{21}, \, \bar B_{22} = B_{22}$ and $\Sigma$ is the unique solution to
\begin{equation*}
B \Sigma + \Sigma B^T = -AA^T.
\end{equation*}
Then with $\gamma_1 = A_{11}A_{11}^T$, we find
\begin{align*}\E_{\rho_t}( |f_1(Z_t)-F(Z_t)|^2_{\gamma_1^{-1}}) = &\Tr(\mathrm{Var}(Y_t)B_{12}^T \gamma_1^{-1}B_{12}) - 2 \Tr(Cov(X_t,Y_t)B_{12}^T \gamma_1^{-1}\Sigma_{21}\Sigma_{11}^{-1}) \\
&\quad + \Tr(\mathrm{Var}(X_t)\Sigma_{11}^{-1}\Sigma_{12}B_{12}^T \gamma_{1}^{-1}B_{12}\Sigma_{21}\Sigma_{11}^{-1}) +  |\E(Y_t) - \Sigma_{21}\Sigma_{11}^{-1} \E(X_t)|^2_{B_{12}^T \gamma_1^{-1} B_{12}}\,.
\end{align*}
\end{lemma}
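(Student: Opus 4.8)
The plan is to evaluate the integrand $\E_{\rho_t}\bra{|f_1(Z_t)-F(Z_t)|^2_{\gamma_1^{-1}}}$ that appears in the path-space relative-entropy identity~\eqref{eq:relent-path-def} by reducing it to a quadratic form in the first two moments of the Ornstein–Uhlenbeck process $Z_t=(X_t,Y_t)$. First I would identify the two drifts explicitly. In the linear setting the slow drift is $f_1(x,y)=B_{11}x+B_{12}y$, while the effective drift $F(x)=\int f_1(x,y)\,d\bar\mu_x(y)$ is the conditional average. Since the invariant measure is $\mu\sim\mathcal N(0,\Sigma)$, the Gaussian conditioning formula (as in~\eqref{eq:cond-inv-meas}) gives conditional mean $\Sigma_{21}\Sigma_{11}^{-1}x$, so that $F(x)=\bra{B_{11}+B_{12}\Sigma_{21}\Sigma_{11}^{-1}}x=\bar B_{11}x$. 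Subtracting, the crucial cancellation of the $B_{11}X_t$ terms yields
\begin{equation*}
f_1(Z_t)-F(Z_t)=B_{12}\bra{Y_t-\Sigma_{21}\Sigma_{11}^{-1}X_t}=:B_{12}W_t .
\end{equation*}

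Next I would turn the weighted square norm into a trace. Writing $Q:=B_{12}^T\gamma_1^{-1}B_{12}$ (a symmetric matrix), one has $|B_{12}W_t|^2_{\gamma_1^{-1}}=W_t^T Q W_t$, and taking expectation under $\rho_t=\law(Z_t)$ gives $\E_{\rho_t}\bra{|f_1-F|^2_{\gamma_1^{-1}}}=\Tr\bra{Q\,\E[W_tW_t^T]}$. I would then apply the bias–variance decomposition $\E[W_tW_t^T]=\mathrm{Var}(W_t)+\E[W_t]\,\E[W_t]^T$. The mean part contributes $|\E(Y_t)-\Sigma_{21}\Sigma_{11}^{-1}\E(X_t)|^2_{Q}$, which is the last term of the statement. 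For the covariance part I would expand $W_t=Y_t-M X_t$ (with $M:=\Sigma_{21}\Sigma_{11}^{-1}$) bilinearly,
\begin{equation*}
\mathrm{Var}(W_t)=\mathrm{Var}(Y_t)-\mathrm{Cov}(Y_t,X_t)M^T-M\,\mathrm{Cov}(X_t,Y_t)+M\,\mathrm{Var}(X_t)M^T,
\end{equation*}
apply $\Tr(Q\,\cdot\,)$, and use cyclicity and transpose-invariance of the trace together with the symmetry of $Q$ and of $\Sigma$. This shows the two cross terms coincide, so they combine with a factor $2$ and, together with the diagonal terms, reproduce the three trace terms of the claimed formula.

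Finally, I would note that every moment entering the result is explicit, which is exactly what makes the formula usable for the numerics behind Figure~\ref{fig:expathmeasures}. Since $Z_t$ is the Ornstein–Uhlenbeck process with drift matrix $B$, its mean is $\E Z_t=e^{tB}Z_0$ and its covariance solves the Lyapunov-type matrix ODE $\dot V=BV+VB^T+AA^T$; reading off the blocks $\mathrm{Var}(X_t),\mathrm{Var}(Y_t),\mathrm{Cov}(X_t,Y_t)$ completes the proof. I do not expect any genuine obstacle: the computation rests only on Gaussian conditioning and standard linear-SDE moment formulas. The single point requiring care is the matrix bookkeeping in the trace manipulation — correctly tracking transposes so that the two cross terms are identified and the symmetric weight $Q=B_{12}^T\gamma_1^{-1}B_{12}$ is carried consistently through the cyclic permutations.
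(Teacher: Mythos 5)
Your proposal is correct and follows essentially the same route as the paper's proof: identify $f_1(Z_t)-F(Z_t)=B_{12}\bigl(Y_t-\Sigma_{21}\Sigma_{11}^{-1}X_t\bigr)$, center around the means, and expand the $\gamma_1^{-1}$-weighted quadratic form into trace terms plus a squared-mean term; your bias--variance packaging via $\Tr\bigl(Q\,\E[W_tW_t^T]\bigr)$ with $Q=B_{12}^T\gamma_1^{-1}B_{12}$ is just a tidier ordering of the same computation. One remark: the cross term you derive, $-2\Tr\bigl(\mathrm{Cov}(X_t,Y_t)B_{12}^T\gamma_1^{-1}B_{12}\Sigma_{21}\Sigma_{11}^{-1}\bigr)$, agrees with the paper's own proof, whereas the lemma statement as printed omits the second factor $B_{12}$ after $\gamma_1^{-1}$ --- a typo in the statement, not a gap in your argument.
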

 \begin{proof}
Note that  $f_1(Z_t) = B_{11}X_t + B_{12}Y_t  $ and $F(Z_t) = (B_{11} + B_{12}\Sigma_{21}\Sigma_{11}^{-1})X_t$ and thus
\begin{align*}
 \E_{\rho_t}( |f_1(Z_t)-F(Z_t)|^2_{\gamma_1^{-1}}) &=  \E_{\rho_t}\left[ |(B_{12}Y_t - B_{12}\Sigma_{21}\Sigma_{11}^{-1}X_t)|_{ \gamma_1^{-1}}^2 \right] \\
 & =  \E_{\rho_t}\left[ |(Y_t - m_t^y + m_t^y  - \Sigma_{21}\Sigma_{11}^{-1}(X_t - m_t^x + m_t^x))|_{B_{12}^T\gamma_1^{-1} B_{12}}^2 \right] \\
 &= \E_{\rho_t}\left[(Y_t - m_t^y)^T B_{12}^T \gamma_1^{-1} B_{12}(Y_t - m_t^y) \right]  \\
 & \quad + \E_{\rho_t}\left[(X_t - m_t^x)^T \Sigma_{11}^{-1} \Sigma_{12} B_{12}^T \gamma_1^{-1} B_{12} \Sigma_{21} \Sigma_{11}^{-1} (X_t - m_t^x)\right]  \\
 & \quad -2 \E_{\rho_t}\left[(X_t - m_t^x)^T \Sigma_{11}^{-1} \Sigma_{12} B_{12}^T \gamma_1^{-1} B_{12} (Y_t - m_t^y) \right]\\
 & \quad + (m_t^y - \Sigma_{21}\Sigma_{11}^{-1} m_t^x)^T  B_{12}^T \gamma_1^{-1} B_{12} (m_t^y - \Sigma_{21}\Sigma_{11}^{-1} m_t^x) \\
 &= \Tr(\mathrm{Var}(Y_t)B_{12}^T \gamma_1^{-1}B_{12}) - 2 \Tr(Cov(X_t,Y_t)B_{12}^T \gamma_1^{-1}B_{12}\Sigma_{21}\Sigma_{11}^{-1}) \\
&\quad + \Tr(\mathrm{Var}(X_t)\Sigma_{11}^{-1}\Sigma_{12}B_{12}^T \gamma_{1}^{-1}B_{12}\Sigma_{21}\Sigma_{11}^{-1}) 
+  |m_t^y - \Sigma_{21}\Sigma_{11}^{-1} m_t^x|^2_{B_{12}^T \gamma_1^{-1} B_{12}}.
\end{align*}
\end{proof}
\end{appendices}

\bibliographystyle{abbrv}
\bibliography{draft_bib}

\vspace{0.5cm}

\noindent(C. Hartmann) Institut f\"ur Mathematik, Brandenburgische Technische Universit\"at Cottbus-Senftenberg,  Konrad-Wachsmann-Allee 1, D-03046 Cottbus,
Germany\\
\noindent E-mail address: \href{mailto:carsten.hartmann@b-tu.de}{carsten.hartmann@b-tu.de}

\noindent(L. Neureither) Institut f\"ur Mathematik, Brandenburgische Technische Universit\"at Cottbus-Senftenberg,  Konrad-Wachsmann-Allee 1, D-03046 Cottbus,
Germany\\
\noindent E-mail address: \href{mailto:neurelar@b-tu.de}{neurelar@b-tu.de}

\noindent(U. Sharma) Fachbereich Mathematik und Informatik, Freie Universit\"at Berlin, Arnimallee 9, 14195 Berlin, Germany\\
\noindent E-mail address: \href{mailto:upanshu.sharma@fu-berlin.de}{upanshu.sharma@fu-berlin.de}

\end{document}